\documentclass[10pt]{article}
\usepackage[left=3cm, right=3cm]{geometry}
\usepackage[english]{babel}
\usepackage{amsmath}
\usepackage{amsfonts}
\usepackage{amsthm}
\usepackage{amssymb}
\usepackage{algorithm}
\usepackage{algorithmic}
\usepackage[font=small,labelfont=bf]{caption}

\usepackage{xspace}
\usepackage{euscript}
\usepackage{graphicx}
\usepackage{caption}
\usepackage{subcaption}
\usepackage{amscd}
\usepackage{tabularx}
\usepackage{lipsum}
\usepackage{array}
\usepackage{mwe}
\usepackage{graphbox}
\usepackage{calc}

\usepackage{enumerate}
\usepackage{epsfig} 
\usepackage{graphics} 
%
%
%

%
\theoremstyle{plain}
\newtheorem{theo}{Theorem}[section]
\newtheorem{lem}[theo]{Lemma}
\newtheorem{prop}[theo]{Proposition}
%

%
%
\theoremstyle{definition}
\newtheorem{definition}[theo]{Definition}
%
%
\theoremstyle{remark}
%
%

%
%
\numberwithin{equation}{section}

\usepackage{blindtext}
\title{Scattering Coefficients of Inhomogeneous Objects and Their Application in Target Classification in Wave Imaging\thanks{\footnotesize  
This work was supported by the SNF grant 200021-172483.}}
\author{Lorenzo Baldassari\thanks{Department of Mathematics, ETH Z\"urich, R\"amistrasse 101, CH-8093 Z\"urich, Switzerland (\texttt{lorenzo.baldassari@sam.math.ethz.ch}).}}
\date{}
\begin{document}
\setcounter{section}{0}
\setcounter{secnumdepth}{2}

\maketitle

\begin{abstract}
The aim of this paper is to provide and numerically test in the presence of measurement noise a procedure for target classification in wave imaging based on comparing frequency-dependent distribution descriptors with  precomputed  ones in a dictionary of learned distributions. Distribution descriptors for inhomogeneous objects are obtained from the scattering coefficients. First, we extract the scattering coefficients of the (inhomogeneous) target from the perturbation of the echoes. Then, for a collection of inhomogeneous targets, we build a frequency-dependent dictionary of distribution descriptors and use a matching algorithm in order to identify a target from the dictionary up to some translation, rotation and scaling.

\medskip
	
\noindent \textbf{Keywords:} Helmholtz equation, Scattering coefficients, Inhomogeneous objects, Asymptotic expansion, Neumann-to-Dirichlet map, Target classification
	
\medskip
	
\noindent\textbf{Mathematics Subject Classification (2010)}: 35B20, 35B30, 35C20, 35R30
	
\end{abstract}

\section{Introduction}
There are several geometric and physical quantities associated with target classification such as eigenvalues and capacities \cite{Isoperimetric}. The concept of the scattering coefficients is one of them. The notion appears naturally when we describe the perturbation of sounds emitted by animals such bats and dolphins due to the presence of targets whose material parameters (permeability and permettivity) are different from the ones of the background \cite{echolocation, Bat perception, Bat echolocation}. 

To mathematically introduce the concept of the scattering coefficients, we consider the Helmholtz problem in $\mathbb{R}^2$ for a given fixed frequency $\omega >0$:
\begin{equation}
\begin{cases} \nabla \cdot (\chi (\mathbb{R}^2 \setminus \bar{B}) + \frac{1}{\sigma} \chi (B)) \nabla u + \omega^2 ( \chi (\mathbb{R}^2 \setminus \bar{B}) + \mu \chi (B)) u = 0 & \mbox{ in } \mathbb{R}^2, \\ \left |\frac{\partial (u - U)}{\partial|x|}-i\omega(u-U) \right | \leq \frac{K}{|x|^{\frac{3}{2}}} & \mbox{ if } |x| \to \infty.
\end{cases}
\label{helm-introd}
\end{equation}
Here, $K$ is a positive constant, $B$ is the target embedded in $\mathbb{R}^2$ with Lipschitz boundary, $\chi (B)$ (resp. $\chi (\mathbb{R}^2 \setminus \bar{B})$) is the characteristc function of $B$ (resp. $\mathbb{R}^2 \setminus \bar{B}$), the positive constants $\sigma$ and $\mu$ are the magnetic permeability and electric permettivity of the target, which are supposed to be different from the background permettivity and permeability (the constant function $1$), $U$ is the background solution, i.e., a given solution to $(\Delta + \omega^2)U=0$, and the solution $u$ to \eqref{helm-introd} represents the perturbed wave. The perturbation $u-U$ due to the presence of the permettivity and permeability target $B$ admits the following asymptotic expansion as $|x| \to \infty$, see \cite{echolocation}:
\begin{equation} 
u(x)-U(x)=-\frac{i}{4} \sum_{n \in \mathbb{Z}} H^{(1)}_n(\omega |x|) e^{in \theta_x} \sum_{m \in \mathbb{Z}} W_{n,m}[B, \sigma, \mu, \omega] a_m(U),
\label{asympt0}
\end{equation}
where $H^{(1)}_n$ are the Hankel functions of the first kind of order $n$ and $a_m(U)$ are constants such that $U(x)=\sum_{m\in \mathbb{Z}} a_m(U)J_m(\omega |x|) e^{im \theta_x}$ with $J_m$ being the Bessel function of order $m$. The building blocks $W_{n,m}[B, \sigma, \mu, \omega]$ for the asymptotic expansion \eqref{asympt0} are called the scattering coefficients. Note that the scattering coefficients $W_{n,m}[B, \sigma, \mu, \omega]$ can be reconstructed from the far-field measurements of $u$ by a least-squared method. A stability analysis of the reconstruction is provided in \cite{echolocation}.  

This paper extends the results of \cite{echolocation} to targets with inhomogeneous permettivities and permeabilities. The concept of inhomogeneous scattering coefficients were first introduced in \cite{heterogeneous} and used later in \cite{heterogeneous2} to prove resolution enhancement in high-contrast media. It is the purpose of this paper to extend the notion of scattering coefficients to objects with inhomogeneous permittivities and permeabilities and show their application in target classification. First, we prove important properties of the scattering coefficients such as translation, rotation and scaling formulas. Then, we construct distribution descriptors for multiple frequencies based on scaling, rotation, and translation properties of the scattering coefficients. Finally, we use a target identification algorithm in order to identify an inhomogeneous target from a dictionary of precomputed frequency-dependent distribution descriptors up to some translation, rotation and scaling. For the sake of simplicity, throughout this paper, we focus on two-dimensional models. However, the results can be easily extended to three dimensions. 

The paper is organized as follows. In Section \ref{sec2} we introduce the scattering coefficients for inhomogeneous targets and prove that they are the building blocks of the far-field expansion of the wave perturbation. Section \ref{sec3} is devoted to the derivation of integral representations of the inhomogeneous scattering coefficients. In Section \ref{sec4} we prove important properties for the inhomogeneous scattering coefficients, such as the exponential decay of the scattering coefficients. We also show translation, rotation and scaling property for the scattering coefficients. In Section \ref{sec5} we construct the translation- and rotation-invariant distribution descriptor. We also observe that the inhomogeneous scattering coefficients are nothing else but the Fourier coefficients of the far-field pattern. In Section \ref{sec6} we present numerical results in order to demonstrate the theoretical framework presented in previous sections. In particular, we investigate the identification of a target by the reconstruction of scattering coefficients from the measurements of the multistatic response matrix. A few concluding remarks are given in Section \ref{sec7}.  In Appendix \ref{secappendix}, we provide integral representations for the case of piecewise constant (inhomogeneous) material parameters. In Appendix \ref{secappendix2}, we present results of target identification using a full-view setting with no noise ($\sigma=0\%$).  

\section{Scattering coefficients and asymptotic expansions} \label{sec2}
Let $\frac{1}{\sigma}$ be a  bounded measurable function in $\mathbb{R}^2$ such that $\frac{1}{\sigma} - 1$ is compactly supported and 
$$0<\lambda_1 \leq \frac{1}{\sigma} \leq \lambda_2,$$
where $\lambda_1, \lambda_2$ are constants. Let $\mu$ be a  bounded measurable function in $\mathbb{R}^2$ such that $\mu - 1$ is compactly supported and 
$$0<\lambda_3 \leq \mu \leq \lambda_4,$$
where $\lambda_3, \lambda_4$ are constants. For a given fixed frequency $\omega>0$, we consider the following Helmholtz problem:
\begin{equation}
\begin{cases} \nabla \cdot \frac{1}{\sigma} \nabla u + \omega^2 \mu u = 0 & \mbox{ in } \mathbb{R}^2, \\ \left |\frac{\partial (u - U)}{\partial |x|}-i\omega(u-U) \right | \leq \frac{K}{|x|^{\frac{3}{2}}} & \mbox{ if } |x| \to \infty,
\end{cases}
\label{helm}
\end{equation}
where $U$ is a given solution to
\begin{equation}
\Delta U + \omega^2 U = 0,
\label{background}
\end{equation}
and $| \cdot |$ denotes the Euclidean norm of $\mathbb{R}^2$. In this section, we derive a full far-field expansion of $(u-U)(x)$ as $|x| \to \infty$. In the course of doing so, the notion of inhomogeneous scattering coefficients appears naturally.

Let $B$ a bounded domain in $\mathbb{R}^2$ with Lipschitz boundary. We assume that $B$ is such that
$$\mbox{supp}\left(\frac{1}{\sigma} - 1\right) \subset B,$$
$$\mbox{supp}(\mu - 1) \subset B.$$
Suppose that $B$ contains the origin. Note that \eqref{helm} is equivalent to
\begin{equation}
\begin{cases} \nabla \cdot \frac{1}{\sigma} \nabla u + \omega^2 \mu u = 0 & \mbox{ in } B, \\ \Delta u + \omega^2 u = 0 & \mbox{ in } \mathbb{R}^2 \setminus \bar{B}, \\ u|_{+} - u|_{-} = 0 & \mbox{ on } \partial B, \\ \nu \cdot \nabla u |_{+} - \nu \cdot \frac{1}{\sigma} \nabla u |_{-}=0 & \mbox{ on } \partial B, \\ \left |\frac{\partial (u - U)}{\partial |x|}-i \omega (u-U) \right | \leq \frac{K}{|x|^{\frac{3}{2}}} & \mbox{ if } |x| \to \infty,
\end{cases}
\label{transm}
\end{equation}
where $\nu$ is the outward normal vector at some $x \in \partial B$ and the subscripts $\pm$ indicate the limits from outside and inside $B$, respectively. 

In two dimensions, the fundamental solution $\Gamma_\omega(x)$ to the the Helmholtz equation 
$$(\Delta + \omega^2)\Gamma_\omega(x)=\delta_0(x)$$
subject to the Sommerfeld outgoing radiation condition is given by $$\Gamma_\omega(x)=-\frac{i}{4}H^{(1)}_0(\omega|x|).$$

Assume that $\omega^2$ is not a Neumann eigenvalue of $-\frac{1}{\mu(x)}\nabla \cdot \frac{1}{\sigma(x)} \nabla$ on $B$. Let $N_{\sigma,\mu}$ be the Neumann function of problem 
\begin{equation}
\begin{cases} \nabla \cdot \frac{1}{\sigma} \nabla u + \omega^2 \mu u = 0 & x \in B, \\ \frac{1}{\sigma} \frac{\partial u}{\partial \nu} = g & x \in \partial B,
\end{cases}
\label{ntd}
\end{equation} 
that is, for each fixed $z \in B$, $N_{\sigma,\mu}(z, \cdot)$ is solution to 
\begin{equation}
\begin{cases} \nabla_y \cdot \frac{1}{\sigma(y)} \nabla_y N_{\sigma,\mu}(z,y) + \omega^2 \mu(y) N_{\sigma,\mu}(z,y) = -\delta_z(y) & y \in B, \\ \frac{1}{\sigma(y)} \nabla _yN_{\sigma,\mu}(z,y) \nu_y =0 & y \in \partial B.
\end{cases}
\label{neumann}
\end{equation}
We can prove the following result.

\begin{prop} 
The function defined by 
\begin{equation}
\mathcal{N}_{B,\sigma, \mu}[g](x):=\int_{\partial B} N_{\sigma, \mu}(x,y)g(y) \; dS_y, \; \; x \in B, 
\label{NtD}
\end{equation}
is the solution to \eqref{ntd}. Moreover, the Neumann-to-Dirichlet (NtD) map $\Lambda_{\sigma, \mu}: H^{-\frac{1}{2}}(\partial B) \to H^{\frac{1}{2}}(\partial B)$ is well-defined, invertible and 
$$\Lambda_{\sigma, \mu}[g](x)=\left. \mathcal{N}_{B,\sigma,\mu}[g]\right|_{\partial B}(x)=u|_{\partial B}(x) \mbox{ for } x \in \partial B.$$
\end{prop}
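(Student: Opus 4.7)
The plan is to establish the pointwise representation $u(x) = \mathcal{N}_{B,\sigma,\mu}[g](x)$ for the unique solution $u$ of \eqref{ntd}, from which all three claims of the proposition follow at once. The hypothesis that $\omega^2$ is not a Neumann eigenvalue of $-(1/\mu)\nabla\cdot(1/\sigma)\nabla$ on $B$, combined with the Fredholm alternative applied to the variational formulation of \eqref{ntd}, gives a unique $u \in H^1(B)$ for each $g \in H^{-1/2}(\partial B)$, together with the a priori estimate $\|u\|_{H^1(B)} \lesssim \|g\|_{H^{-1/2}(\partial B)}$.

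Next, for fixed $x \in B$ I would apply Green's second identity for the formally symmetric operator $\mathcal{L} = \nabla\cdot(1/\sigma)\nabla + \omega^2\mu$ to the pair $u(y)$ and $N_{\sigma,\mu}(x,y)$ on $B \setminus \overline{B_\varepsilon(x)}$ and then pass to the limit $\varepsilon \to 0$, obtaining
\[
\int_B \bigl[u(y)\mathcal{L}_y N_{\sigma,\mu}(x,y) - N_{\sigma,\mu}(x,y)\mathcal{L}_y u(y)\bigr]\,dy = \int_{\partial B} \tfrac{1}{\sigma(y)}\bigl[u(y)\partial_{\nu_y} N_{\sigma,\mu}(x,y) - N_{\sigma,\mu}(x,y)\partial_{\nu_y} u(y)\bigr]\,dS_y.
\]
Inserting the defining properties of $u$ and $N_{\sigma,\mu}$, namely $\mathcal{L}u = 0$ in $B$, $\mathcal{L}_y N_{\sigma,\mu}(x,\cdot) = -\delta_x$ in $B$, $(1/\sigma)\partial_{\nu_y} N_{\sigma,\mu}(x,\cdot)|_{\partial B} = 0$, and $(1/\sigma)\partial_\nu u|_{\partial B} = g$, the identity collapses to $u(x) = \int_{\partial B} N_{\sigma,\mu}(x,y) g(y)\,dS_y = \mathcal{N}_{B,\sigma,\mu}[g](x)$. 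Hence $\mathcal{N}_{B,\sigma,\mu}[g]$ is the unique solution of \eqref{ntd}, and taking its boundary trace yields $\Lambda_{\sigma,\mu}[g] = u|_{\partial B}$; combined with the trace theorem and the a priori estimate this gives the asserted boundedness $H^{-1/2}(\partial B) \to H^{1/2}(\partial B)$. For invertibility I would construct the inverse as the Dirichlet-to-Neumann map: given $f \in H^{1/2}(\partial B)$, solve the Dirichlet problem $\mathcal{L}w = 0$ in $B$, $w = f$ on $\partial B$ (well posed under the implicit additional assumption that $\omega^2$ is also not a Dirichlet eigenvalue) and set $g = (1/\sigma)\partial_\nu w|_{\partial B} \in H^{-1/2}(\partial B)$; the representation immediately verifies that this is a two-sided inverse.

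The main obstacle is to justify the Green's identity rigorously given the low regularity of the data $g \in H^{-1/2}(\partial B)$ and the singular behaviour of $N_{\sigma,\mu}(x,\cdot)$ at $y = x$. The local structure of $N_{\sigma,\mu}$ is only that of the fundamental solution of the divergence-form principal part with bounded measurable coefficients, so the limit $\varepsilon \to 0$ on $\partial B_\varepsilon(x)$ must be controlled by a careful comparison with a model singularity, and the boundary terms must be interpreted through the $H^{-1/2}$--$H^{1/2}$ duality rather than as classical surface integrals. A cleaner fallback, which I would use if the direct calculation becomes unwieldy, is to first establish the representation for smooth $g$, where every quantity is classical, and then extend to $g \in H^{-1/2}(\partial B)$ by density using the continuity of $g \mapsto u$ and of $g \mapsto \mathcal{N}_{B,\sigma,\mu}[g]$.
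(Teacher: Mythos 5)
Your proposal is correct and follows essentially the same route as the paper: both derive the representation $u(x)=\int_{\partial B}N_{\sigma,\mu}(x,y)g(y)\,dS_y$ by applying Green's second identity to $u$ and $N_{\sigma,\mu}(x,\cdot)$ and invoking the defining properties of the Neumann function, and both deduce the NtD statement from this. You are merely more explicit than the paper about excising the singularity at $y=x$, the density/duality issues for $g\in H^{-1/2}(\partial B)$, and the construction of the inverse via the Dirichlet-to-Neumann map (including the implicit non-Dirichlet-eigenvalue assumption), all of which the paper glosses over.
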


\begin{proof}
By \eqref{ntd}, for each fixed $x \in B$, we have
$$\left(\nabla_y \cdot \frac{1}{\sigma(y)} \nabla_y u(y) + \omega^2 \mu(y) u(y) \right) N_{\sigma,\mu}(x,y)=0.$$
By integrating over $B$ and applying Green's formula, we have
\begin{equation*}
\begin{aligned}
& \int_{\partial B} g(y) N_{\sigma, \mu}(x,y) \; dS_y - \int_{\partial B} u(y) \frac{1}{\sigma(y)} \nabla_y N_{\sigma, \mu}(x,y) \cdot \nu_y \; dS_y \\ & + \int_B  u(y) \left(\nabla_y \cdot \frac{1}{\sigma(y)} \nabla_y N_{\sigma, \mu}(x,y) + \omega^2 \mu(y) N_{\sigma, \mu}(x,y) \right) \; dy = 0.
\end{aligned}
\end{equation*}
From \eqref{neumann}, it follows that
$$u(x)= \int_{\partial B} N_{\sigma, \mu}(x,y)g(y) \; dS_y.$$
The second part of the proposition follows from the fact that $\omega^2$ is not a Neumann eigenvalue of $-\Delta$ in $B$.
\end{proof}

We can prove the following proposition.

\begin{prop}
For each $x \in \mathbb{R}^2 \setminus \bar{B}$, if $u$ is the solution to \eqref{helm} and $U$ is such that \eqref{background} holds, then
\begin{equation}
\begin{aligned}
(u-U)(x) = \int_{\partial  B } g(y) \Gamma_\omega(x-y) \; dS_y - \int_{\partial B} \Lambda_{\sigma,\mu}[g](y) \left. \frac{\partial \Gamma_\omega}{\partial \nu_y}\right|_{+}(x-y) \; dS_y.
\end{aligned}
\label{asympt2}
\end{equation}
\end{prop}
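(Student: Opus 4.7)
The strategy is to use Green's representation formula twice: once in the unbounded region $\mathbb{R}^2 \setminus \bar{B}$ to express $u-U$ in terms of its Cauchy data on $\partial B$, and once inside $B$ to eliminate the residual terms that involve $U$ rather than $u$.

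First, I would apply Green's second identity on the annular region $B_R \setminus \bar{B}$, with $B_R$ a large ball containing $B$, to the pair $(u-U,\,\Gamma_\omega(x-\cdot))$ for a fixed $x \in \mathbb{R}^2 \setminus \bar{B}$. Both $u-U$ and $\Gamma_\omega(x-\cdot)$ solve the homogeneous Helmholtz equation in the exterior (away from $x$) and satisfy the Sommerfeld radiation condition, so the $\partial B_R$ boundary integral vanishes as $R\to\infty$ by the standard pairing argument. Using $(\Delta_y+\omega^2)\Gamma_\omega(x-y)=\delta_x(y)$ and taking the outward normal to the exterior domain on $\partial B$ to be $-\nu$, one gets
\begin{equation*}
(u-U)(x) = \int_{\partial B} \Gamma_\omega(x-y)\,\frac{\partial (u-U)}{\partial \nu_y}\bigg|_+(y)\,dS_y - \int_{\partial B} (u-U)(y)\,\frac{\partial \Gamma_\omega}{\partial \nu_y}\bigg|_+(x-y)\,dS_y.
\end{equation*}

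Second, I would convert the Cauchy data of $u$ into data written in terms of $g$ and $\Lambda_{\sigma,\mu}[g]$. Setting $g:=\tfrac{1}{\sigma}\tfrac{\partial u}{\partial \nu}\big|_-$, the transmission conditions in \eqref{transm} give $\tfrac{\partial u}{\partial \nu}\big|_+ = g$, while continuity of $u$ across $\partial B$ together with the preceding proposition yields $u|_+ = u|_- = \Lambda_{\sigma,\mu}[g]$. Substituting these identities produces the two terms stated in \eqref{asympt2} plus the residual
\begin{equation*}
R(x) := -\int_{\partial B} \Gamma_\omega(x-y)\,\frac{\partial U}{\partial \nu}(y)\,dS_y + \int_{\partial B} U(y)\,\frac{\partial \Gamma_\omega}{\partial \nu_y}\bigg|_+(x-y)\,dS_y.
\end{equation*}

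Finally, I would show that $R(x)=0$ by a second use of Green's second identity, this time inside $B$, applied to the pair $(U,\,\Gamma_\omega(x-\cdot))$. Since $x \in \mathbb{R}^2 \setminus \bar{B}$, the function $y\mapsto \Gamma_\omega(x-y)$ is smooth on $\bar{B}$ and satisfies $(\Delta+\omega^2)\Gamma_\omega(x-\cdot)=0$ there; because $U$ also solves the free Helmholtz equation in $B$, the bulk integral vanishes, and the identity directly yields $R(x)=0$. Combining the three steps gives \eqref{asympt2}. The only non-routine bookkeeping is tracking sign conventions for $\nu$ (outward to $B$, inward to the exterior domain) and for the $\pm$ limits on $\partial B$, together with the rigorous justification that the $\partial B_R$ term vanishes as $R\to\infty$ via the Sommerfeld condition; both are standard.
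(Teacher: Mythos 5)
Your proposal is correct and follows essentially the same route as the paper: Green's representation for $u-U$ in the exterior region with the Sommerfeld condition killing the far boundary term, followed by a second application of Green's identity inside $B$ (where $\Gamma_\omega(x-\cdot)$ is regular since $x\notin\bar B$) to eliminate the $U$-terms, and finally the transmission conditions and the NtD map to rewrite the Cauchy data of $u$ as $g$ and $\Lambda_{\sigma,\mu}[g]$. The only difference is cosmetic ordering — you substitute the transmission data first and then cancel the residual, while the paper cancels the $U$-contribution before substituting — and your explicit treatment of the vanishing $\partial B_R$ term is if anything slightly more careful than the paper's.
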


\begin{proof}
Observe that 
$$\Delta(u-U)(y) + \omega^2 (u-U)(y) =0 \mbox{ for } y \in \mathbb{R}^2 \setminus \bar{B}.$$
Hence, for each fixed $x \in \mathbb{R}^2 \setminus \bar{B}$,
$$\int_{\mathbb{R}^2 \setminus \bar{B}} \left[ \Delta(u-U)(y) + \omega^2 (u-U)(y) \right] \Gamma_\omega(x-y) \; dy = 0.$$
Let $R>0$ be such that $B \subset B_R$. By Green's formula, we have 
\begin{equation}
\begin{aligned}
& - \int_{B_R \setminus \bar{B}} \nabla_y (u-U)(y) \nabla_y \Gamma_\omega(x-y) \; dy + \int_{\partial (B_R \setminus \bar{B})} \frac{\partial (u-U)}{\partial \nu_y}(y) \Gamma_\omega(x-y) \; dS_y \\ & + \int_{B_R \setminus \bar{B}} \omega^2 (u-U)(y) \Gamma_\omega (x-y) \; dy= \\ & = \int_{\partial (B_R \setminus \bar{B})} \frac{\partial (u-U)}{\partial \nu_y}(y) \Gamma_\omega(x-y) \; dS_y - \int_{\partial (B_R \setminus \bar{B})} (u-U)(y) \frac{\partial \Gamma_\omega}{\partial \nu_y}(x-y) \; dS_y \\ & + \int_{B_R \setminus \bar{B}} (u-U)(y)[(\Delta + \omega^2)\Gamma_\omega(x-y)] \; dy = \\ & =0.
\end{aligned}
\end{equation}
Given $x \in \mathbb{R}^2 \setminus \bar{B}$, for $R$ large enough, \begin{equation*}
\begin{aligned}
\int_{B_R \setminus \bar{B}} (u-U)(y)[(\Delta + \omega^2)\Gamma_\omega(x-y)] \; dy & = \int_{B_R \setminus \bar{B}} (u-U)(y) \delta_0(x-y) \; dy = \\ & = (u-U)(x).
\end{aligned}
\end{equation*}
Then, we obtain
\begin{equation}
\begin{aligned}
(u-U)(x) =  & - \int_{\partial B} (u-U)(y) \left. \frac{\partial \Gamma_\omega}{\partial \nu_y}\right|_{+}(x-y) \; dS_y +  \int_{\partial  B } \frac{\partial (u-U)}{\partial \nu_y}(y) \Gamma_\omega(x-y) \; dS_y = \\ = & - \int_{\partial B} u(y) \left. \frac{\partial \Gamma_\omega}{\partial \nu_y}\right|_{+}(x-y) \; dS_y +  \int_{\partial  B } \frac{\partial u}{\partial \nu_y}(y) \Gamma_\omega(x-y) \; dS_y,
\end{aligned}
\label{asympt1}
\end{equation}
where the second equality holds from Green's formula and $\Delta U = - \omega^2 U$:
\begin{equation*}
\begin{aligned}
& \int_{\partial B} U(y) \left. \frac{\partial \Gamma_\omega}{\partial \nu_y}\right|_{+}(x-y) \; dS_y -  \int_{\partial  B } \frac{\partial U}{\partial \nu_y}(y) \Gamma_\omega(x-y) \; dS_y = \\ & = \int_B U(y) \Delta \Gamma_\omega (x-y) - \Delta U(y) \Gamma_\omega (x-y) \; dy = \\ & = \int_B U(y) (\delta_0(x-y) - \omega^2 \Gamma_\omega(x-y)) + \omega^2 U(y) \Gamma_\omega(x-y) \; dy = \\ & = \int_B U(y) \delta_0(x-y) \; dy = 0.
\end{aligned}
\end{equation*} 
Thus, from the transmission conditions, it follows that \eqref{asympt1} can be rewritten as
\begin{equation*}
\begin{aligned}
(u-U)(x) = - \int_{\partial B} \left. \Lambda_{\sigma,\mu}[g](y) \frac{\partial \Gamma_\omega}{\partial \nu_y}\right|_{+}(x-y) \; dS_y +  \int_{\partial  B } g(y) \Gamma_\omega(x-y) \; dS_y.
\end{aligned}
\end{equation*}
\end{proof}

For $x \in \mathbb{R}^2 \setminus \bar{B}$, we have
$$\Lambda_{1,1}\left(\frac{\partial \Gamma_\omega}{\partial \nu_y}(x- \cdot)\right)= \Gamma_\omega (x-\cdot) \mbox{ on } \partial B,$$
and hence 
\begin{equation} \int_{\partial B} \left. \frac{\partial \Gamma_\omega}{\partial \nu_y}\right|_{+}(x-y) \Lambda_{\sigma,\mu}[g](y)  \; dS_y = \int_{\partial B} \Gamma_\omega(x-y) \Lambda^{-1}_{1,1} \Lambda_{\sigma, \mu} [g](y) \; dS_y,
\label{asympt3}
\end{equation}
which is a consequence of the fact that $\Lambda_{1,1}$ is invertible and self-adjoint. The following result holds.

\begin{prop}
For each $u,v \in L^2(\partial B)$ solutions of \eqref{helm},
$$\left\langle \Lambda_{\sigma,\mu} \left[\frac{1}{\sigma} \frac{\partial u}{\partial \nu} \right], \frac{1}{\sigma} \frac{\partial v}{\partial \nu} \right\rangle_{L^2(\partial B)} = \left\langle \frac{1}{\sigma} \frac{\partial u}{\partial \nu}, \Lambda_{\sigma,\mu}\left[ \frac{1}{\sigma} \frac{\partial v}{\partial \nu}\right]\right\rangle_{L^2(\partial B)}.$$ 
\end{prop}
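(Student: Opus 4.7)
The plan is to reduce the asserted symmetry of $\Lambda_{\sigma,\mu}$ on $L^2(\partial B)$ to Green's second identity in $B$ for the symmetric divergence-form operator $\mathcal{L}:=\nabla\cdot\tfrac{1}{\sigma}\nabla$. Writing $g:=\tfrac{1}{\sigma}\tfrac{\partial u}{\partial\nu}\big|_{\partial B}$ and $h:=\tfrac{1}{\sigma}\tfrac{\partial v}{\partial\nu}\big|_{\partial B}$, the previous proposition yields $\Lambda_{\sigma,\mu}[g]=u|_{\partial B}$ and $\Lambda_{\sigma,\mu}[h]=v|_{\partial B}$, so the identity to establish becomes
$$
\int_{\partial B} u\,\overline{h}\, dS \;=\; \int_{\partial B} g\,\overline{v}\, dS.
$$
Since $\sigma$ and $\mu$ are real-valued, $\overline{v}$ satisfies the same interior equation $\mathcal{L}\overline{v}+\omega^{2}\mu\overline{v}=0$ in $B$ as $v$, and $\overline{h}=\tfrac{1}{\sigma}\tfrac{\partial\overline{v}}{\partial\nu}$ on $\partial B$, so the claim is entirely a statement about $u$ and $\overline{v}$ inside $B$.

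I would then apply Green's second identity to the pair $(u,\overline{v})$ on $B$, obtaining
$$
\int_{B}\bigl[u\,\mathcal{L}\overline{v}-\overline{v}\,\mathcal{L}u\bigr]\, dy \;=\; \int_{\partial B}\Bigl[u\,\tfrac{1}{\sigma}\tfrac{\partial\overline{v}}{\partial\nu}-\overline{v}\,\tfrac{1}{\sigma}\tfrac{\partial u}{\partial\nu}\Bigr]\, dS,
$$
and substitute $\mathcal{L}u=-\omega^{2}\mu u$ and $\mathcal{L}\overline{v}=-\omega^{2}\mu\overline{v}$. The bulk integrand becomes $-\omega^{2}\mu u\overline{v}+\omega^{2}\mu\overline{v}u=0$, so the boundary expression vanishes, which is exactly the desired symmetry.

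I do not expect any real obstacle. The assumption that $\omega^{2}$ is not a Neumann eigenvalue of $-\tfrac{1}{\mu}\mathcal{L}$, already in force, guarantees that $\Lambda_{\sigma,\mu}$ is well defined, and the hypothesis $g,h\in L^{2}(\partial B)$ ensures that the inner products in the statement coincide with honest boundary integrals. The only minor technical point is the justification of Green's formula for $H^{1}(B)$-solutions whose conormal traces lie only in $L^{2}(\partial B)$; this follows by a standard density argument (approximating the conormal data by smooth ones for which the Neumann problem for $\mathcal{L}+\omega^{2}\mu$ is classically solvable) or, equivalently, from the weak formulation of $\mathcal{L}$.
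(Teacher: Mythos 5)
Your proof is correct and follows essentially the same route as the paper: apply Green's second identity to $u$ and $\overline{v}$ and observe that the volume terms cancel because both satisfy the same (real-coefficient) equation, leaving exactly the boundary symmetry. In fact your argument is slightly more complete than the paper's, which only writes out the special case $\sigma=\mu=1$ ``for the sake of simplicity,'' whereas you carry out the identical computation for the general divergence-form operator $\nabla\cdot\tfrac{1}{\sigma}\nabla$ and correctly flag the density argument needed to justify Green's formula with $L^2(\partial B)$ conormal data.
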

\begin{proof}
For the sake of simplicity, let us prove the case in which $\sigma = \mu = 1$. By subtracting $\Delta u \bar{v}+ \omega^2 u \bar{v} =0$ to $\Delta \bar{v} u + \omega^2 \bar{v}u =0$, we get
$$\Delta u \bar{v}- \Delta \bar{v} u =0.$$
By Green's formula,
\begin{equation*} 
\begin{aligned} 
\left\langle \Lambda_{1,1} \left[\frac{\partial u}{\partial \nu} \right], \frac{\partial v}{\partial \nu} \right\rangle_{L^2(\partial B)} & = \int_{\partial B} u \frac{\partial \bar{v}}{\partial \nu} = \\ & = \int_B \Delta \bar v u - \Delta u \bar v + \int_{\partial B} \bar{v} \frac{\partial u}{\partial \nu} = \\ & = \left\langle \frac{\partial u}{\partial \nu}, \Lambda_{1,1}\left[\frac{\partial v}{\partial \nu}\right]\right\rangle_{L^2(\partial B)}.
\end{aligned}
\end{equation*}
\end{proof} 

By \eqref{asympt3}, for $x \in \mathbb{R}^2 \setminus \bar{B}$, formula \eqref{asympt2} becomes
\begin{equation} (u-U)(x)=\int_{\partial B} \Gamma _\omega(x-y) \Lambda_{1,1}^{-1} (\Lambda_{1,1} - \Lambda_{\sigma,\mu})[g](y)  \; dS_y.
\label{asympt4}
\end{equation}
 For $|x|>|y|$, by Graf's addition formula \cite{Graf formula}, 
$$\Gamma_\omega(x-y)= -\frac{i}{4} \sum_{n \in \mathbb{Z}} H^{(1)}_n(\omega |x|) e^{in\theta_x} J_n(\omega |y|) e^{-in\theta_y},$$
where $x=(|x|, \theta_x)$, $y=(|y|, \theta_y)$, $H^{(1)}_n$ is the Hankel function of the first kind of order $n$ and $J_n$ is the Bessel function of the first kind of order $n$. In the following, we use $\mathcal{C}_m$ to denote the cylindrical wave of index $m \in \mathbb{Z}$ and of wave number $\omega$, which is defined by
\begin{equation}
\mathcal{C}_m(x) = \mathcal{C}_{m,\omega} (y) := J_m(\omega |y|) e^{im\theta_y}.
\label{cylindrical}
\end{equation}
Hence, \eqref{asympt4} becomes:
\begin{equation} (u-U)(x)= -\frac{i}{4} \sum_{n \in \mathbb{Z}} H^{(1)}_n(\omega |x|) e^{in\theta_x} \int_{\partial B}  \overline{\mathcal{C}_{n}(y)} \Lambda_{1,1}^{-1} (\Lambda_{1,1} - \Lambda_{\sigma,\mu})[g](y)  \; dS_y.
\label{asympt5}
\end{equation}
For each $m \in \mathbb{Z}$, let $u_m$ be the solution to \eqref{helm} when  $\mathcal{C}_m$ is the source term. For $x \in \partial B$, let us define
$$g_m(x):=\left. \frac{1}{\sigma} \frac{\partial u_m}{\partial \nu}\right|_{-}(x).$$
Since the family of cylindrical waves $(\mathcal{C}_m(x))_m$ is complete \cite{math and stat method}, $U$ admits the following expansion:
$$U(x)= \sum_{m \in \mathbb{Z}} a_m(U) \mathcal{C}_m(x),$$
where $a_m(U)$ are constants. By linearity of \eqref{helm}, we get
$$g(x)=\frac{1}{\sigma} \left. \frac{\partial u}{\partial \nu}\right|_{-}(x) = \sum_{m \in \mathbb{Z}} a_m(U) g_m(x).$$
Then, for $|x| \to \infty$,
\begin{equation} 
\begin{aligned} 
(u-U)(x)= & -\frac{i}{4} \sum_{n,m \in \mathbb{Z}} H^{(1)}_n(\omega |x|) e^{in\theta_x} a_m(U) \int_{\partial B}  \overline{\mathcal{C}_n(y)}  \Lambda_{1,1}^{-1} (\Lambda_{1,1} - \Lambda_{\sigma,\mu})[g_m](y)  dS_y.
\label{asympt6}
\end{aligned}
\end{equation}

Now we can define the scattering coefficients associated with $\sigma$ and $\mu$.
\begin{definition}
We define the scattering coefficients associated with the inhomogeneous permittivity $\mu(x)$ and the permeability $\sigma(x)$ for a given fixed frequency $\omega>0$ as follows:
$$W_{n,m}[B, \sigma,\mu,\omega]= \int_{\partial B} \overline{\mathcal{C}_n(y)} \Lambda_{1,1}^{-1} (\Lambda_{1,1} - \Lambda_{\sigma,\mu})[g_m](y) \; dS_y.$$
\end{definition} 
Then, for $x \to \infty$, \eqref{asympt6} becomes
\begin{equation} 
\begin{aligned} 
(u-U)(x)= & -\frac{i}{4} \sum_{n,m \in \mathbb{Z}} H^{(1)}_n(\omega |x|) e^{in\theta_x} W_{n,m} [B, \sigma, \mu, \omega] a_m(U).
\label{asympt7}
\end{aligned}
\end{equation}
From \eqref{asympt7}, we get the following theorem.

\begin{theo}
Let $u$ be the solution to \eqref{helm}. If $U$ admits the following expansion:
$$U(x)= \sum_{m \in \mathbb{Z}} a_m(U) \mathcal{C}_n(x),$$
then we have
\begin{equation} 
\begin{aligned} 
(u-U)(x)= & -\frac{i}{4} \sum_{n,m \in \mathbb{Z}} H^{(1)}_n(\omega |x|) e^{in\theta_x} W_{n,m} [B, \sigma, \mu, \omega] a_m(U).
\label{theo-asympt7}
\end{aligned}
\end{equation}
which holds uniformly as $|x| \to \infty$.

\end{theo}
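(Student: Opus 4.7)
The plan is to regard the theorem as the formal consolidation of the chain of identities already assembled in this section, with the additional task of justifying that the resulting double series converges uniformly as $|x|\to\infty$.

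First, I would take the integral representation \eqref{asympt4} as the starting point; it is valid for every $x\in\mathbb{R}^2\setminus\bar B$ and encodes the dependence of $u-U$ on the target through the contrast operator $\Lambda_{1,1}^{-1}(\Lambda_{1,1}-\Lambda_{\sigma,\mu})$. Next, I would invoke Graf's addition formula for $\Gamma_\omega(x-y)$, valid whenever $|x|>|y|$. Since $B$ is bounded, taking $|x|$ greater than $\sup_{y\in B}|y|$ makes the expansion valid for every $y\in\partial B$ simultaneously. Inserting it into \eqref{asympt4} and interchanging sum and integral reproduces \eqref{asympt5}. Expanding $U$ in cylindrical waves by completeness, and exploiting the linearity of the transmission problem \eqref{transm} to write $g=\sum_m a_m(U)\,g_m$, I would substitute and recognize the $(n,m)$-th integral as precisely $W_{n,m}[B,\sigma,\mu,\omega]$, which is the identity \eqref{theo-asympt7}.

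The main obstacle is justifying uniform convergence as $|x|\to\infty$ together with the interchange of summation and integration. This requires combining the large-argument behavior $H^{(1)}_n(\omega|x|)\sim \sqrt{2/(\pi\omega|x|)}\,e^{i(\omega|x|-n\pi/2-\pi/4)}$ with decay estimates for the scattering coefficients in both indices $n$ and $m$, of the type whose exponential form is established later in the paper. The Bessel coefficients $a_m(U)$ inherit rapid decay in $m$ from the fact that $U$ is an entire solution of $\Delta U+\omega^2 U=0$, so $\sum_m |a_m(U)||J_m(\omega R)|$ is summable on bounded sets; combined with the decay of $W_{n,m}$, this yields absolute and uniform convergence of the double series on $\{|x|\geq R_0\}$ for $R_0$ sufficiently large, giving the claimed uniformity.
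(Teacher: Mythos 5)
Your proposal is correct and follows essentially the same route as the paper, whose ``proof'' of this theorem is precisely the chain \eqref{asympt4} $\to$ Graf's formula $\to$ \eqref{asympt5} $\to$ expansion of $U$ and linearity ($g=\sum_m a_m(U)g_m$) $\to$ \eqref{asympt6} $\to$ recognition of the integrals as $W_{n,m}$. Your added discussion of uniform convergence is a welcome refinement the paper omits; just note that to sum over $n$ you need the large-\emph{order} bound $|H^{(1)}_n(\omega|x|)|\lesssim K^{|n|}|n|^{|n|}|x|^{-|n|}$ (paired with the decay \eqref{decay}) rather than the fixed-$n$ large-argument asymptotic you quote, which is not uniform in $n$.
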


\section{Integral representation of the scattering coefficients} \label{sec3}
In this section, we provide another definition of scattering coefficients which is based on integral formulations. In the following, we suppose that $\omega^2$ is not a Dirichlet eigenvalue of $-\Delta$, unless stated otherwise.

Formula \eqref{asympt4} suggests that the solution $u$  to \eqref{helm} for a given fixed frequency $\omega>0$ may be represented as
\begin{equation}
u(x)=\begin{cases} U(x) + S^{\omega}_B [\phi] (x) & x \in \mathbb{R}^2 \setminus \bar{B}, \\ \mathcal{N}_{B,\sigma,\mu}[\psi] (x) & x \in B,
\end{cases}
\label{solrep}
\end{equation}  
where the pair of densities $(\phi,\psi) \in L^2(\partial B) \times L^2(\partial B)$ satisfy the transmission conditions
\begin{equation}
\begin{cases} U(x) + S^{\omega}_B [\phi] (x) = \mathcal{N}_{B,\sigma,\mu}[\psi] (x) = \Lambda_{\sigma, \mu}[\psi](x), \\ \psi(x) = \left(\frac{1}{2} I + (K^{\omega}_B)^{*}\right)[\phi](x) + \frac{\partial U}{\partial \nu}(x),
\end{cases}
x \in \partial{B}.
\label{solrep1}
\end{equation} 
Here, the single-layer potential and the trace operator are given by 
$$
S^{\omega}_B[\phi](x) = \int_{\partial B} \Gamma_\omega (x-y) \phi(y)\, dS_y,
$$
and
$$
(K^{\omega}_B)^{*}[\phi](x) = \int_{\partial B} \frac{\partial \Gamma_\omega}{\partial \nu_x}(x-y) \phi(y)\, dS_y,
$$
and  $\mathcal{N}_{B,\sigma,\mu}$ is defined by \eqref{NtD}. We now prove that the integral equation \eqref{solrep} is uniquely solvable. 

\begin{lem} 
The operator $\mathcal{A}: L^2(\partial B) \times L^2(\partial B) \to H^1(\partial B) \times L^2(\partial B)$ defined by
$$\mathcal{A}=\begin{bmatrix} -S^{\omega}_B & \Lambda_{\sigma, \mu} \\ -\left(\frac{1}{2} I + (K^{\omega}_B)^{*}\right) & I \end{bmatrix}$$
is invertible.
\label{lem1a}
\end{lem}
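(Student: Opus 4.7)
The plan is to combine injectivity of $\mathcal{A}$ with a Fredholm alternative argument: once $\mathcal{A}$ is shown to be both injective and Fredholm of index zero, invertibility follows immediately.

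First I would establish injectivity by exploiting the layer-potential ansatz underlying $\mathcal{A}$. Suppose $\mathcal{A}(\phi,\psi)=0$, so that $S^\omega_B[\phi]=\Lambda_{\sigma,\mu}[\psi]$ and $\bigl(\tfrac{1}{2} I+(K^\omega_B)^{*}\bigr)[\phi]=\psi$ on $\partial B$. Define the piecewise function
$$w(x):=\begin{cases} S^\omega_B[\phi](x), & x\in\mathbb{R}^2\setminus\bar{B},\\ \mathcal{N}_{B,\sigma,\mu}[\psi](x), & x\in B.\end{cases}$$
By construction $w$ solves $\Delta w+\omega^2 w=0$ in $\mathbb{R}^2\setminus\bar{B}$ together with the Sommerfeld radiation condition, and $\nabla\cdot\tfrac{1}{\sigma}\nabla w+\omega^2\mu w=0$ in $B$. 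The first kernel identity, combined with the continuity of the single-layer potential across $\partial B$, gives $w|_+=w|_-$; the second kernel identity, combined with the jump relation $\partial_\nu S^\omega_B[\phi]|_+=\bigl(\tfrac{1}{2} I+(K^\omega_B)^{*}\bigr)[\phi]$ and the interior identity $\tfrac{1}{\sigma}\partial_\nu \mathcal{N}_{B,\sigma,\mu}[\psi]|_-=\psi$, gives $\partial_\nu w|_+=\tfrac{1}{\sigma}\partial_\nu w|_-$. Hence $w$ is a radiating solution of \eqref{transm} with $U\equiv 0$, so uniqueness of that transmission problem forces $w\equiv 0$. The interior identity above then yields $\psi=0$, while the vanishing of $S^\omega_B[\phi]$ in the exterior, together with the standing assumption that $\omega^2$ is not a Dirichlet eigenvalue of $-\Delta$ in $B$, forces $S^\omega_B[\phi]\equiv 0$ in $B$ as well; the normal-derivative jump for the single layer then gives $\phi=0$.

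Second, I would show that $\mathcal{A}$ is Fredholm of index zero by writing it as a compact perturbation of an invertible reference operator $\mathcal{A}_0$. A convenient choice is to replace the real frequency $\omega$ in the layer potentials by a purely imaginary frequency $i\eta$ with $\eta$ large: the associated transmission problem then carries no resonances, and a standard coercivity argument shows that the resulting $\mathcal{A}_0$ is invertible between $L^2(\partial B)\times L^2(\partial B)$ and $H^1(\partial B)\times L^2(\partial B)$. The differences $S^\omega_B-S^{i\eta}_B$ and $(K^\omega_B)^{*}-(K^{i\eta}_B)^{*}$ have smoother kernels than the leading logarithmic singularity and are therefore compact at the stated regularity, and an analogous smoothing argument handles the NtD piece. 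Thus $\mathcal{A}=\mathcal{A}_0+\mathcal{K}$ with $\mathcal{K}$ compact, so $\mathcal{A}$ inherits Fredholmness of index zero from $\mathcal{A}_0$.

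The main obstacle is this second step: pinning down a reference operator that is \emph{provably} invertible between precisely the spaces $L^2(\partial B)\times L^2(\partial B)$ and $H^1(\partial B)\times L^2(\partial B)$, and rigorously verifying compactness of the remainder---especially for the contribution involving $\Lambda_{\sigma,\mu}$, whose dependence on the inhomogeneous coefficients $(\sigma,\mu)$ is not directly captured by the standard constant-coefficient layer-potential theory. Granted both pieces, the Fredholm alternative together with the injectivity already obtained yields the invertibility of $\mathcal{A}$.
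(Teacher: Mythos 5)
Your injectivity argument is essentially the paper's: you build the same piecewise field $w$ from the two kernel identities, reduce to the homogeneous transmission problem \eqref{transm1} (the case $F=G=0$), and then peel off $\psi=0$ and $\phi=0$ exactly as the paper does (via the non-Dirichlet-eigenvalue assumption and the normal-derivative jump of the single layer). The only thing you black-box there is the uniqueness of the radiating transmission problem itself, which the paper actually supplies: it derives $\Im\int_{\partial B}\partial_\nu u|_{+}\,\bar u=0$ from the energy identity and then invokes a Rellich-type lemma plus unique continuation to force $u\equiv 0$; if you want a self-contained proof you should include that step rather than cite ``uniqueness'' as known. Where you genuinely diverge is surjectivity. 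The paper argues constructively: given data $(F,G)$, it takes a solution $u$ of \eqref{transm1} (whose existence it tacitly assumes), represents the exterior part as $S^{\omega}_B[\phi]$ using that $\omega^2$ is not a Dirichlet eigenvalue, sets $\psi=\frac{1}{\sigma}\partial_\nu u|_{-}$, and checks that this pair solves the system --- so its surjectivity is only as strong as the imported well-posedness of the transmission PDE. Your route instead establishes surjectivity through Fredholmness of index zero via a compact perturbation of a resonance-free reference operator at imaginary frequency. That is a legitimate and, if executed, more self-contained strategy (it would actually \emph{prove} existence for \eqref{transm1} rather than assume it), but the burden you correctly identify is real: you must verify that the reference system is invertible onto precisely $H^1(\partial B)\times L^2(\partial B)$ on a Lipschitz boundary, and that the frequency-shifted differences --- including the difference of the two NtD maps $\Lambda_{\sigma,\mu}$, whose mapping properties depend on the inhomogeneous coefficients --- are compact at that regularity. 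Neither write-up is fully closed: the paper leans on existence for the PDE, you lean on the Fredholm machinery; each choice trades one standard-but-nontrivial input for another.
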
 

As a consequence of Lemma \ref{lem1a}, we get the following theorem. 

\begin{theo}
The solution $u$ to \eqref{helm} can be represented in the form \eqref{solrep}, where the pair $(\phi,\psi) \in L^2(\partial B) \times L^2(\partial B)$ is the solution to
\begin{equation} 
\mathcal{A} \begin{bmatrix} \phi \\ \psi \end{bmatrix} = \begin{bmatrix} U \\ \frac{\partial U}{\partial \nu} \end{bmatrix}.
\label{def-sys-matrix}
\end{equation}
\end{theo}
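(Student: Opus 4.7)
The plan is to take Lemma~\ref{lem1a} as a black box: it yields a unique pair $(\phi,\psi)\in L^2(\partial B)\times L^2(\partial B)$ solving \eqref{def-sys-matrix}. I then define $u$ by the right-hand side of \eqref{solrep} and verify that it solves the transmission problem \eqref{transm}, whose solution is unique; hence the unique solution of \eqref{helm} admits the claimed representation.

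First I would check the PDEs in each region. For $x\in\mathbb{R}^2\setminus\bar{B}$, $U$ solves \eqref{background} by assumption, and $S^\omega_B[\phi]$ satisfies $(\Delta+\omega^2)S^\omega_B[\phi]=0$ in $\mathbb{R}^2\setminus\bar{B}$ because $\Gamma_\omega$ is the free-space outgoing fundamental solution. Thus $(\Delta+\omega^2)u=0$ outside $B$, and the Sommerfeld-type bound in \eqref{helm} follows from $u-U=S^\omega_B[\phi]$ together with the standard decay of single-layer potentials of compactly supported densities. For $x\in B$, Proposition~2.1 gives directly that $\mathcal{N}_{B,\sigma,\mu}[\psi]$ solves $\nabla\cdot\tfrac{1}{\sigma}\nabla u+\omega^2\mu u=0$ in $B$ with Neumann data $\tfrac{1}{\sigma}\partial_\nu u|_-=\psi$ on $\partial B$.

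Next I would match the transmission conditions on $\partial B$ using the two rows of \eqref{def-sys-matrix}. The first row reads $-S^\omega_B[\phi]+\Lambda_{\sigma,\mu}[\psi]=U$ on $\partial B$, which upon rearrangement gives $U+S^\omega_B[\phi]=\Lambda_{\sigma,\mu}[\psi]=\mathcal{N}_{B,\sigma,\mu}[\psi]|_{\partial B}$, i.e. $u|_+=u|_-$. For the conormal trace, I invoke the standard jump relation for the single-layer potential,
\[
\frac{\partial S^\omega_B[\phi]}{\partial\nu}\bigg|_\pm=\left(\mp\tfrac{1}{2}I+(K^\omega_B)^*\right)[\phi]\quad\text{on }\partial B,
\]
so that $\partial_\nu u|_+=\partial U/\partial\nu+(\tfrac{1}{2}I+(K^\omega_B)^*)[\phi]$. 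The second row of \eqref{def-sys-matrix} is exactly $\psi=\partial U/\partial\nu+(\tfrac{1}{2}I+(K^\omega_B)^*)[\phi]$, so $\psi=\partial_\nu u|_+$. Since $\psi=\tfrac{1}{\sigma}\partial_\nu u|_-$ and the exterior medium is the background ($\tfrac{1}{\sigma}\equiv 1$ outside $B$), this is the flux continuity $\partial_\nu u|_+=\tfrac{1}{\sigma}\partial_\nu u|_-$.

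Hence the $u$ built from $(\phi,\psi)$ satisfies all five conditions of \eqref{transm}, and by well-posedness of the transmission problem it coincides with the solution of \eqref{helm}. Uniqueness of $(\phi,\psi)$ is guaranteed by the invertibility in Lemma~\ref{lem1a}. I expect the main technical point to be confirming the correct functional framework for the jump relations, in particular that the trace identity in the first row is an equality in $H^{1/2}(\partial B)$ (the regularity gain accounted for in the codomain of $\mathcal{A}$) and that $(K^\omega_B)^*$ is a well-defined $L^2(\partial B)$ operator on the Lipschitz boundary $\partial B$; both are by now classical, but they are the only non-trivial ingredients behind the otherwise direct verification.
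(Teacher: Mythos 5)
Your proposal is correct and follows essentially the same route as the paper: the theorem is obtained from Lemma~\ref{lem1a} by observing that the system \eqref{def-sys-matrix} encodes exactly the transmission conditions of \eqref{transm}, which is what the proof of that lemma already establishes (the paper constructs $\phi$ via $u=S^{\omega}_B[\phi]$ outside $B$ and $\psi=\frac{1}{\sigma}\partial_\nu u|_-$, i.e.\ the reverse direction of your verification). One small slip: your displayed jump relation should read $\partial_\nu S^{\omega}_B[\phi]|_{\pm}=\left(\pm\tfrac{1}{2}I+(K^{\omega}_B)^{*}\right)[\phi]$ (not $\mp$), which is in fact the version you use correctly in the following line.
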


\begin{proof}[Proof of Lemma $3.1$] 
Let $(F,G) \in H^1(\partial B) \times L^2(\partial B)$. Proving that 
\begin{equation}\mathcal{A} \begin{bmatrix} \phi \\ \psi \end{bmatrix} = \begin{bmatrix} F \\ G \end{bmatrix}\label{system}
\end{equation}
is uniquely solvable is equivalent to prove existence and uniqueness of a solution in $H^1_{\text{loc}}(\mathbb{R}^2)$ to the problem
\begin{equation}
\begin{cases} \nabla \cdot \frac{1}{\sigma} \nabla u + \omega^2 \mu u = 0 & \mbox{ in } B, \\ \Delta u + \omega^2 u = 0 & \mbox{ in } \mathbb{R}^2 \setminus \bar{B}, \\ u|_{+} - u|_{-} = F & \mbox{ in } \partial B, \\ \left. \frac{\partial u}{\partial \nu}\right|_{+} - \left. \frac{1}{\sigma} \frac{\partial u}{\partial \nu} \right|_{-}=G & \mbox{ in } \partial B, \\ \left |\frac{\partial u}{\partial|x|}-i \omega u \right | \leq \frac{K}{|x|^{\frac{3}{2}}} & \mbox{if } |x| \to \infty.
\end{cases}
\label{transm1}
\end{equation}
To prove the injectivity of $\mathcal{A}$, let us suppose that $F=G=0$. Using the fact that
$$\int_{\partial B} \left. \frac{\partial u}{\partial \nu} \right|_{+} \bar{u} = \int_{\partial B} \left. \frac{1}{\sigma}\frac{\partial u}{\partial \nu} \right|_{-} \bar{u} = \int_{B} \frac{1}{\sigma} \left| \nabla u \right|^2 - \omega^2 \mu |u|^2,$$
we find that the solution $u$ satisfies
$$\Im \int_{\partial B} \left. \frac{\partial u}{\partial \nu} \right|_{+} \bar{u} = 0.$$
By applying Lemma $11.3$ of \cite{reconstruction teo 11.4} and the unique continuation property to \eqref{transm1}, we readily get $u=0$ in $\mathbb{R}^2$. Since
\begin{equation*}
u(x)=\begin{cases} S^{\omega}_B [\phi] (x) & x \in \mathbb{R}^2 \setminus \bar{B}, \\ \mathcal{N}_{B,\sigma,\mu}[\psi] (x) & x \in B,
\end{cases}
\end{equation*}
we get 
$$S^{\omega}_B [\phi] (x) = 0 \; \; x \in \mathbb{R}^2 \setminus \bar{B},$$
$$\mathcal{N}_{B,\sigma,\mu}[\psi] (x)=0 \; \; x \in B.$$
In particular, $\mathcal{N}_{B,\sigma,\mu}[\psi] (x)= \Lambda_{\sigma, \mu}[\psi](x)= 0$ on $\partial B$. Since $\Lambda_{\sigma, \mu}$ is invertible, $\psi=0$ on $\partial B$. On the other hand, $S^{\omega}_B[\phi](x)=0$ on $\partial B$. Suppose that $\omega^2$ is not a Dirichlet eigenvalue for $-\Delta$ on $B$. Since $(\Delta + \omega^2) S^{\omega}_B[\phi](x)=0$ in $B$, we have $S^{\omega}_B[\phi](x) =0$ in $B$, and hence in $\mathbb{R}^2$. It then follows from \cite{reconstruction teo 11.4} that
$$\phi =  \left. \frac{\partial S^{\omega}_B[\phi]}{\partial \nu}\right|_{+} - \left. \frac{\partial S^{\omega}_B[\phi]}{\partial \nu}\right|_{-} = 0 \; \; \mbox{on } \partial D.$$
This finishes the proof of the injectivity of $\mathcal{A}$. Since $u$ is solution to $\Delta u + \omega^2 u=0$ in $\mathbb{R}^2 \setminus \bar{B}$ and $\left |\frac{\partial u}{\partial|x|}-i \omega u \right | \leq \frac{K}{|x|^{\frac{3}{2}}}$ as $|x| \to \infty$, then there exists $\phi \in L^2(\partial B)$ such that 
\begin{equation}u(x)=S^{\omega}_B[\phi](x), \; \; \; x \in \mathbb{R}^2 \setminus \bar{B}.
\label{solout}
\end{equation}
If we set 
\begin{equation} 
\psi = \frac{1}{\sigma} \left. \frac{\partial u}{\partial \nu}\right|_{-},
\label{psi}
\end{equation}
then
$$\Lambda_{\sigma, \mu}[\psi]=u|_{-}.$$
By \eqref{solout}, 
$$ \left. \frac{\partial u}{\partial \nu}\right|_{+} = \left(\frac{1}{2} I + (K^{\omega}_B)^{*}\right) [\phi], $$
and hence
$$\psi = \left. \frac{1}{\sigma} \frac{\partial u}{\partial \nu} \right|_{-} = \left. \frac{\partial u}{\partial \nu}\right|_{+}+G = \left(\frac{1}{2} I + (K^{\omega}_B)^{*}\right) [\phi] + G.$$ 
Thus, for $x \in \partial B$,
$$G(x)= - \left(\frac{1}{2} I + (K^{\omega}_B)^{*}\right) [\phi](x) + \psi(x),$$
and from the transmission condition
$$F(x)=\Lambda_{\sigma, \mu}[\psi](x) - S^{\omega}_B[\phi](x),$$
which shows that $\phi$ and $\psi = \left. \frac{1}{\sigma} \frac{\partial u}{\partial \nu}\right|_{-}$ solve \eqref{system}.

\end{proof}

We can now define the scattering coefficients associated with $\mu$ and $\sigma$ using the operator $\mathcal{A}$. 

\begin{definition}
For $m \in \mathbb{Z}$, let $(\phi_m,\psi_m) \in L^2(\partial B) \times L^2(\partial B)$ be the solution to
\begin{equation}\mathcal{A} \begin{bmatrix} \phi_m \\ \psi_m \end{bmatrix} = \begin{bmatrix} \mathcal{C}_m \\ \frac{\partial \mathcal{C}_m}{\partial \nu} \end{bmatrix} \mbox{ on } \partial B,
\label{scatsys}
\end{equation}
where $\mathcal{C}_m$ is the cylindrical wave. For $n \in \mathbb{Z}$, we define the scattering coefficients associated with the permittivity distribution $\mu(x)$ and permeability $\sigma(x)$  for a given fixed frequency $\omega>0$ as follows:
\begin{equation}
W_{n,m}=W_{n,m}[B, \sigma,\mu,\omega]=\int_{\partial B} \overline{\mathcal{C}_n(y)} \phi_m(y) \; dS_y.
\label{scatcoef}
\end{equation}
\end{definition}

\section{Properties of the scattering coefficients} \label{sec4}

In this section, we prove important properties for the scattering coefficients. 

\subsection{Decay of the Scattering Coefficients} 

Like the homogeneous case, the coefficient $W_{n,m}$ decays exponentially as the orders $m,n$ increase. We can prove the following Proposition.

\begin{prop} For a given fixed frequency $\omega>0$, there is a constant $K$ (depending on $\sigma$, $\mu$ and $\omega$) such that 
\begin{equation}
\left|W_{n,m} [B,\sigma,\mu,\omega]\right| \leq \frac{K^{|n|+|m|}}{|n|^{|n|}|m|^{|m|}} \; \; \; \forall n,m \in \mathbb{Z}.
\label{decay}
\end{equation}
\end{prop}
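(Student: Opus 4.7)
The plan is to combine two ingredients: the invertibility of the operator $\mathcal{A}$ established in Lemma \ref{lem1a}, and the factorial decay of Bessel functions at high order. By definition and Cauchy--Schwarz,
\begin{equation*}
|W_{n,m}| \leq \|\mathcal{C}_n\|_{L^2(\partial B)}\,\|\phi_m\|_{L^2(\partial B)},
\end{equation*}
so it suffices to bound each factor by the corresponding one-sided estimate $C^{|n|}/|n|^{|n|}$ and $C^{|m|}/|m|^{|m|}$.

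For the first factor, I would invoke the standard power-series/asymptotic bound for the Bessel function, namely $|J_n(t)| \leq (t/2)^{|n|}/|n|!$ for real $t$ and $n\in\mathbb{Z}$ (using $J_{-n}=(-1)^nJ_n$ to reduce to $n\geq 0$), combined with Stirling's formula $|n|!\geq c\sqrt{|n|}(|n|/e)^{|n|}$. Since $\partial B$ is bounded, say $|y|\leq R_B$ on $\partial B$, this yields
\begin{equation*}
\|\mathcal{C}_n\|_{L^\infty(\partial B)} \leq \frac{(e\omega R_B/2)^{|n|}}{|n|^{|n|}}\cdot\frac{1}{c\sqrt{|n|}},
\end{equation*}
and hence $\|\mathcal{C}_n\|_{L^2(\partial B)} \leq K_1^{|n|}/|n|^{|n|}$ for a constant $K_1$ depending only on $\omega$ and $B$.

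For the second factor I would pass through \eqref{scatsys}. By Lemma \ref{lem1a}, $\mathcal{A}:L^2(\partial B)\times L^2(\partial B)\to H^1(\partial B)\times L^2(\partial B)$ is invertible, so there exists $C>0$ with
\begin{equation*}
\|\phi_m\|_{L^2(\partial B)} \leq C\bigl(\|\mathcal{C}_m\|_{H^1(\partial B)} + \|\partial_\nu \mathcal{C}_m\|_{L^2(\partial B)}\bigr).
\end{equation*}
The tangential and normal derivatives of $\mathcal{C}_m$ on $\partial B$ can be written using the recurrence $J_m'(t)=\tfrac{1}{2}(J_{m-1}(t)-J_{m+1}(t))$ and the factor $im\,e^{im\theta}$, both of which still decay factorially; a direct computation shows $\|\mathcal{C}_m\|_{H^1(\partial B)}$ and $\|\partial_\nu\mathcal{C}_m\|_{L^2(\partial B)}$ are both bounded by $K_2^{|m|}/|m|^{|m|}$ (the linear prefactor $|m|$ is absorbed by adjusting the base constant, since $|m|\cdot C^{|m|}\leq (C+1)^{|m|}\cdot$ a polynomial, and the $|m|^{|m|}$ in the denominator dominates any polynomial growth).

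Multiplying the two estimates gives the claim with $K=\max(K_1,CK_2)$. The only genuinely delicate step is the bookkeeping of the Bessel asymptotics for both positive and negative indices and confirming that the derivative estimates do not destroy the $|m|^{|m|}$ denominator; this is routine but requires care, and it is where most of the work lies. The invertibility of $\mathcal{A}$ is used as a black box via Lemma \ref{lem1a}, so the heavy PDE analysis has already been carried out.
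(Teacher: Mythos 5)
Your proposal is correct and follows essentially the same route as the paper: bound $|W_{n,m}|$ by Cauchy--Schwarz, control $\|\mathcal{C}_n\|_{L^2(\partial B)}$ via the large-order Bessel asymptotics, and control $\|\phi_m\|_{L^2(\partial B)}$ by the boundedness of the solution operator for the system \eqref{scatsys} applied to the data $(\mathcal{C}_m,\partial_\nu\mathcal{C}_m)$, whose norms decay factorially. The paper cites the a priori estimate $\|\phi_m\|_{L^2(\partial B)}+\|\psi_m\|_{L^2(\partial B)}\leq K(\|\mathcal{C}_m\|_{L^2(\partial B)}+\|\nabla\mathcal{C}_m\|_{L^2(\partial B)})$ from the boundary-layer reference rather than deducing it from Lemma \ref{lem1a}, but this is the same ingredient.
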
 

\begin{proof} 
Recall that
$$\mathcal{C}_m(x)=J_m(\omega|x|)e^{im\theta_x}.$$
Since
$$J_m(t) \sim \frac{1}{\sqrt{2 \pi |m|}} \left( \frac{et}{2|m|} \right)^{|m|}$$ 
as $m \to \infty$, we have
$$\|\mathcal{C}_m\|_{L^2(\partial  B)} + \|\nabla \mathcal{C}_m\|_{L^2(\partial B)} \leq \frac{K^{|m|}}{|m|^{|m|}}.$$ 
Then, with the same arguments as those of \cite{boundary layer}, there exists another constant $K$ such that
\begin{equation*} 
\begin{aligned} 
\|\phi_m\|_{L^2(\partial  B)} + \|\psi_m \|_{L^2(\partial B)}  & \leq K \left(\|\mathcal{C}_m\|_{L^2(\partial  B)} + \left\|\frac{ \partial \mathcal{C}_m}{\partial \nu} \right\|_{L^2(\partial B)}\right) \leq \\ & \leq K \left(\|\mathcal{C}_m\|_{L^2(\partial  B)} + \|\nabla \mathcal{C}_m\|_{L^2(\partial B)}\right) .
\end{aligned}
\end{equation*} 
Hence, for another constant $K$,
$$\|\phi_m\|_{L^2(\partial  B)} \leq \frac{K^{|m|}}{|m|^{|m|}}.$$
Since
$$\left\| \overline{\mathcal{C}_n(y)} \right\|_{L^2(\partial  B)} \leq \frac{K^{|n|}}{|n|^{|n|}},$$
by the definition of the scattering coefficients $W_{n,m}$,  we have \eqref{decay}.

\end{proof}

\subsection{Transformation formulas}
We introduce the notation for translation, scaling, and rotation of a shape $B$ as
$$B^z:= B+z, \; B^s:= sB, \; B^{\theta}= e^{i\theta}B,$$
and those of the material parameter $\sigma$ (resp. $\mu$) as 
$$\sigma^z:= \sigma(\cdot - z), \; \sigma^s := \sigma(s^{-1} \cdot), \; \sigma^{\theta}:= \sigma(e^{-i\theta} \cdot).$$ 
We denote by $\phi_{U,B}=\phi_{U,B, \omega}$ the solution to \eqref{def-sys-matrix} given the domain $B$, the source term $U$, and the frequency $\omega$. We can prove that there exist explicit relations between the inhomogeneous scattering coefficients of $B$ and $B^z$, $B^s$, $B^{\theta}$. We prove the following Propositions.

\begin{prop} [Translation formula]

For any $z \in \mathbb{R}^2$, the following relation holds
\begin{equation}
W_{n,m} [B^z,\sigma^z,\mu^z,\omega] = \sum_{a,b}\overline{\mathcal{C}_a(z)} \mathcal{C}_b(z) W_{n-a,m-b} [B,\sigma,\mu,\omega].
\label{translation}
\end{equation}
\end{prop}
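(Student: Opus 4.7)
The plan is to reduce the scattering coefficient over $B^z$ to one over $B$ via the change of variable $y = y' + z$, and then to expand every factor involving $y'+z$ by Graf's addition formula. The crucial identity I will use is the following consequence of Graf's theorem, obtained (for instance) by identifying coefficients in the Jacobi--Anger decomposition $e^{i\omega\xi\cdot x}=\sum_m i^m e^{-im\alpha}\mathcal{C}_m(x)$ of the plane wave applied to $x+z$:
\begin{equation}
\mathcal{C}_m(y'+z) \;=\; \sum_{b\in\mathbb{Z}} \mathcal{C}_{m-b}(y')\,\mathcal{C}_b(z),
\label{graf-plan}
\end{equation}
together with the analogous expansion of its normal derivative, convergent uniformly on $\partial B$ thanks to the exponential decay of $J_b(\omega|z|)$ in $|b|$ used already in the decay proposition.

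Next I would exploit translation-equivariance of the layer potentials and of the NtD map. Writing $\phi^{(z)}_m,\psi^{(z)}_m$ for the densities associated with $B^z,\sigma^z,\mu^z$ and source $\mathcal{C}_m$, the change of variable $y = y'+z$ inside $S^\omega_{B^z}$, $(K^\omega_{B^z})^*$ and $\mathcal{N}_{B^z,\sigma^z,\mu^z}$ turns these operators into the corresponding operators on $\partial B$ acting on the translated densities $\tilde\phi(y'):=\phi^{(z)}_m(y'+z)$, $\tilde\psi(y'):=\psi^{(z)}_m(y'+z)$. Hence system \eqref{def-sys-matrix} for the shifted problem becomes
\begin{equation*}
\mathcal{A}\begin{bmatrix}\tilde\phi\\\tilde\psi\end{bmatrix}
\;=\;
\begin{bmatrix}\mathcal{C}_m(\cdot+z)\\[2pt]\partial_\nu \mathcal{C}_m(\cdot+z)\end{bmatrix}
\qquad \text{on }\partial B.
\end{equation*}
Substituting \eqref{graf-plan} on the right, invertibility of $\mathcal{A}$ (Lemma~\ref{lem1a}) and linearity immediately give the pointwise expansion
\begin{equation*}
\phi^{(z)}_m(y'+z) \;=\; \sum_{b\in\mathbb{Z}} \mathcal{C}_b(z)\,\phi_{m-b}(y'),
\end{equation*}
with $\phi_{m-b}$ the density associated with the unshifted data $(\mathcal{C}_{m-b},\partial_\nu \mathcal{C}_{m-b})$.

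Plugging this into the definition \eqref{scatcoef} applied to $W_{n,m}[B^z,\sigma^z,\mu^z,\omega]$, changing variables $y=y'+z$ on $\partial B^z$, and expanding the test factor $\overline{\mathcal{C}_n(y'+z)}$ by \eqref{graf-plan} as well, I obtain
\begin{equation*}
W_{n,m}[B^z,\sigma^z,\mu^z,\omega]
=\sum_{a,b}\overline{\mathcal{C}_a(z)}\,\mathcal{C}_b(z)
\int_{\partial B}\overline{\mathcal{C}_{n-a}(y')}\,\phi_{m-b}(y')\,dS_{y'},
\end{equation*}
which is exactly \eqref{translation}.

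The routine part is the translation equivariance of $S^\omega$, $(K^\omega)^*$ and $\Lambda_{\sigma,\mu}$, which follows from the invariance of $\Gamma_\omega$ and $N_{\sigma,\mu}$ under the simultaneous shift of $(B,\sigma,\mu)$ by $z$. The main obstacle is justifying the interchange of the double series in $a,b$ with the boundary integral; I would handle this by the same norm estimates used in the proof of the decay of the $W_{n,m}$ (Proposition~\ref{decay} resides in Section~\ref{sec4}), which ensure absolute convergence of the series in $\|\cdot\|_{L^2(\partial B)}$ after applying $\mathcal{A}^{-1}$, since $\|\mathcal{C}_b(z)\mathcal{C}_{m-b}\|_{L^2(\partial B)}$ decays faster than any geometric rate in $|b|$.
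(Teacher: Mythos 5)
Your proposal is correct and follows essentially the same route as the paper: change of variables $y=y'+z$, Graf's addition formula applied to both the test function $\overline{\mathcal{C}_n(\cdot+z)}$ and the source data, and translation equivariance of $S^\omega_B$, $(K^\omega_B)^*$ and $\mathcal{N}_{B,\sigma,\mu}$ (via uniqueness of the Neumann function) combined with invertibility of $\mathcal{A}$ to expand the shifted density as $\sum_b \mathcal{C}_b(z)\phi_{m-b}$. Your index bookkeeping is consistent with the paper's (which writes $\overline{\mathcal{C}_b(-z)}\phi_{m+b}$ and reindexes at the end), and your explicit justification of interchanging the double series with the boundary integral via the decay estimates is a welcome addition the paper leaves implicit.
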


\begin{proof}
Let $\tilde{\phi}_m = \phi_{\mathcal{C}_m,B^z}$ and $\tilde{y}=y+z$ with $y \in B$. By the definition of the scattering coefficients:
\begin{equation}
\begin{aligned}
W_{n,m} [B^z,\sigma^z,\mu^z,\omega] & = \int_{\partial B^z} \overline{\mathcal{C}_n(\tilde{y})} \tilde{\phi}_m(\tilde{y}) \; dS_{\tilde{y}} = \\ & = \int_{\partial B} \overline{\mathcal{C}_n(y+z)} \tilde{\phi}_m(y+z) \; dS_y.
\label{scatcoeftrasl}
\end{aligned}
\end{equation}
From the identity \cite{math and stat method}, 
$$\mathcal{C}_n(y-z) =\sum_{l \in \mathbb{Z}}\mathcal{C}_{l+n} (y) \overline{\mathcal{C}_l(z)},$$
we have
\begin{equation*}
\begin{aligned}
\overline{\mathcal{C}_n(y+z)} & =  \sum_{a \in \mathbb{Z}} \overline{\mathcal{C}_{a+n} (y)} \mathcal{C}_a(-z) ,
\end{aligned}
\end{equation*}
and  
\begin{equation*}
\begin{aligned}
\mathcal{C}_m(\tilde{x}) = \mathcal{C}_m(x+z) & = \sum_{b \in \mathbb{Z}} \mathcal{C}_{b+m} (x) \overline{\mathcal{C}_b(-z)},
\end{aligned}
\end{equation*}
where $\tilde{x}=x+z$ with $x \in B$. To find $\tilde{\phi}_m$, let us consider 
$$\tilde{\mathcal{A}} \begin{bmatrix} \tilde{\phi}_{m} \\ \tilde{\psi}_{m} \end{bmatrix} (\tilde{x})=\begin{bmatrix} -S^{\omega}_{B^z}[\tilde{\phi}_{m}](\tilde{x}) & \Lambda_{B^z, \sigma^z, \mu^z}[\tilde{\psi}_{m}](\tilde{x}) \\ -\left(\frac{1}{2} I + (K^{\omega}_{B^z})^{*}\right)[\tilde{\phi}_{m}](\tilde{x})  & I[\tilde{\psi}_{m}](\tilde{x}) \end{bmatrix}.$$
Recall that $(\tilde{\phi}_m, \tilde{\psi}_m) \in L^2(\partial B^z) \times L^2 (\partial B^z)$ is the solution to 
\begin{equation} 
\tilde{\mathcal{A}} \begin{bmatrix} \tilde{\phi}_{m}\\ \tilde{\psi}_{m} \end{bmatrix} (\tilde{x}) = \begin{bmatrix} {\mathcal{C}}_{m}(\tilde{x}) \\ \frac{\partial {\mathcal{C}}_{m}}{\partial \nu_{\tilde{x}}}(\tilde{x}) \end{bmatrix}.
\label{systransl1}
\end{equation}
Let $\tilde{w}= w + z$ for $w \in B$. Let us prove that 
\begin{equation}
\begin{aligned}\begin{bmatrix} \tilde{\phi}_m(\tilde{w}) \\ \tilde{\psi}_m(\tilde{w}) \end{bmatrix}  = \sum_{b\in \mathbb{Z}}  \overline{\mathcal{C}_b(-z)}  \begin{bmatrix} \phi_{m+b}(w) \\ \psi_{m+b}(w) \end{bmatrix} 
\label{traslsol}
\end{aligned}
\end{equation}
is the solution. Since 
$$\begin{bmatrix} {\mathcal{C}}_{m}(\tilde{x}) \\ \frac{\partial {\mathcal{C}}_{m}}{\partial \nu_{\tilde{x}}}(\tilde{x}) \end{bmatrix}= \begin{bmatrix} {\mathcal{C}}_{m}(x+z) \\ \frac{\partial {\mathcal{C}}_{m}}{\partial \nu_{x+z}}(x+z) \end{bmatrix} = \sum_{b\in \mathbb{Z}}  \overline{\mathcal{C}_b(-z)}  \begin{bmatrix}  \mathcal{C}_{m+b}(x) \\ \frac{\partial \mathcal{C}_{m+b}}{\partial \nu_{x}}(x) \end{bmatrix}, $$
if we prove that
\begin{equation} 
\tilde{\mathcal{A}} \begin{bmatrix} \tilde{\phi}_{m} \\ \tilde{\psi}_{m} \end{bmatrix} (\tilde{x})=\begin{bmatrix} -S^{\omega}_{B}[\tilde{\phi}_{m}(\cdot + z)](x) & \Lambda_{B, \sigma, \mu}[\tilde{\psi}_{m}(\cdot + z)](x) \\ -\left(\frac{1}{2} I + (K^{\omega}_{B})^{*}\right)[\tilde{\phi}_{m}(\cdot +z)](x)  & I[\tilde{\psi}_{m}(\cdot+z)](x) \end{bmatrix},
\label{transl-A}
\end{equation}
then by the linearity of operator $\mathcal{A}$ and the existence and uniqueness of a solution to system \eqref{systransl1}, we have \eqref{traslsol}. Let us prove \eqref{transl-A}. We write
\begin{equation*}
\begin{aligned}
S^{\omega}_{B^z}[\tilde{\phi}_m](\tilde{x}) & = \int_{B^z} \Gamma_{\omega} (\tilde{x}-\tilde{y}) \tilde{\phi}_m(\tilde{y}) \; dS_{\tilde{y}} = \\ & = \int_{B} \Gamma_{\omega} (x+z-(y+z)) \tilde{\phi}_m(y+z) \; dS_y = \\ & = S^{\omega}_{B}[ \tilde{\phi}_m(\cdot+z)](x), 
\end{aligned}
\end{equation*}
$$ -\left(\frac{1}{2} I + (K^{\omega}_{B^z})^{*}\right)[\tilde{\phi}_m](\tilde{x}) = -\left(\frac{1}{2} I + (K^{\omega}_B)^{*}\right)[\tilde{\phi}_m(\cdot+z)](x),$$
\begin{equation*}
\begin{aligned}\mathcal{N}_{B^z, \sigma^z, \mu^z}[\tilde{\psi}_m](\tilde{x}) & =\int_{\partial B^z} \tilde{N}_{\sigma^z, \mu^z}(\tilde{x},\tilde{y}) \tilde{\psi}_m (\tilde{y}) \; dS_{\tilde{y}} = \\ & = \int_{\partial B} N_{\sigma, \mu}(x,y) \tilde{\psi}_m(y+z) \; dS_{y} = \\ & = \mathcal{N}_{B, \sigma, \mu}[\tilde{\psi}_m (\cdot + z)](x),
\end{aligned}
\end{equation*}
where $ \tilde{N}_{\sigma^z, \mu^z}(\tilde{x},\tilde{y}) = N_{\sigma, \mu}(x,y)$ follows from the existence and uniqueness of the Neumann function result. In fact, for $\tilde{w} \in B^z$ and by a change of variables, the Neumann problem 
\begin{equation}
\begin{cases} \nabla_{\tilde{y}} \cdot \frac{1}{\sigma^z(\tilde{y})} \nabla_{\tilde{y}} \tilde{N}_{\sigma^z, \mu^z}(\tilde{w},\tilde{y}) + \omega^2 \mu^z(\tilde{y}) \tilde{N}_{\sigma^z, \mu^z}(\tilde{w},\tilde{y}) = - \delta_{\tilde{w}}(\tilde{y}) & \tilde{y} \in B^z, \\ \frac{1}{\sigma^z(\tilde{y})}\frac{\partial \tilde{N}_{\sigma^z, \mu^z}}{\partial_{\nu_{\tilde{y}}}} (\tilde{w}, \tilde{y})= 0 & \tilde{y} \in \partial B^z,
\end{cases}
\label{neumann 1}
\end{equation}
 can be rewritten as
\begin{equation}
\begin{cases} \nabla_{y} \cdot \frac{1}{\sigma(y)} \nabla_{y} \tilde{N}_{\sigma^z, \mu^z} (w+z,y+z) + \omega^2 \mu(y) \tilde{N}_{\sigma^z, \mu^z}(w+z,y+z) = - \delta_{w}(y) & y \in B, \\ \frac{1}{\sigma(y)}\frac{\partial \tilde{N}_{\sigma^z, \mu^z}}{\partial_{\nu_{y}}} (w+z,y+z)= 0 & y \in \partial B.
\end{cases}
\label{neumann 2}
\end{equation}
From the uniqueness of the Neumann function, $\tilde{N}_{\sigma^z, \mu^z} (w+z,y+z)= N_{\sigma, \mu}(x,y)$ is the solution to \eqref{neumann 2}. 

Then, \eqref{scatcoeftrasl} yields
\begin{equation*}
\begin{aligned}
W_{n,m} [B^z,\sigma^z,\mu^z,\omega] & = \sum_{a,b} \mathcal{C}_a(-z) \overline{\mathcal{C}_b(-z)} W_{n+a,m+b} [B,\sigma,\mu,\omega]. 
\end{aligned}
\end{equation*}  
Since $\overline{\mathcal{C}_a(-z)}=\mathcal{C}_{-a}(z)$, \eqref{translation} holds.

\end{proof}

\begin{prop} [Scaling formula] For any $s>0$, the following relation holds 
\begin{equation}
W_{n,m} [B^s,\sigma^s,\mu^s,\omega] = W_{n,m} [B,\sigma,\mu,s \omega].
\label{scaling}
\end{equation}
\end{prop}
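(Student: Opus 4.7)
The plan is to mimic the proof of the translation formula by introducing the change of variables $\tilde{x}=sx$, $\tilde{y}=sy$, which identifies the scattering system for $(B^s,\sigma^s,\mu^s,\omega)$ with the scattering system for $(B,\sigma,\mu,s\omega)$ after an appropriate rescaling of the densities. The key input will be uniqueness of solutions to \eqref{scatsys} provided by Lemma \ref{lem1a}, applied at frequency $s\omega$.

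First, I would collect the basic scaling identities. From $|\tilde{x}|=s|x|$ and $\theta_{\tilde{x}}=\theta_x$, the cylindrical wave and fundamental solution satisfy $\mathcal{C}_{m,\omega}(sx)=\mathcal{C}_{m,s\omega}(x)$ and $\Gamma_{\omega}(sz)=\Gamma_{s\omega}(z)$. Because dilations preserve unit vectors, $\nu_{\tilde{x}}=\nu_x$ and $\partial_{\nu_{\tilde{x}}}=s^{-1}\partial_{\nu_x}$ after pull-back; in particular $\partial_{\nu_{\tilde{x}}}\mathcal{C}_{m,\omega}(sx)=s^{-1}\partial_{\nu_x}\mathcal{C}_{m,s\omega}(x)$. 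A repeat of the uniqueness argument from the translation proof, applied to the pull-back of \eqref{neumann 1}, yields $\tilde{N}_{\sigma^s,\mu^s}(sw,sy)=N_{\sigma,\mu,s\omega}(w,y)$: the factor $s^{-2}$ produced by $\nabla_{\tilde{y}}=s^{-1}\nabla_y$ in the PDE matches $\delta_{sw}(sy)=s^{-2}\delta_w(y)$ in two dimensions, so rescaling converts the Neumann problem on $B^s$ at frequency $\omega$ into the Neumann problem on $B$ at frequency $s\omega$.

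Second, a direct change of variable in the integral definitions gives, for any density $\tilde{\phi}$ on $\partial B^s$,
$$
S^{\omega}_{B^s}[\tilde{\phi}](\tilde{x}) = S^{s\omega}_{B}\bigl[s\,\tilde{\phi}(s\,\cdot)\bigr](x),\qquad (K^{\omega}_{B^s})^{*}[\tilde{\phi}](\tilde{x}) = (K^{s\omega}_{B})^{*}\bigl[\tilde{\phi}(s\,\cdot)\bigr](x),
$$
and analogously $\Lambda_{B^s,\sigma^s,\mu^s}[\tilde{\psi}](\tilde{x})=\Lambda_{B,\sigma,\mu,s\omega}[s\,\tilde{\psi}(s\,\cdot)](x)$; the $s$ from $dS_{\tilde{y}}=s\,dS_y$ survives in the single-layer and NtD operators, while in $(K^{\omega}_{B^s})^{*}$ it cancels the $s^{-1}$ coming from the normal derivative of $\Gamma_\omega$. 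Plugging these identities into \eqref{scatsys} for $(\tilde{\phi}_m,\tilde{\psi}_m)$ at $\tilde{x}=sx$, and multiplying the second row by $s$ to absorb the $s^{-1}$ in $\partial_{\nu_{\tilde{x}}}\mathcal{C}_{m,\omega}$, I expect the rescaled pair $(u,v)(x):=(s\,\tilde{\phi}_m(sx),\,s\,\tilde{\psi}_m(sx))$ to solve the system $\mathcal{A}$ on $\partial B$ at frequency $s\omega$ with right-hand side $(\mathcal{C}_{m,s\omega},\partial_{\nu_x}\mathcal{C}_{m,s\omega})$. Lemma \ref{lem1a} at frequency $s\omega$ then forces $u=\phi_{m,s\omega}$, i.e.\ $\tilde{\phi}_m(sy)=s^{-1}\phi_{m,s\omega}(y)$.

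Finally, substituting into \eqref{scatcoef} and changing variables $\tilde{y}=sy$,
$$
W_{n,m}[B^s,\sigma^s,\mu^s,\omega]=\int_{\partial B^s}\overline{\mathcal{C}_{n,\omega}(\tilde{y})}\,\tilde{\phi}_m(\tilde{y})\,dS_{\tilde{y}} =\int_{\partial B}\overline{\mathcal{C}_{n,s\omega}(y)}\,s^{-1}\phi_{m,s\omega}(y)\,s\,dS_y,
$$
which is exactly $W_{n,m}[B,\sigma,\mu,s\omega]$ and proves \eqref{scaling}. The main obstacle is the bookkeeping of the $s$-factors: one must consistently track the $s$ from $dS_{\tilde{y}}$, the $s^{-1}$ from $\partial_{\nu_{\tilde{x}}}$, and the $s^{-2}$ from the delta function in the Neumann problem, and then choose the right prefactor in the ansatz $(u,v)=(s\,\tilde{\phi}_m(s\,\cdot),s\,\tilde{\psi}_m(s\,\cdot))$ so that both rows of \eqref{scatsys} hit the correct right-hand side at frequency $s\omega$; once that is verified, uniqueness in Lemma \ref{lem1a} closes the argument.
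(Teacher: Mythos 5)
Your proposal is correct and follows essentially the same route as the paper: the change of variables $\tilde{x}=sx$, the scaling identities for $S^{\omega}_{B^s}$, $(K^{\omega}_{B^s})^{*}$ and the Neumann function (with the $s^{-2}$ in the delta matching the $s^{-2}$ from the rescaled divergence-form operator), the identification $\tilde{\phi}_m(s\,\cdot)=s^{-1}\phi_{m,s\omega}$ via uniqueness in Lemma \ref{lem1a} at frequency $s\omega$, and the final cancellation of the Jacobian factor $s$ against $s^{-1}$ in the integral \eqref{scatcoef}. The bookkeeping of the $s$-factors in your ansatz agrees with the paper's \eqref{scalsol}, so no gap remains.
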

\begin{proof} 
Let $\tilde{\phi}_{m, \omega} = \phi_{\mathcal{C}_m,B^s, \omega}$ and $\tilde{y}=sy$ with $y \in B$. Since $(|\tilde{y}|, \theta_{\tilde{y}})=(s |y|, \theta_{y})$, by the definition of the scattering coefficients: 
\begin{equation}
\begin{aligned}
W_{n,m} [B^s,\sigma^s,\mu^s,\omega] & = \int_{\partial B^s} \overline{\mathcal{C}_{n, \omega} (\tilde{y})} \tilde{\phi}_{m,\omega}(\tilde{y}) \; dS_{\tilde{y}} = \\ & = s \int_{\partial B} \overline{\mathcal{C}_{n,s\omega}(y)} \tilde{\phi}_{m,\omega}(sy) \; dS_y.
\label{scatcoefscal}
\end{aligned}
\end{equation}
We have
\begin{equation*}
\begin{aligned}
\mathcal{C}_{m,\omega}(\tilde{x}) = \mathcal{C}_{m, \omega}(sx) & = \mathcal{C}_{m, s\omega}(x),
\end{aligned}
\end{equation*}
where $\tilde{x}=sx$ with $x \in B$. To find $\tilde{\phi}_{m,\omega}$, let us consider 
$$\tilde{\mathcal{A}} \begin{bmatrix} \tilde{\phi}_{m, \omega} \\ \tilde{\psi}_{m, \omega} \end{bmatrix} (\tilde{x})=\begin{bmatrix} -S^{\omega}_{B^s}[\tilde{\phi}_{m, \omega}](\tilde{x}) & \Lambda_{B^s, \sigma^s, \mu^s, \omega}[\tilde{\psi}_{m, \omega}](\tilde{x}) \\ -\left(\frac{1}{2} I + (K^{\omega}_{B^s})^{*}\right)[\tilde{\phi}_{m, \omega}](\tilde{x})  & I[\tilde{\psi}_{m, \omega}](\tilde{x}) \end{bmatrix}.$$
Recall that $(\tilde{\phi}_{m, \omega}, \tilde{\psi}_{m, \omega}) \in L^2(\partial B^s) \times L^2 (\partial B^s)$ is the solution to 
\begin{equation}
\tilde{\mathcal{A}} \begin{bmatrix} \tilde{\phi}_{m, \omega}\\ \tilde{\psi}_{m, \omega} \end{bmatrix} (\tilde{x}) = \begin{bmatrix} {\mathcal{C}}_{m, \omega}(\tilde{x}) \\ \frac{\partial {\mathcal{C}}_{m, \omega}}{\partial \nu_{\tilde{x}}}(\tilde{x}) \end{bmatrix}.
\label{sysscal1}
\end{equation}
Let $\tilde{w}= sw $ for $w \in B$. Let us prove that 
\begin{equation}
\begin{aligned}
\begin{bmatrix} \tilde{\phi}_{m,\omega}(\tilde{w}) \\ \tilde{\psi}_{m,\omega}(\tilde{w}) \end{bmatrix} & = s^{-1}  \begin{bmatrix} \phi_{m,s\omega} (w) \\ \psi_{m,s\omega} (w) \end{bmatrix}
\label{scalsol}
\end{aligned}
\end{equation} 
is the solution. Since 
$$\begin{bmatrix} {\mathcal{C}}_{m, \omega}(\tilde{x}) \\ \frac{\partial {\mathcal{C}}_{m, \omega}}{\partial \nu_{\tilde{x}}}(\tilde{x}) \end{bmatrix}= \begin{bmatrix} {\mathcal{C}}_{m, \omega}(sx) \\ \frac{\partial {\mathcal{C}}_{m, \omega}}{\partial \nu_{sx}}(sx) \end{bmatrix}= \begin{bmatrix} \mathcal{C}_{m, s \omega}(x) \\ s^{-1} \frac{\partial\mathcal{C}_{m, s \omega}}{\partial \nu_{x} }(x) \end{bmatrix}, $$
if we prove that
\begin{equation} 
\tilde{\mathcal{A}} \begin{bmatrix} \tilde{\phi}_{m, \omega} \\ \tilde{\psi}_{m, \omega} \end{bmatrix} (\tilde{x})=\begin{bmatrix} -sS^{s\omega}_B [ \tilde{\phi}_{m, \omega}(s \; \cdot)](x) & s\Lambda_{B, \sigma, \mu, s\omega} [ \tilde{\psi}_{m, \omega}(s \; \cdot)](x) \\ -\left(\frac{1}{2} I + (K^{s\omega}_B)^{*} \right) [ \tilde{\phi}_{m, \omega}(s \; \cdot)](x) & I [ \tilde{\psi}_{m, \omega}(s \; \cdot)](x) \end{bmatrix},
\label{sysscan}
\end{equation}
then by the linearity of operator $\mathcal{A}$ and the existence and uniqueness of a solution to system \eqref{sysscal1}, we have \eqref{scalsol}. Let us prove \eqref{sysscan}. We write
\begin{equation*}
\begin{aligned}
S^{\omega}_{B^s}[\tilde{\phi}_{m,\omega}](\tilde{x}) & = \int_{B^s} \Gamma_{\omega} (\tilde{x}-\tilde{y}) \tilde{\phi}_{m, \omega}(\tilde{y}) \; dS_{\tilde{y}} = \\ & = \int_{B} \Gamma_{\omega} (sx -sy) \tilde{\phi}_{m, \omega}(sy) s \; dS_y = \\ & = s S^{s \omega}_{B}[ \tilde{\phi}_{m, \omega}(s \; \cdot)](x), 
\end{aligned}
\end{equation*}
$$ -\left(\frac{1}{2} I + (K^{\omega}_{B^s})^{*}\right)[\tilde{\phi}_{m, \omega}](\tilde{x}) = -\left(\frac{1}{2} I + (K^{s\omega}_B)^{*}\right)[\tilde{\phi}_{m, \omega}(s \; \cdot )](x),$$
\begin{equation*}
\begin{aligned}\mathcal{N}_{B^s, \sigma^s, \mu^s, \omega}[\tilde{\psi}_{m,\omega}](\tilde{x}) & =\int_{\partial B^s} \tilde{N}_{\sigma^s, \mu^s, \omega}(\tilde{x},\tilde{y}) \tilde{\psi}_{m, \omega} (\tilde{y}) \; dS_{\tilde{y}} = \\ & = s \int_{\partial B} N_{\sigma, \mu, s \omega}(x,y) \tilde{\psi}_{m, \omega}(s \; \cdot) \; dS_{y} = \\ & = s \mathcal{N}_{B, \sigma, \mu, s \omega}[\tilde{\psi}_{m, \omega}(s \; \cdot) ](x) ,
\end{aligned}
\end{equation*}
where $ \tilde{N}_{\sigma^s, \mu^s, \omega}(\tilde{x},\tilde{y}) = N_{\sigma, \mu, s\omega} (x,y)$ follows from existence and uniqueness of the Neumann function. In fact, for $\tilde{w} \in B^s$ and by a change of variables,  the Neumann problem 
\begin{equation}
\begin{cases} \nabla_{\tilde{y}} \cdot \frac{1}{\sigma^s(\tilde{y})} \nabla_{\tilde{y}} \tilde{N}_{\sigma^s, \mu^s, \omega}(\tilde{w},\tilde{y}) + \omega^2 \mu^s(\tilde{y}) \tilde{N}_{\sigma^s, \mu^s, \omega}(\tilde{w},\tilde{y}) = - \delta_{\tilde{w}}(\tilde{y}) & \tilde{y} \in B^s, \\ \frac{1}{\sigma^s(\tilde{y})}\frac{\partial \tilde{N}_{\sigma^s, \mu^s, \omega}}{\partial_{\nu_{\tilde{y}}}} (\tilde{w}, \tilde{y})= 0 & \tilde{y} \in \partial B^s,
\end{cases}
\label{neumann scal 1}
\end{equation} can be rewritten as 
\begin{equation}
\begin{cases} \nabla_{y} \cdot \frac{1}{\sigma(y)} \nabla_{y} \tilde{N}_{\sigma^s, \mu^s, \omega}(sw,sy) + s^2 \omega^2 \mu(y) \tilde{N}_{\sigma^s, \mu^s, \omega}(sw,sy) = - \delta_{w}(y) & y \in B, \\ \frac{1}{\sigma(y)}\frac{\partial \tilde{N}_{\sigma^s, \mu^s, \omega}}{\partial_{\nu_{y}}} (sw,sy)= 0 & y \in \partial B.
\end{cases}
\label{neumann scal 2}
\end{equation} 
from the uniqueness of the Neumann function, $\tilde{N}_{\sigma^s, \mu^s, \omega}(sx,sy) = N_{\sigma, \mu, s\omega} (x,y)$ is the solution to \eqref{neumann scal 2}.

Then, \eqref{scatcoefscal} yields
\begin{equation*}
\begin{aligned}
W_{n,m} [B^s,\sigma^s,\mu^s,\omega] & = W_{n,m} [B,\sigma,\mu, s \omega]. 
\end{aligned}
\end{equation*}

\end{proof}

\begin{prop}[Rotation formula] For any $\theta$, the following relation holds 
\begin{equation}
W_{n,m} [B^{\theta},\sigma^{\theta},\mu^{\theta},\omega] = e^{i(m-n) \theta} W_{n,m} [B,\sigma,\mu, \omega].
\label{rotation}
\end{equation}
\end{prop}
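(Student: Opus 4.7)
The plan is to mirror the structure of the translation and scaling proofs, exploiting the rotational invariance of the fundamental solution $\Gamma_\omega$ (which depends only on $|x-y|$) and the simple transformation law of cylindrical waves under rotation.

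First, I would set $\tilde{\phi}_{m} = \phi_{\mathcal{C}_m, B^\theta}$, change variables $\tilde{y} = e^{i\theta} y$ with $y \in B$, and note that $(|\tilde{y}|, \theta_{\tilde{y}}) = (|y|, \theta_y + \theta)$, so that the arclength element is preserved and
\begin{equation*}
\mathcal{C}_{m,\omega}(e^{i\theta} x) = J_m(\omega |x|) e^{im(\theta_x + \theta)} = e^{im\theta}\, \mathcal{C}_{m,\omega}(x).
\end{equation*}
Applying this to the defining integral \eqref{scatcoef} immediately gives
\begin{equation*}
W_{n,m}[B^\theta,\sigma^\theta,\mu^\theta,\omega] = \int_{\partial B} e^{-in\theta}\,\overline{\mathcal{C}_n(y)}\,\tilde{\phi}_m(e^{i\theta} y)\, dS_y,
\end{equation*}
so the task reduces to showing $\tilde{\phi}_m(e^{i\theta} w) = e^{im\theta}\phi_m(w)$ for $w \in B$.

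To establish this, I would check, exactly as in the previous two propositions, that if one defines $(\tilde{\phi}_m(\tilde{w}), \tilde{\psi}_m(\tilde{w})) = e^{im\theta}(\phi_m(w), \psi_m(w))$, then this pair solves the system $\tilde{\mathcal{A}}[\tilde{\phi}_m,\tilde{\psi}_m]^T = [\mathcal{C}_m, \partial_\nu \mathcal{C}_m]^T$ on $\partial B^\theta$. The right-hand side picks up the factor $e^{im\theta}$ from the rotation formula for $\mathcal{C}_m$ (and the chain rule applied to the normal derivative, noting that the normal vector also rotates by $e^{i\theta}$, so the derivative scales by $1$, not $e^{-i\theta}$). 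On the operator side, because $\Gamma_\omega(e^{i\theta}(x-y)) = \Gamma_\omega(x-y)$ and the Jacobian of the rotation is $1$, one gets $S^\omega_{B^\theta}[\tilde{\phi}_m](\tilde{x}) = S^\omega_B[\tilde{\phi}_m(e^{i\theta}\cdot)](x)$ and the analogous identity for $(K^\omega_{B^\theta})^*$.

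The slightly delicate step — and the one I expect to be the main obstacle — is verifying that the NtD map transforms correctly, i.e., $\mathcal{N}_{B^\theta,\sigma^\theta,\mu^\theta,\omega}[\tilde{\psi}_m](\tilde{x}) = \mathcal{N}_{B,\sigma,\mu,\omega}[\tilde{\psi}_m(e^{i\theta}\cdot)](x)$. I would argue this through the uniqueness of the Neumann function, exactly as in \eqref{neumann 1}--\eqref{neumann 2}: the Neumann problem for $\tilde{N}_{\sigma^\theta,\mu^\theta,\omega}(\tilde{w},\tilde{y})$ on $B^\theta$, after the change of variables $\tilde{y} = e^{i\theta}y$ and using that $\nabla_{\tilde{y}} = e^{-i\theta}\nabla_y$ (hence $|\nabla_{\tilde{y}} \cdot|^2$ is invariant), $\delta_{\tilde{w}}(\tilde{y}) = \delta_w(y)$, $\sigma^\theta(\tilde{y}) = \sigma(y)$, and $\mu^\theta(\tilde{y}) = \mu(y)$, becomes exactly the Neumann problem defining $N_{\sigma,\mu,\omega}(w,y)$ on $B$; uniqueness then forces $\tilde{N}_{\sigma^\theta,\mu^\theta,\omega}(e^{i\theta}w, e^{i\theta}y) = N_{\sigma,\mu,\omega}(w,y)$.

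Once these three operator identities are in place, linearity of $\mathcal{A}$ and the uniqueness granted by Lemma \ref{lem1a} yield $\tilde{\phi}_m(e^{i\theta}w) = e^{im\theta}\phi_m(w)$. Substituting back into the integral for the scattering coefficient produces
\begin{equation*}
W_{n,m}[B^\theta,\sigma^\theta,\mu^\theta,\omega] = e^{i(m-n)\theta}\int_{\partial B}\overline{\mathcal{C}_n(y)}\phi_m(y)\,dS_y = e^{i(m-n)\theta} W_{n,m}[B,\sigma,\mu,\omega],
\end{equation*}
which is \eqref{rotation}.
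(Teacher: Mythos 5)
Your proposal is correct and follows essentially the same route as the paper: rotate the integration variable, use $\mathcal{C}_m(e^{i\theta}x)=e^{im\theta}\mathcal{C}_m(x)$, verify the three operator identities (for $S^{\omega}_B$, $(K^{\omega}_B)^{*}$, and the NtD map via uniqueness of the Neumann function under the rotational change of variables), and conclude $\tilde{\phi}_m(e^{i\theta}w)=e^{im\theta}\phi_m(w)$ by linearity and the unique solvability from Lemma \ref{lem1a}. The paper's proof is structured identically, so no further comparison is needed.
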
 
\begin{proof}
Let $\tilde{\phi}_m=\phi_{U_m, B^{\theta}}$ and $\tilde{y}= e^{i \theta }y$ with $y \in B$. Since $(|\tilde{y}|, \theta_{\tilde{y}})=(|y|, \theta_{y} + \theta)$, by the definition of the scattering coefficients:
\begin{equation}
\begin{aligned}
W_{n,m} [B^{\theta},\sigma,\mu,\omega] & = \int_{\partial B^{\theta}} \overline{\mathcal{C}_n(\tilde{y})} \tilde{\phi}_m(\tilde{y}) \; dS_{\tilde{y}} = \\ & = \int_{\partial B} \overline{\mathcal{C}_n(y)} e^{-in{\theta}} \tilde{\phi}_m(e^{i\theta} y) \; dS_y.
\label{scatcoefrot}
\end{aligned}
\end{equation}
We have
\begin{equation*}
\begin{aligned}
\mathcal{C}_m(\tilde{x}) = \mathcal{C}_m(e^{i\theta}x) & = \mathcal{C}_m(x) e^{im\theta},
\end{aligned}
\end{equation*}
where $\tilde{x}=e^{i \theta}x$ with $x \in B$. To find $\tilde{\phi}_m$, let us consider 
$$\tilde{\mathcal{A}} \begin{bmatrix} \tilde{\phi}_m \\ \tilde{\psi}_m \end{bmatrix} (\tilde{x})=\begin{bmatrix} -S^{\omega}_{B^{\theta}}[\tilde{\phi}_m](\tilde{x}) & \Lambda_{B^{\theta}, \sigma^{\theta}, \mu^{\theta}}[\tilde{\psi}_m](\tilde{x}) \\ -(\frac{1}{2} I + (K^{\omega}_{B^{\theta}})^{*})[\tilde{\phi}_m](\tilde{x}) & I [\tilde{\psi}_m](\tilde{x}) \end{bmatrix}.$$
Recall that $(\tilde{\phi}_{m}, \tilde{\psi}_{m}) \in L^2(\partial B^{\theta}) \times L^2 (\partial B^{\theta})$ is the solution to 
\begin{equation}
\tilde{\mathcal{A}} \begin{bmatrix} \tilde{\phi}_{m}\\ \tilde{\psi}_{m} \end{bmatrix} (\tilde{x}) = \begin{bmatrix} {\mathcal{C}}_{m}(\tilde{x}) \\ \frac{\partial {\mathcal{C}}_{m}}{\partial \nu_{\tilde{x}}}(\tilde{x}) \end{bmatrix}.
\label{sysrot1}
\end{equation}
Let $\tilde{w}= e^{i \theta}w$ for $w \in B$. Let us prove that 
\begin{equation}
\begin{aligned} \begin{bmatrix} \tilde{\phi}_m(\tilde{w}) \\ \tilde{\psi}_m(\tilde{w}) \end{bmatrix}  = e^{i m \theta} \begin{bmatrix} \phi_{m} (w) \\  \psi_{m} (w) \end{bmatrix}
\label{rotsol}
\end{aligned}
\end{equation}
is the solution. Since 
$$\begin{bmatrix} {\mathcal{C}}_{m}(\tilde{y}) \\ \frac{\partial {\mathcal{C}}_{m}}{\partial \nu_{\tilde{y}}}(\tilde{y}) \end{bmatrix} = \begin{bmatrix} {\mathcal{C}}_{m}(e^{i \theta} y) \\ \frac{\partial {\mathcal{C}}_{m}}{\partial \nu_{e^{i \theta} y}}(e^{i \theta} y) \end{bmatrix}=  e^{im\theta} \begin{bmatrix}  \mathcal{C}_m (y) \\ \frac{\partial \mathcal{C}_m}{\partial \nu_{y}}(y) \end{bmatrix}, $$
if we prove that
\begin{equation} 
\begin{aligned} 
\tilde{\mathcal{A}} \begin{bmatrix} \tilde{\phi}_{m} \\ \tilde{\psi}_{m} \end{bmatrix} (\tilde{x})= 
\begin{bmatrix} -S^{\omega}_B[ \tilde{\phi}_m(e^{i\theta} \; \cdot)] (x) & \Lambda_{B, \sigma, \mu}[ \tilde{\psi}_m(e^{i\theta} \; \cdot)] (x) \\ -(\frac{1}{2} I + (K^{\omega}_B)^{*}) [ \tilde{\phi}_m(e^{i\theta} \; \cdot)] (x)& I[ \tilde{\psi}_m(e^{i\theta} \; \cdot)]  (x) \end{bmatrix} ,  \label{sysrot}
\end{aligned}
\end{equation}
then by the linearity of operator $\mathcal{A}$ and the existence and uniqueness of a solution system \eqref{sysrot1}, we have \eqref{rotsol}. Let us prove identity \eqref{sysrot}. We write
\begin{equation*}
\begin{aligned}
S^{\omega}_{B^{\theta}}[\tilde{\phi}_m](\tilde{x}) & = \int_{B^{\theta}} \Gamma_{\omega} (\tilde{x}-\tilde{y}) \tilde{\phi}_m(\tilde{y}) \; dS_{\tilde{y}} = \\ & = \int_{B} \Gamma_{\omega} (x - y) \tilde{\phi}_m(e^{i\theta}y) s \; dS_y = \\ & =  S^{ \omega}_{B}[ \tilde{\phi}_m(e^{i\theta} \; \cdot)](x), 
\end{aligned}
\end{equation*}
$$ -\left(\frac{1}{2} I + (K^{\omega}_{B^{\theta}})^{*} \right)[\tilde{\phi}_m](\tilde{x}) = -\left(\frac{1}{2} I + (K^{\omega}_B)^{*} \right)[\tilde{\phi}_{m} (e^{i\theta} \; \cdot )](x),$$
\begin{equation*}
\begin{aligned}\mathcal{N}_{B^{\theta}, \tilde{\sigma}, \tilde{\mu}}[\tilde{\psi}_m](\tilde{x}) & =\int_{\partial B^{\theta}} \tilde{N}_{\tilde{\sigma}, \tilde{\mu}}(\tilde{x},\tilde{y}) \tilde{\psi}_m (\tilde{y}) \; dS_{\tilde{y}} = \\ & = \int_{\partial B} N_{\sigma, \mu}(x,y) \tilde{\psi}_m(e^{i\theta} \; \cdot) \; dS_{y} = \\ & = \mathcal{N}_{B, \sigma, \mu}[\tilde{\psi}_m(e^{i\theta} \; \cdot) ](x) ,
\end{aligned}
\end{equation*}
where $ \tilde{N}_{\sigma^{\theta}, \mu^{\theta}}(\tilde{x},\tilde{y}) = N_{\sigma, \mu} (x, y)$ follows from the existence and uniqueness of the Neumann function. In fact, for $\tilde{w} \in B^{\theta}$ and by the change of variables, the Neumann problem 
\begin{equation}
\begin{cases} \nabla_{\tilde{y}} \cdot \frac{1}{\sigma^{\theta}(\tilde{y})} \nabla_{\tilde{y}} \tilde{N}_{\sigma^{\theta}, \mu^{\theta}}(\tilde{w},\tilde{y}) + \omega^2 \mu^{\theta}(\tilde{y}) \tilde{N}_{\sigma^{\theta}, \mu^{\theta}}(\tilde{w},\tilde{y}) = - \delta_{\tilde{w}}(\tilde{y}) & \tilde{y} \in B^{\theta}, \\ \frac{1}{\sigma^{\theta}(\tilde{y})}\frac{\partial \tilde{N}_{\sigma^{\theta}, \mu^{\theta}}}{\partial_{\nu_{\tilde{y}}}} (\tilde{w}, \tilde{y})= 0 & \tilde{y} \in \partial B^{\theta},
\end{cases}
\label{neumann rot 1}
\end{equation}
can be rewritten as 
\begin{equation}
\begin{cases} \nabla_{y} \cdot \frac{1}{\sigma(y)} \nabla_{y} \tilde{N}_{\sigma^{\theta}, \mu^{\theta}}(e^{i\theta}w,e^{i\theta}y) + \omega^2 \mu(y) \tilde{N}_{\sigma^{\theta}, \mu^{\theta}}(e^{i\theta}w,e^{i\theta}y) = - \delta_{w}(y) & y \in B, \\ \frac{1}{\sigma(y)} \frac{\partial \tilde{N}_{\sigma^{\theta}, \mu^{\theta}}}{\partial_{\nu_{y}}} (e^{i\theta}w,e^{i\theta}y)= 0 & y \in \partial B.
\end{cases}
\label{neumann rot 2}
\end{equation}
By the uniqueness of the Neumann function, $\tilde{N}_{\sigma^{\theta}, \mu^{\theta}}(e^{i \theta} x,e^{i \theta} y) = N_{\sigma, \mu} (x,y)$ is the solution to \eqref{neumann scal 2}.

Then, \eqref{scatcoefrot} yields
\begin{equation*}
\begin{aligned}
W_{n,m} [B^{\theta},\sigma^{\theta},\mu^{\theta},\omega] = e^{i(m-n) \theta} W_{n,m} [B,\sigma,\mu, \omega]. 
\end{aligned}
\end{equation*}  
\end{proof}

\section{Distribution descriptors and identification in a dictionary} \label{sec5}

In this section, we construct the distribution descriptors which are invariant to rigid transformations. In the following, we proceed as in \cite{echolocation}. We denote by $B$ a reference shape of size $1$ centered at the origin, so that the unknown target $D$  is generated from $B$ by a rotation with angle $\theta$, a scaling $s>0$, and a translation $z \in \mathbb{R}^2$ as 
$$D=z+sR_{\theta}B.$$

\subsection{Far-field pattern} To construct the distribution descriptors which are invariant to rigid transformations, we derive the far-field pattern of the scattering field in terms of the inhomogeneous scattering coefficients. The result is similar to the homogeneous case \cite{cloaking}.

If $U$ is given by a plane wave $e^{i \omega \xi \cdot x}$ such that $\omega \xi \cdot \omega \xi = \omega^2$, then by the Jacobi-Anger decomposition, we have
$$U(x)=\sum_{m \in \mathbb{Z}} e^{i m (\frac{\pi}{2}-\theta_{\xi})}J_m(\omega |x|)e^{im\theta_x}=\sum_{m \in Z} e^{i m (\frac{\pi}{2}-\theta_{\xi})} \mathcal{C}_m(x).$$
By the linearity of operator $\mathcal{A}$ and existence and uniqueness of a solution to system \eqref{sysscal1}, we obtain
$$\phi = \sum_{m \in \mathbb{Z}} e^{i m (\frac{\pi}{2}-\theta_{\xi})} \phi_m.$$
Using \eqref{asympt4},
\begin{equation} 
\begin{aligned}
u-e^{i \omega \xi \cdot x} = (u-U)(x)=  -\frac{i}{4} \sum_{m \in \mathbb{Z}} e^{i m (\frac{\pi}{2}-\theta_{\xi})} \int_{\partial B} H_0^{(1)} (\omega |x-y|)  \phi_m(y) \; dS_y.
\label{far1}
\end{aligned}
\end{equation}
Recall that \cite{cloaking}
$$H_0^{(1)} (\omega |x-y|)= \sqrt{\frac{2}{\pi \omega |x-y|}} e^{i(\omega|x-y| - \frac{\pi}{4})} + O(|x|^{-1}) \mbox{ as } x \to \infty.$$
Since $|x-y|=|x|-|y|\cos(\theta_x-\theta_y)+O(|x|^{-1})$, \eqref{far1} becomes
\begin{equation} 
\begin{aligned}
u-e^{i \omega \xi \cdot x} = -i e^{-i \frac{\pi}{4}} \frac{e^{i \omega |x|}}{\sqrt{8 \pi \omega |x|}} \sum_{m \in \mathbb{Z}} e^{i m (\frac{\pi}{2}-\theta_{\xi})} \int_{\partial B} e^{-i\omega|y|\cos(\theta_x-\theta_y)} \phi_m(y) \; dS_y + O(|x|^{-1})   .
\label{far2}
\end{aligned}
\end{equation}
By Jacobi-Anger identity,
$$e^{-i\omega|y|\cos(\theta_x-\theta_y)} = e^{i\omega|y|\cos(\theta_x-\theta_y + \pi)} = \sum_{n \in \mathbb{Z}} i^{n} J_n(\omega |y|) e^ {i n (\theta_x - \theta_y + \pi)},$$
\eqref{far2} becomes
\begin{equation} 
\begin{aligned}
u-e^{i \omega \xi \cdot x} = -i e^{-i \frac{\pi}{4}} \frac{e^{i \omega |x|}}{\sqrt{8 \pi \omega |x|}} \sum_{m, n \in \mathbb{Z}} e^{i m (\frac{\pi}{2}-\theta_{\xi})} e^ {i n (\theta_x - \frac{\pi}{2})} \int_{\partial B} i^{n} J_n(\omega |y|) e^ {-i n (\theta_y - \frac{3}{2} \pi)} \phi_m(y) \; dS_y + O(|x|^{-1})   .
\label{far3}
\end{aligned}
\end{equation}
Since $i^n e^{in \frac{3 \pi}{2}} = i^n (-i)^n = 1$, we obtain
\begin{equation} 
\begin{aligned}
u-e^{i \omega \xi \cdot x} = -i e^{-i \frac{\pi}{4}} \frac{e^{i \omega |x|}}{\sqrt{8 \pi \omega |x|}} \sum_{m, n \in \mathbb{Z}} e^{i m (\frac{\pi}{2}-\theta_{\xi})} e^ {i n (\theta_x - \frac{\pi}{2})} W_{n,m} + O(|x|^{-1})   ,
\label{far4}
\end{aligned}
\end{equation}
where $W_{n,m}$ are the inhomogeneous scattering coefficients. 

We define the far-field pattern (the scattering amplitude) when the incident field is the plane wave $U(x)=e^{i \omega \xi \cdot x}$, $|\xi|=1$, as the two-dimensional $2 \pi$-periodic function 
$$A^{\infty}_B((\theta_{\xi}, \theta_x)^T; \sigma, \mu, \omega)= \sum_{m \in \mathbb{Z}} e^{i m (\frac{\pi}{2}-\theta_{\xi})} \int_{\partial B} e^{-i\omega|y|\cos(\theta_x-\theta_y)} \phi_m(y) \; dS_y.$$
From \eqref{far4}, we get the following proposition.
\begin{prop}
Let $(\theta_{\xi},\theta_x)^T \in [0, 2 \pi]^2$. Then, we have
\begin{equation}
\begin{aligned}
A^{\infty}_B((\theta_{\xi}, \theta_x)^T; \sigma, \mu, \omega) = \sum_{m, n \in \mathbb{Z}} e^{i m (\frac{\pi}{2}-\theta_{\xi})} e^ {i n (\theta_x - \frac{\pi}{2})} W_{n,m}[B,\sigma,\mu,\omega].
\end{aligned}
\label{far-field}
\end{equation}
\end{prop}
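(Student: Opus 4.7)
The plan is to start from the definition of $A^{\infty}_B$ and apply the Jacobi--Anger identity to the kernel $e^{-i\omega|y|\cos(\theta_x-\theta_y)}$ inside the integral, so that the boundary integrals produce precisely the scattering coefficients $W_{n,m}$ from Definition (scatcoef). The computation is essentially the one already performed in equations \eqref{far3}--\eqref{far4}; the proposition just packages it.

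First, I would rewrite the cosine argument using the shift by $\pi$ in order to match the standard Jacobi--Anger expansion: $e^{-i\omega|y|\cos(\theta_x - \theta_y)} = e^{i\omega|y|\cos(\theta_x - \theta_y + \pi)}$, and then expand
\begin{equation*}
e^{i\omega|y|\cos(\theta_x - \theta_y + \pi)} \;=\; \sum_{n \in \mathbb{Z}} i^{n}\, J_n(\omega|y|)\, e^{i n (\theta_x - \theta_y + \pi)}.
\end{equation*}
Inserting this into the defining expression for $A^{\infty}_B$ and interchanging summation and integration (justified by the exponential decay of $\phi_m$ in $m$ established in Proposition on decay, together with the analogous decay of $J_n(\omega|y|)$ uniformly in $y \in \partial B$), I obtain a double sum whose $y$-integrals reduce to
\begin{equation*}
\int_{\partial B} J_n(\omega|y|)\, e^{-i n \theta_y}\, \phi_m(y)\, dS_y \;=\; \int_{\partial B} \overline{\mathcal{C}_n(y)}\, \phi_m(y)\, dS_y \;=\; W_{n,m}[B,\sigma,\mu,\omega],
\end{equation*}
by the definition of $\mathcal{C}_n$ in \eqref{cylindrical} and of the scattering coefficients in \eqref{scatcoef}.

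Finally I would collect the remaining $y$-independent phase factors. Pulling out $i^n e^{i n \theta_x} e^{i n \pi}$ from the Jacobi--Anger expansion and using the identity $i^{n}\, e^{i n \pi} = e^{i n \pi/2}\, e^{i n \pi} = e^{i n (3\pi/2)} = e^{-i n \pi/2}$, this phase combines into $e^{i n (\theta_x - \pi/2)}$, which multiplies the $e^{i m (\pi/2 - \theta_\xi)}$ factor already present from the outer sum. The result is exactly \eqref{far-field}.

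The main (and essentially only) subtle point is the justification of the Fubini-style interchange of the $n$-sum with the boundary integral, and then the absolute convergence of the resulting double series in $(n,m)$. Both follow from the decay estimate $\|\phi_m\|_{L^2(\partial B)} \leq K^{|m|}/|m|^{|m|}$ established in the proof of the decay proposition, together with the analogous bound on $\|\overline{\mathcal{C}_n}\|_{L^2(\partial B)}$, since the product yields a doubly summable majorant. Once this is in place, the identity is a direct computation.
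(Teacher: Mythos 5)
Your proposal is correct and follows essentially the same route as the paper: the paper's own proof is precisely the computation in \eqref{far3}--\eqref{far4} (Jacobi--Anger expansion of the kernel $e^{-i\omega|y|\cos(\theta_x-\theta_y)}$, identification of $\int_{\partial B}\overline{\mathcal{C}_n}\,\phi_m\,dS_y$ with $W_{n,m}$, and the phase bookkeeping via $i^n e^{in\,3\pi/2}=1$). Your additional justification of the sum--integral interchange via the decay estimates on $\|\phi_m\|_{L^2(\partial B)}$ and $\|\mathcal{C}_n\|_{L^2(\partial B)}$ is a welcome rigor point the paper leaves implicit.
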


\subsection{Translation- and rotation-invariant distribution descriptors}

A simple relation exists between far-field patterns of $D=z+sR_{\theta}B$ and $B$. The following result generalizes to the inhomogeneous case the result proved in \cite{echolocation} in the case of homegenous scattering coefficients.
\begin{prop}
Let $D=z+sR_{\theta}B$. We denote by $\theta_z$ the angle of $z$ in polar coordinates, and define 
$$\phi_z((\theta_{\xi}, \theta_x)^T) :=e^{i\omega|z|\cos(\theta_{\xi} - \theta_z) } e^{-i \omega |z| \cos (\theta_x - \theta_z)}. $$
Then, we have
$$A^{\infty}_D((\theta_{\xi}, \theta_x)^T; \sigma, \mu, \omega) = \phi_z ((\theta_{\xi}, \theta_x)^T) A^{\infty}_B((\theta_{\xi} - \theta, \theta_x - \theta)^T, \sigma, \mu, s \omega) .$$ 
\end{prop}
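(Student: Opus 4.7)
The plan is to combine the far-field identity of the previous proposition with the three transformation formulas (translation, scaling, rotation) from Section~\ref{sec4} and a Jacobi--Anger expansion to convert a sum over scattering coefficients of $D$ into one involving scattering coefficients of $B$.

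First, by the preceding proposition,
$$A^{\infty}_D((\theta_\xi, \theta_x)^T; \sigma, \mu, \omega) = \sum_{n,m \in \mathbb{Z}} e^{im(\pi/2 - \theta_\xi)}\, e^{in(\theta_x - \pi/2)}\, W_{n,m}[D,\sigma,\mu,\omega],$$
where the $\sigma,\mu$ on the left are implicitly the translated/scaled/rotated pushforwards of the reference $\sigma,\mu$ on $B$. Writing $D = ((R_\theta B)^s)^z$, I would chain the three transformation formulas (Propositions on translation, scaling, and rotation), in that order, to obtain
$$W_{n,m}[D,\sigma,\mu,\omega] = \sum_{a,b \in \mathbb{Z}} \overline{\mathcal{C}_a(z)}\, \mathcal{C}_b(z)\, e^{i((m-b)-(n-a))\theta}\, W_{n-a,\,m-b}[B,\sigma,\mu,s\omega].$$

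Next, I would substitute this expression into the far-field sum and reindex by $n' = n-a$, $m' = m-b$. The exponential factors separate cleanly: collecting all $m'$-dependent phases gives $e^{im'(\pi/2 - (\theta_\xi - \theta))}$, and all $n'$-dependent phases give $e^{in'((\theta_x - \theta) - \pi/2)}$, so the inner double sum reassembles into $A^{\infty}_B((\theta_\xi - \theta, \theta_x - \theta)^T; \sigma, \mu, s\omega)$ by the previous proposition applied to $B$ at frequency $s\omega$. What remains outside is the product of two single sums in $a$ and $b$, which using $\mathcal{C}_a(z) = J_a(\omega|z|)\, e^{ia\theta_z}$ read
$$\Bigl(\sum_{b} J_b(\omega|z|)\, e^{ib(\pi/2 - \theta_\xi + \theta_z)}\Bigr) \Bigl(\sum_{a} J_a(\omega|z|)\, e^{ia(\theta_x - \pi/2 - \theta_z)}\Bigr).$$

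Finally, the Jacobi--Anger identity in the form $\sum_{n \in \mathbb{Z}} J_n(r)\, e^{in\alpha} = e^{ir \sin\alpha}$, together with $\sin(\pi/2 - \theta_\xi + \theta_z) = \cos(\theta_\xi - \theta_z)$ and $\sin(\theta_x - \pi/2 - \theta_z) = -\cos(\theta_x - \theta_z)$, collapses these two sums to $e^{i\omega|z|\cos(\theta_\xi - \theta_z)}$ and $e^{-i\omega|z|\cos(\theta_x - \theta_z)}$, whose product is exactly $\phi_z((\theta_\xi,\theta_x)^T)$.

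The only genuinely delicate step is the bookkeeping at the merge point: chaining the translation/scaling/rotation formulas produces a combined phase $\overline{\mathcal{C}_a(z)} \mathcal{C}_b(z) e^{i((m-b)-(n-a))\theta}$, and one must align the surviving exponents in $a,b$ with the precise shift $\pm \pi/2$ needed to invoke Jacobi--Anger in the $\sin$-form. Once the signs and the $\pi/2$ shifts are handled correctly, the identity follows directly without further analytic input.
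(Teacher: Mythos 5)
Your proposal is correct and follows essentially the same route as the paper: chain the translation, scaling, and rotation formulas to express $W_{n,m}[D,\sigma,\mu,\omega]$ in terms of $W_{n-a,m-b}[B,\sigma,\mu,s\omega]$, substitute into the far-field sum, reindex, and collapse the residual sums in $a,b$ via the Jacobi--Anger identity to produce $\phi_z$. The sign and $\pi/2$-shift bookkeeping you flag as the delicate point is handled correctly and matches the paper's computation.
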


\begin{proof}
By transformation formulas \eqref{translation}, \eqref{scaling}, and \eqref{rotation}, we have 
\begin{equation}
\begin{aligned}
W_{n,m}[D,\sigma,\mu, \omega] & = \sum_{a,b \in \mathbb{Z}} \overline{\mathcal{C}_a(z)} \mathcal{C}_b(z) W_{n-a,m-b}[sR_{\theta}B, \sigma, \mu, \omega] = \\ & =  \sum_{a,b \in \mathbb{Z}} \overline{\mathcal{C}_a(z)} \mathcal{C}_b(z) W_{n-a,m-b}[R_{\theta}B, \sigma, \mu, s\omega] = \\ & =
\sum_{a,b \in \mathbb{Z}} \overline{\mathcal{C}_a(z)} \mathcal{C}_b(z) e^{i(m-b) \theta} e^{-i(n-a) \theta} W_{n-a,m-b}[B, \sigma, \mu, s \omega].
\end{aligned}
\end{equation}
Therefore, using the Jacobi-Anger identity, 
\begin{equation*}
\begin{aligned}
e^{i\omega|z|\cos(\theta_{\xi}-\theta_z)} & = e^{i\omega|z|\cos(\theta_z - \theta_{\xi})} = \sum_{a \in \mathbb{Z}} i^{a} J_a(\omega |z|) e^ {i a (\theta_z - \theta_{\xi})} = \sum_{a \in \mathbb{Z}} J_a(\omega |z|) e^ {i a (\frac{\pi}{2} - (\theta_{\xi} - \theta_z))} = \\ & = \sum_{a \in \mathbb{Z}} \mathcal{C}_{a}(z) e^{ia(\frac{\pi}{2} - \theta_{\xi})},
\end{aligned}
\end{equation*}
we obtain
\begin{equation*}
\begin{aligned}
& A^{\infty}_D((\theta_{\xi}, \theta_x)^T; \sigma, \mu, \omega)  = \sum_{m, n \in \mathbb{Z}} e^{i m (\frac{\pi}{2}-\theta_{\xi})} e^ {i n (\theta_x - \frac{\pi}{2})} W_{n,m}[D,\sigma,\mu,\omega] = \\ & = \sum_{m, n, a, b \in \mathbb{Z}} e^{i m (\frac{\pi}{2}-\theta_{\xi})} e^ {i n (\theta_x - \frac{\pi}{2})} \overline{\mathcal{C}_a(z)} \mathcal{C}_b(z) e^{i(m-b) \theta} e^{-i(n-a) \theta} W_{n-a,m-b}[B, \sigma, \mu, s \omega] = \\ & = \sum_{m, n, a, b \in \mathbb{Z}}  \overline{\mathcal{C}_a(z)} e^{-ia(\frac{\pi}{2}-\theta_{x})} e^{ ia(\frac{\pi}{2}-\theta_{x})} \mathcal{C}_b(z) e^{i m (\frac{\pi}{2}-\theta_{\xi})} e^ {i n (\theta_x - \frac{\pi}{2})} e^{i(m-b) \theta} e^{-i(n-a) \theta} W_{n-a,m-b}[B, \sigma, \mu, s \omega] = \\ & = \sum_{m, n, a, b \in \mathbb{Z}}  \overline{\mathcal{C}_a(z)} e^{-ia(\frac{\pi}{2}-\theta_{x})} \mathcal{C}_b(z) e^{i m(\frac{\pi}{2}-\theta_{\xi})} e^ {i (n-a) (\theta_x - \frac{\pi}{2})} e^{i(m-b) \theta} e^{-i(n-a) \theta} W_{n-a,m-b}[B, \sigma, \mu, s \omega] = \\ & = \sum_{m, n, a, b \in \mathbb{Z}}  \overline{\mathcal{C}_a(z)} e^{-ia(\frac{\pi}{2}-\theta_{x})} \mathcal{C}_b(z) e^ {-i b (\theta_{\xi} - \frac{\pi}{2})} e^{i (m-b) (\frac{\pi}{2}-\theta_{\xi})} e^ {i (n-a) (\theta_x - \frac{\pi}{2})} e^{i(m-b) \theta} e^{-i(n-a) \theta} W_{n-a,m-b}[B, \sigma, \mu, s \omega] = \\ & = \sum_{m', n', a, b \in \mathbb{Z}}  \overline{\mathcal{C}_a(z)} e^{-ia(\frac{\pi}{2}-\theta_{x})} \mathcal{C}_b(z) e^ {-i b (\theta_{\xi} - \frac{\pi}{2})} e^{i m' (\frac{\pi}{2}-\theta_{\xi})} e^ {i n' (\theta_x - \frac{\pi}{2})} e^{im' \theta} e^{-in' \theta} W_{n',m'}[B, \sigma, \mu, s \omega] = \\ & = \sum_{m', n' \in \mathbb{Z}} e^{i \omega |z| \cos(\theta_{\xi}-\theta_z)} e^{-i \omega |z| \cos(\theta_{x}-\theta_z)} e^{i m' (\frac{\pi}{2}-\theta_{\xi})} e^ {i n' (\theta_x - \frac{\pi}{2})} e^{i(m'-n') \theta} W_{n',m'}[B, \sigma, \mu, s \omega] = \\ & = \sum_{m', n' \in \mathbb{Z}} \phi_z((\theta_{\xi}, \theta_x)^T) e^{i m' (\frac{\pi}{2}-\theta_{\xi})} e^ {i n' (\theta_x - \frac{\pi}{2})} W_{n',m'}[B^{\theta}, \sigma, \mu, s \omega] = \\ & = \phi_z ((\theta_{\xi}, \theta_x)^T) A^{\infty}_B((\theta_{\xi} - \theta, \theta_x - \theta)^T, \sigma, \mu, s \omega).
\end{aligned}
\end{equation*} 

\end{proof}

In the following, we introduce the descriptor construction based on the far-field pattern. We proceed as in \cite{echolocation}. Given $\eta = (\theta_{\xi},\theta_x)^T$, we define the frequency-dependent \textit{distribution descriptor} of an inhomogeneous object $D$ as follows:

\begin{equation}
S_D(v; \sigma, \mu, \omega) := \int_{[0,2\pi]^2} |A^{\infty}_D (\eta; \sigma, \mu, \omega) A^{\infty}_D (\eta-v; \sigma, \mu, \omega)| \; d\eta . 
\end{equation}

The distribution descriptor $S_D$ is invariant to any translation and rotation. More precisely, we can prove the following identity. 

\begin{prop}
Let $D=z+sR_{\theta}B$. We have
$$S_D(v; \sigma, \mu, \omega) = S_B(v; \sigma, \mu, s \omega).$$
\end{prop}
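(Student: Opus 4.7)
The plan is to combine the transformation law for the far-field pattern established in the previous proposition with the observation that the prefactor $\phi_z$ has unit modulus, and then absorb the rotation by a change of variables that exploits $2\pi$-periodicity.

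First, I would invoke the preceding proposition to substitute
$$A^{\infty}_D(\eta; \sigma, \mu, \omega) = \phi_z(\eta)\, A^{\infty}_B\bigl(\eta - (\theta,\theta)^T;\, \sigma, \mu, s\omega\bigr),$$
and likewise with $\eta$ replaced by $\eta-v$. The key observation is that $\phi_z(\eta) = e^{i\omega|z|\cos(\theta_\xi-\theta_z)}e^{-i\omega|z|\cos(\theta_x-\theta_z)}$ is a product of two unit-modulus complex exponentials, so $|\phi_z(\eta)| = |\phi_z(\eta-v)| = 1$. Consequently, taking absolute values in the definition of $S_D$ eliminates both prefactors entirely, yielding
$$S_D(v;\sigma,\mu,\omega) = \int_{[0,2\pi]^2} \bigl|A^{\infty}_B\bigl(\eta - (\theta,\theta)^T; \sigma, \mu, s\omega\bigr)\, A^{\infty}_B\bigl(\eta - v - (\theta,\theta)^T; \sigma, \mu, s\omega\bigr)\bigr|\, d\eta.$$

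Next I would perform the change of variables $\eta' = \eta - (\theta,\theta)^T$. Although this shifts the domain of integration to $[-\theta, 2\pi-\theta]^2$, the integrand is the restriction to $[0,2\pi]^2$ of a $(2\pi,2\pi)$-periodic function of $\eta'$ (since $A^{\infty}_B$ is $2\pi$-periodic in each angular variable, as noted in Section~5.1), so the integral over any translated unit cell of the period lattice agrees with the integral over $[0,2\pi]^2$. This gives
$$S_D(v;\sigma,\mu,\omega) = \int_{[0,2\pi]^2} \bigl|A^{\infty}_B(\eta'; \sigma, \mu, s\omega)\, A^{\infty}_B(\eta' - v; \sigma, \mu, s\omega)\bigr|\, d\eta' = S_B(v;\sigma,\mu,s\omega),$$
which is the desired identity.

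There is no serious obstacle here; the entire content of the result is already encoded in the transformation formula for $A^\infty$. The only point requiring minor care is the justification of the translation invariance of the $\eta$-integral, which rests on the $2\pi$-periodicity of the far-field pattern in both angular arguments, a fact already used implicitly in defining $S_D$ as an integral over the torus $[0,2\pi]^2$.
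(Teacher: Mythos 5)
Your proof is correct and follows essentially the same route as the paper: substitute the transformation law $A^{\infty}_D(\eta)=\phi_z(\eta)A^{\infty}_B(\eta-(\theta,\theta)^T;\sigma,\mu,s\omega)$, discard the unit-modulus prefactors under the absolute value, and shift the integration variable by $(\theta,\theta)^T$ using the $2\pi$-periodicity of the far-field pattern. No differences worth noting.
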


\begin{proof}
Given $\eta= (\eta_1, \eta_2)$, $\theta= (\theta, \theta)$, by $|\phi_z ( \cdot )|= 1$ and
\begin{equation*}
\begin{aligned}
\int_{[0,2 \pi]^2} A^{\infty}_B(\eta - \theta; \sigma, \mu, s\omega) \; d \eta & = \int_{[0,2 \pi]^2} \sum_{m, n \in \mathbb{Z}} e^{i m (\frac{\pi}{2}-(\eta_1 - \theta))} e^ {i n ((\eta_2 - \theta) - \frac{\pi}{2})} W_{n,m}[B,\sigma,\mu,s\omega] \; d \eta = \\ & = \int_{[-\theta,2 \pi-\theta]^2} \sum_{m, n \in \mathbb{Z}} e^{i m (\frac{\pi}{2}-\eta'_1)} e^ {i n (\eta'_2 - \frac{\pi}{2})} W_{n,m}[B,\sigma,\mu,s\omega] \; d \eta' = \\ & = \int_{[0,2 \pi]^2} \sum_{m, n \in \mathbb{Z}} e^{i m (\frac{\pi}{2}-\eta'_1)} e^ {i n (\eta'_2 - \frac{\pi}{2})} W_{n,m}[B,\sigma,\mu,s\omega] \; d \eta' = \\ & = \int_{[0,2 \pi]^2} A^{\infty}_B(\eta'; \sigma, \mu, s\omega) \; d \eta',
\end{aligned}
\end{equation*}
we have
\begin{equation*}
\begin{aligned}
& S_D(v; \sigma, \mu, \omega) = \int_{[0,2\pi]^2} |A^{\infty}_D (\eta; \sigma, \mu, \omega) A^{\infty}_D (\eta-v; \sigma, \mu, \omega)| \; d\eta = \\ & = \int_{[0,2\pi]^2} |\phi_z (\eta) A^{\infty}_B(\eta - (\theta, \theta), \sigma, \mu, s \omega) \phi_z (\eta - v) A^{\infty}_B (\eta- v -(\theta, \theta); \sigma, \mu, s \omega)| \; d\eta = \\ & = \int_{[0,2\pi]^2} | A^{\infty}_B(\eta - (\theta, \theta), \sigma, \mu, s \omega) A^{\infty}_B (\eta-(\theta, \theta) - v; \sigma, \mu, s \omega)| \; d\eta = \\ & = \int_{[0,2\pi]^2} | A^{\infty}_B(\eta', \sigma, \mu, s \omega) A^{\infty}_B (\eta' - v; \sigma, \mu, s \omega)| \; d\eta' = \\ & = S_B(v, \sigma, \mu, s \omega).    
\end{aligned}
\end{equation*}

\end{proof}

\section{Numerical experiments} \label{sec6}

In this section, we present a variety of numerical results in order to demonstrate the applicability of the theoretical framework presented in the previous sections. In particular, we investigate the identification of a target by reconstructing inhomogeneous scattering coefficients from the measurements of the multistatic response (MSR) matrix. In the multistatic configuration, directions of incidence and observations are sampled. For each incident direction, the scattered wave is measured in all the observation directions \cite{math and stat method}.  The overall procedure is similar to the one of \cite{echolocation} for the homogeneous case. In the following, we consider the case of piecewise constant (inhomogeneous) material parameters. For a collection of (inhomogeneous) targets, based on the code developed in \cite{code} for homogeneous targets, we build a frequency dependent dictionary of distribution descriptors and use a target identification algorithm like the one of \cite{echolocation} in order to identify an inhomogeneous target from the dictionary up to some translation, rotation and scaling. Our dictionary will include three kinds of objects: 

\begin{itemize}
	\item \textit{Homogeneous targets}, i.e. a disk, a triangle, \textit{etc.}
	\item \textit{Inhomogeneous targets with one inclusion inside}, i.e. a circular inclusion inside a circular target, \textit{etc.}
	\item \textit{Inhomogeneous targets with two (distinct) inclusions inside}, i.e. a circular inclusion and a square inside a circular target, \textit{etc.}
\end{itemize}
Note that these inclusions have different material parameters than the ones of target and the background. In the following, we use the results of Section \ref{sec4} for a suitable integral representation of the solutions for the case of an inhomogeneous target with one inclusion inside and the case of an inhomogeneous object with two inclusions inside (see Appendix \ref{secappendix}). The case of a homogeneous target is taken into account in \cite{echolocation}. Finally, we perform numerical experiments in order to test the performance of the inhomogeneous scattering coefficients in inhomogeneous target identification. 

Given a target $D$, we proceed as follows:

\begin{itemize}
	\item \textit{Data simulation}. The MSR matrix is simulated for a frequency range $[\omega_{\text{min}}, \omega_{\text{max}}]$ by evaluating the integral representation \eqref{solrep}, where the densities are computed by solving \eqref{scatsys}. We adopt a circular acquisition system (a full-view acquisition).
	\item \textit{Reconstruction of scattering coefficients}. For each frequency, we reconstruct the matrix $\textbf{W}=(W_{mn}[D^{-z_0}])_{mn}$ of scattering coefficients directly by the formula (3.32) of \cite{echolocation}.
	\item \textit{Target identification}. We calculate the distribution descriptors and use a target identification algorithm, see \cite{echolocation}.
\end{itemize}

\subsection{Dictionary}
The dictionary $\mathcal{D}$ that we consider is composed by 14 targets with different material parameters: 

\begin{itemize}

\item 6 elements of the dictionary are homogeneous targets: a disk, an ellipse, a triangle, a square, a rectangle, and the letter A (see Figure \ref{fig:dico}). All homogeneous targets share the same permittivity $\mu=3$ and permeability $\sigma=3$.

\item 5 elements of the dictionary are inhomogeneous targets with a single inclusion inside: a disk with a circular inclusion inside, a disk with an ellipse inside, a disk with a triangular inclusion inside, a disk with a square inside, and a disk with a rectangular inclusion inside (see Figure \ref{fig:dico}). Note that these 5 targets share the same permittivity ($\mu_e=3$) and permeability ($\sigma_e=3$) for the exterior domain, while all inclusions have permittivity $\mu_i=6$ and permeability $\sigma_i=6$. 

\item 3 elements of the dictionary are (inhomogeneous) disks with two distinct inclusions inside: two circular inclusions for the first disk, a circle and an ellipse for the second disk, and two distinct ellipses for the third one (see Figure \ref{fig:dico}). These 3 targets share the same permittivity ($\mu_e=3$) and permeability ($\sigma_e=3$) for the exterior domain, while the two distinct inclusions have permittivity $\mu_i=6$ and permeability $\sigma_i=6$, $i=1,2$. 
\end{itemize}

\begin{figure}[H]
	\centering
	\begin{tabular}{@{}c@{\hspace{0.75cm}}c@{\hspace{0.75cm}}c@{\hspace{0.75cm}}c@{\hspace{0.75cm}}c@{\hspace{0.75cm}}c}
		& & & & & \\
		\quad & \raisebox{7ex - \height}{\includegraphics[ scale=0.25]{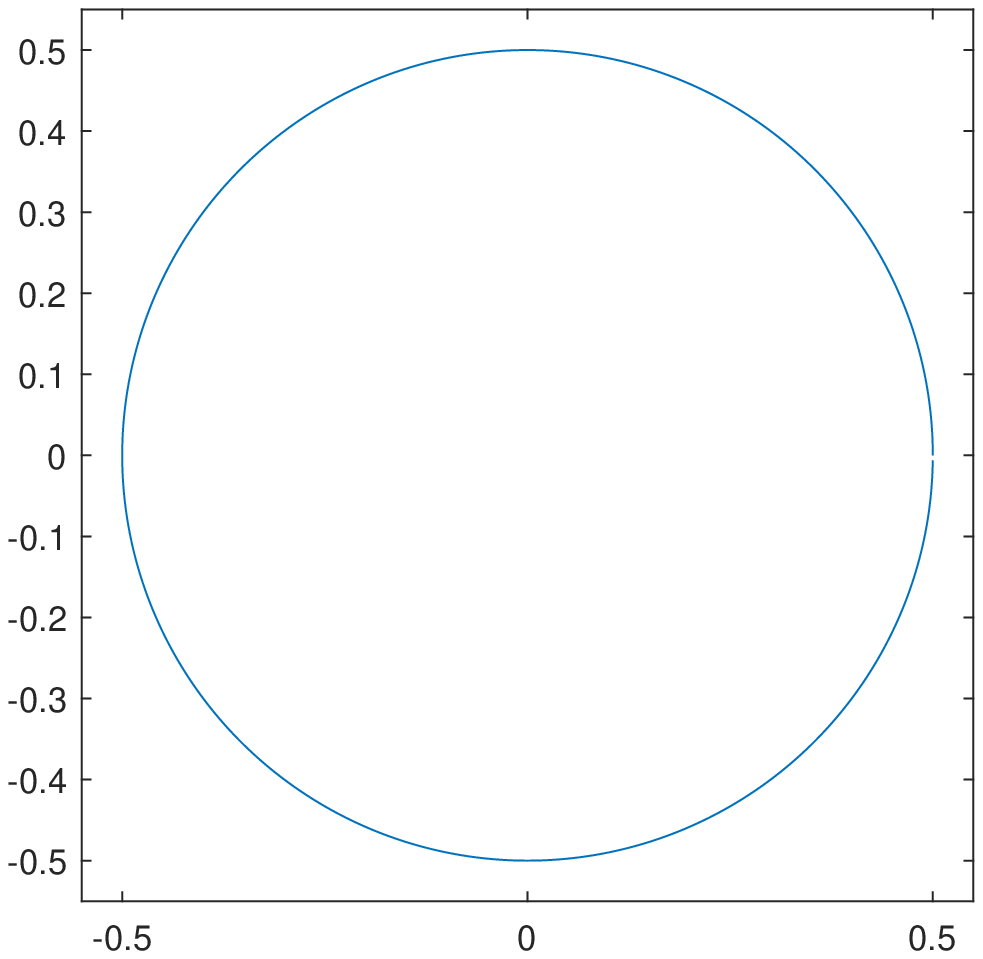}} &
		\raisebox{7ex - \height}{\includegraphics[ scale=0.25]{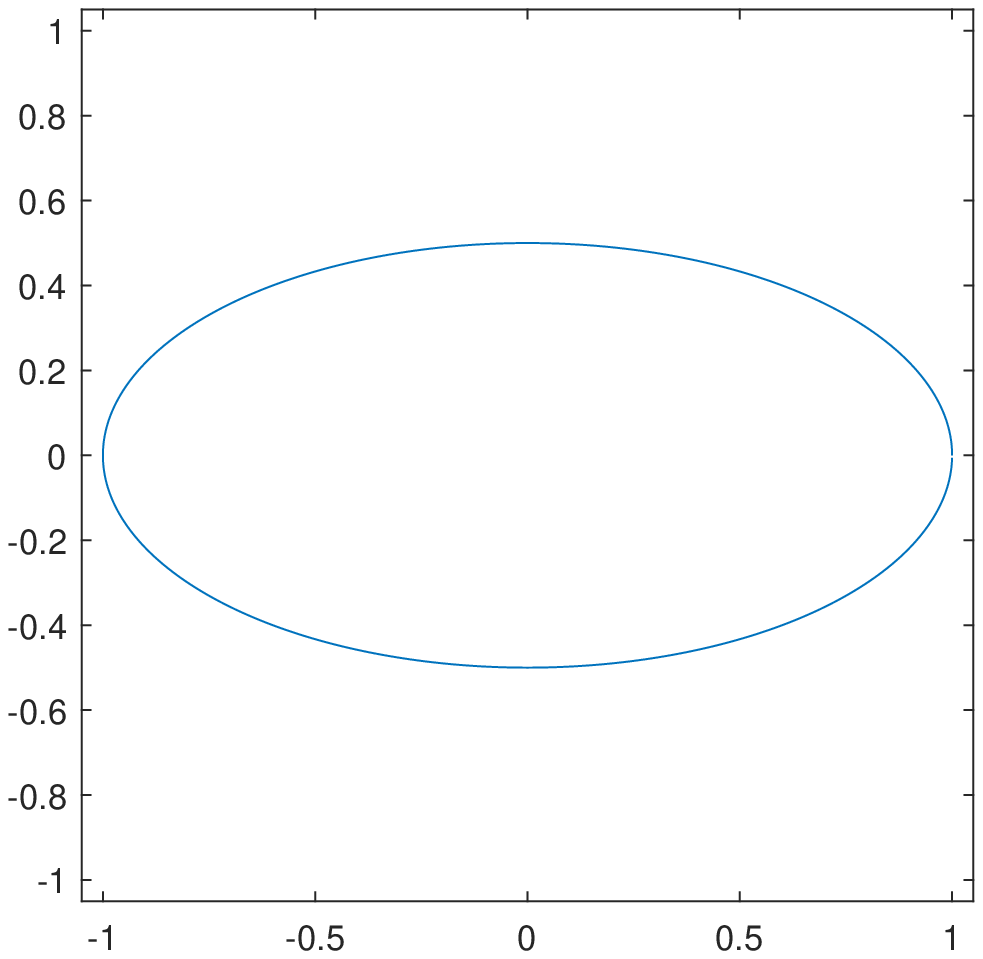}} &
		\raisebox{7ex - \height}{\includegraphics[ scale=0.25]{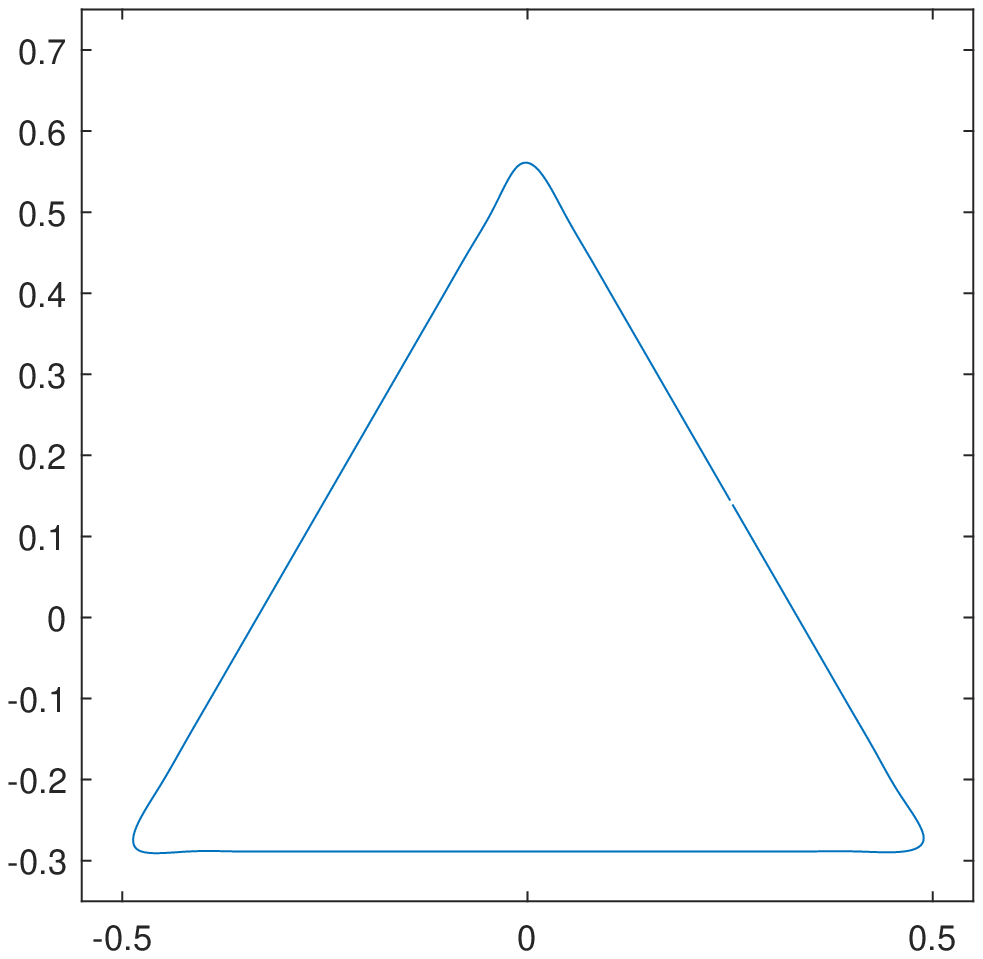}} & \raisebox{7ex - \height}{\includegraphics[ scale=0.25]{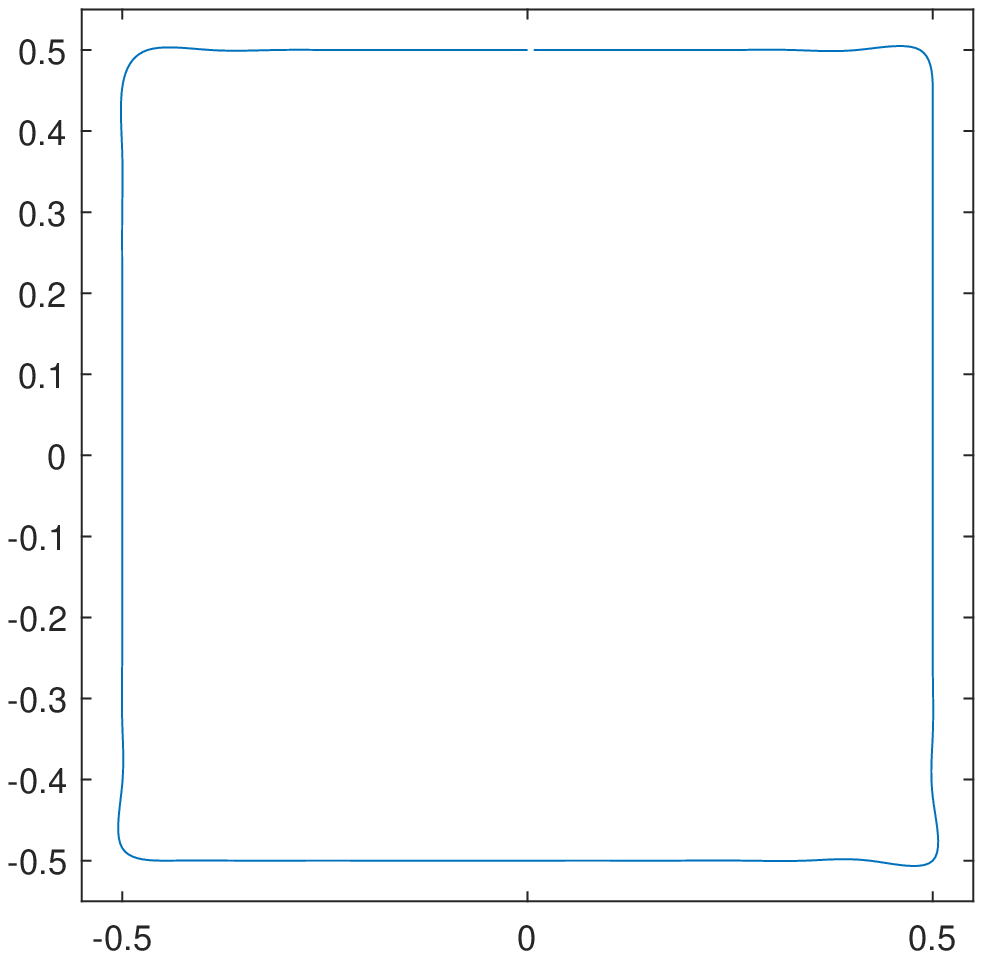}} &
		\raisebox{7ex - \height}{\includegraphics[scale=0.25]{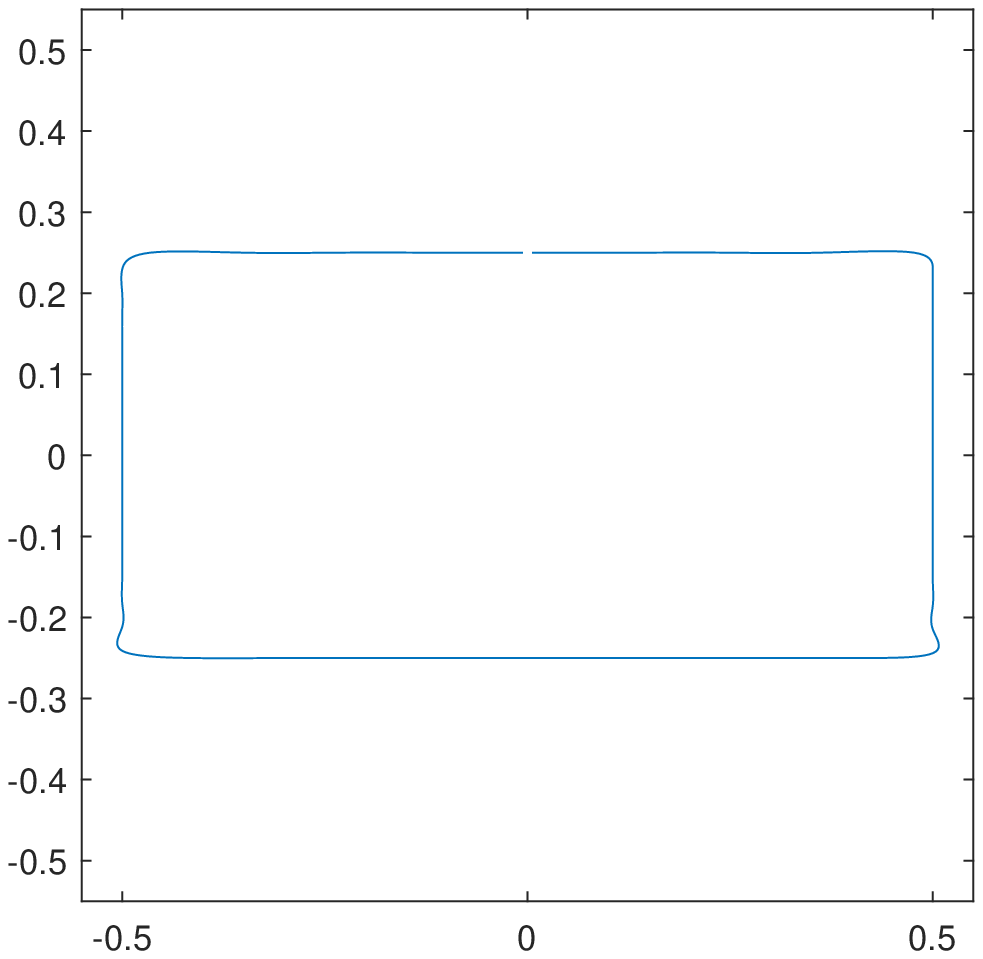}} \\ \rule{0pt}{7ex}
				\quad & \raisebox{5ex - \height}{\includegraphics[ scale=0.25]{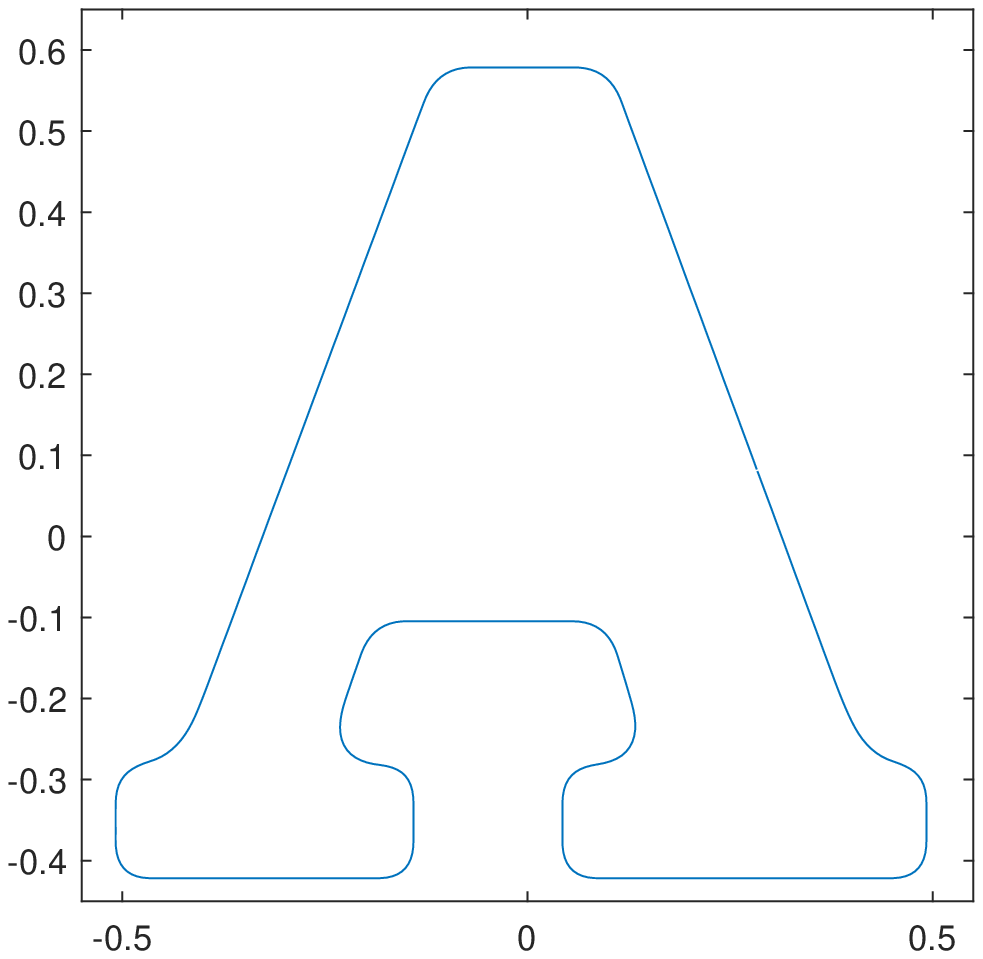}} &
		\raisebox{5ex - \height}{\includegraphics[ scale=0.25]{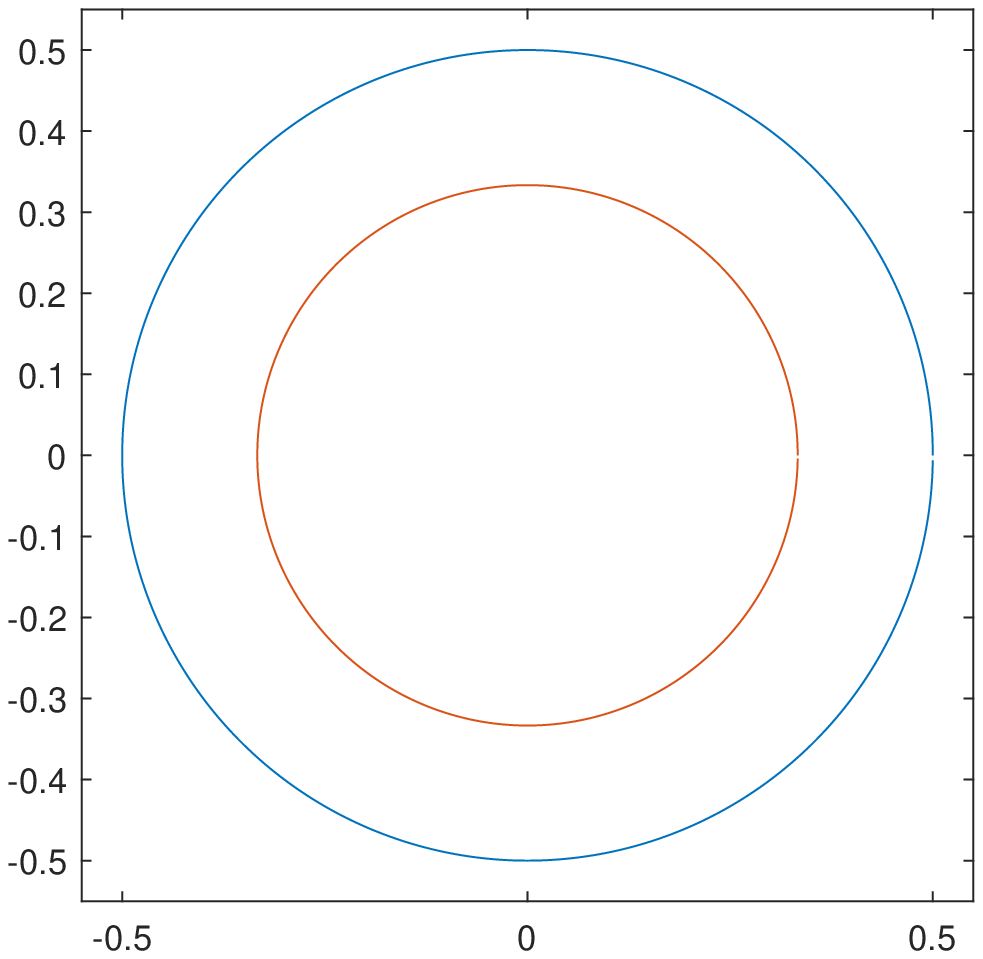}} &
		\raisebox{5ex - \height}{\includegraphics[ scale=0.25]{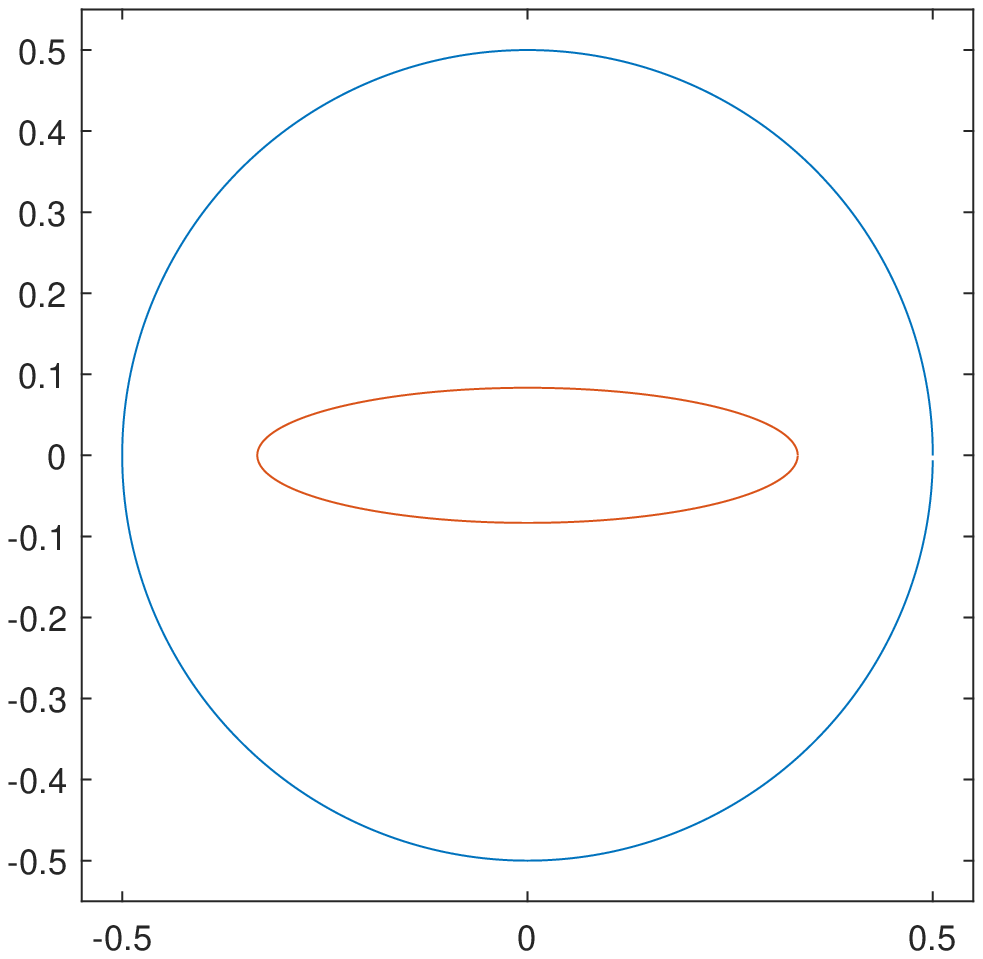}} & \raisebox{5ex - \height}{\includegraphics[ scale=0.25]{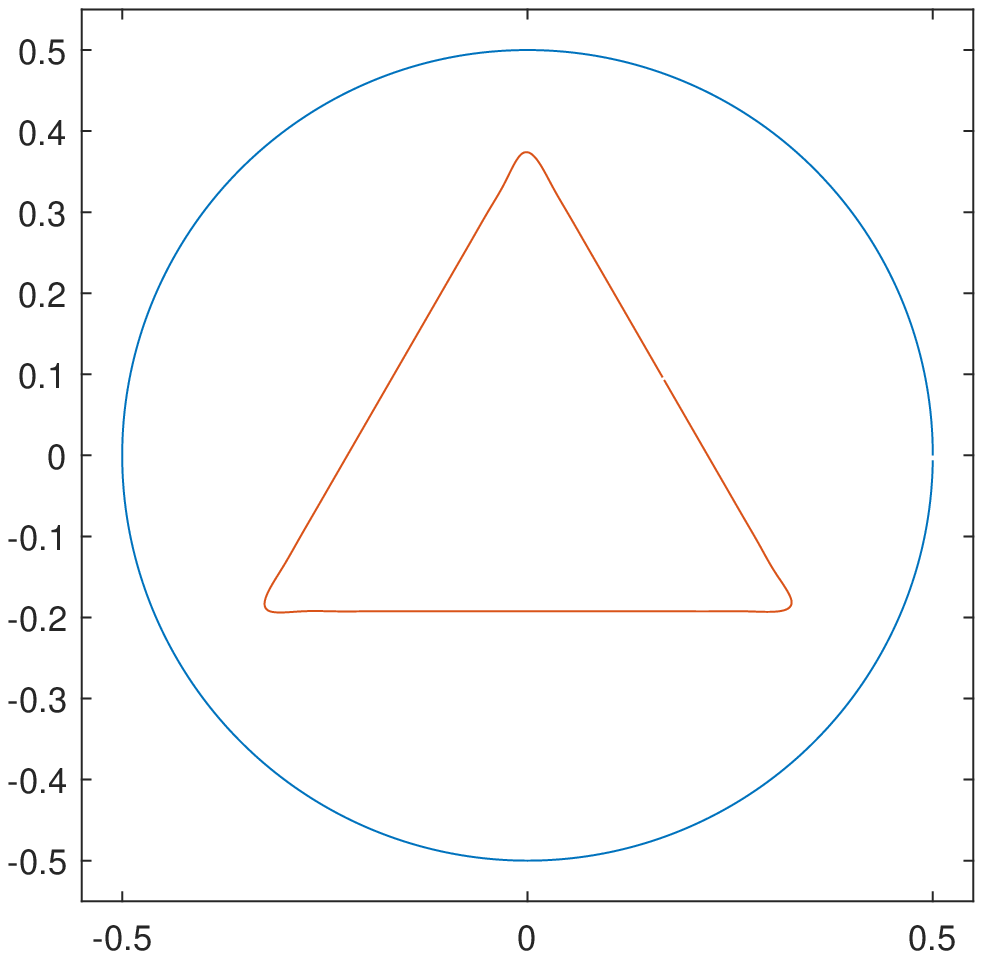}} &
		\raisebox{5ex - \height}{\includegraphics[scale=0.25]{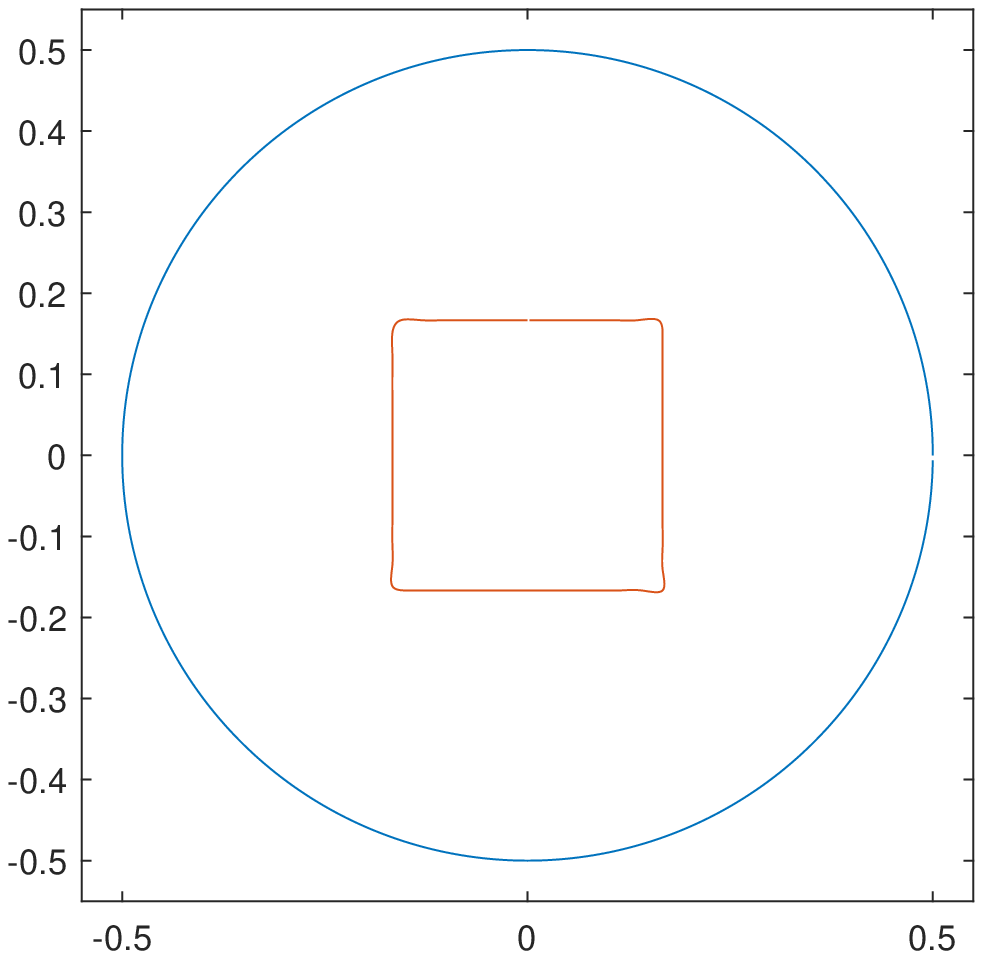}} \\ \rule{0pt}{7ex}
				\quad & \raisebox{5ex - \height}{\includegraphics[ scale=0.25]{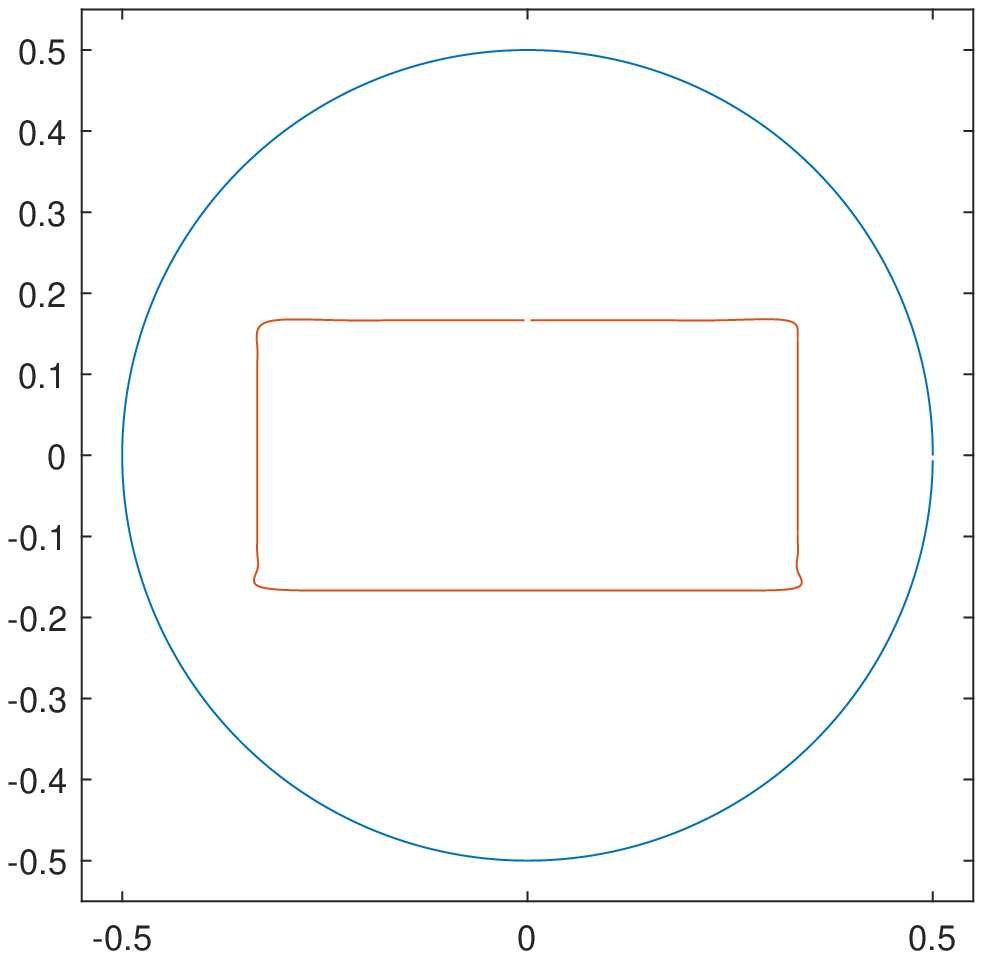}} &
		\raisebox{5ex - \height}{\includegraphics[ scale=0.25]{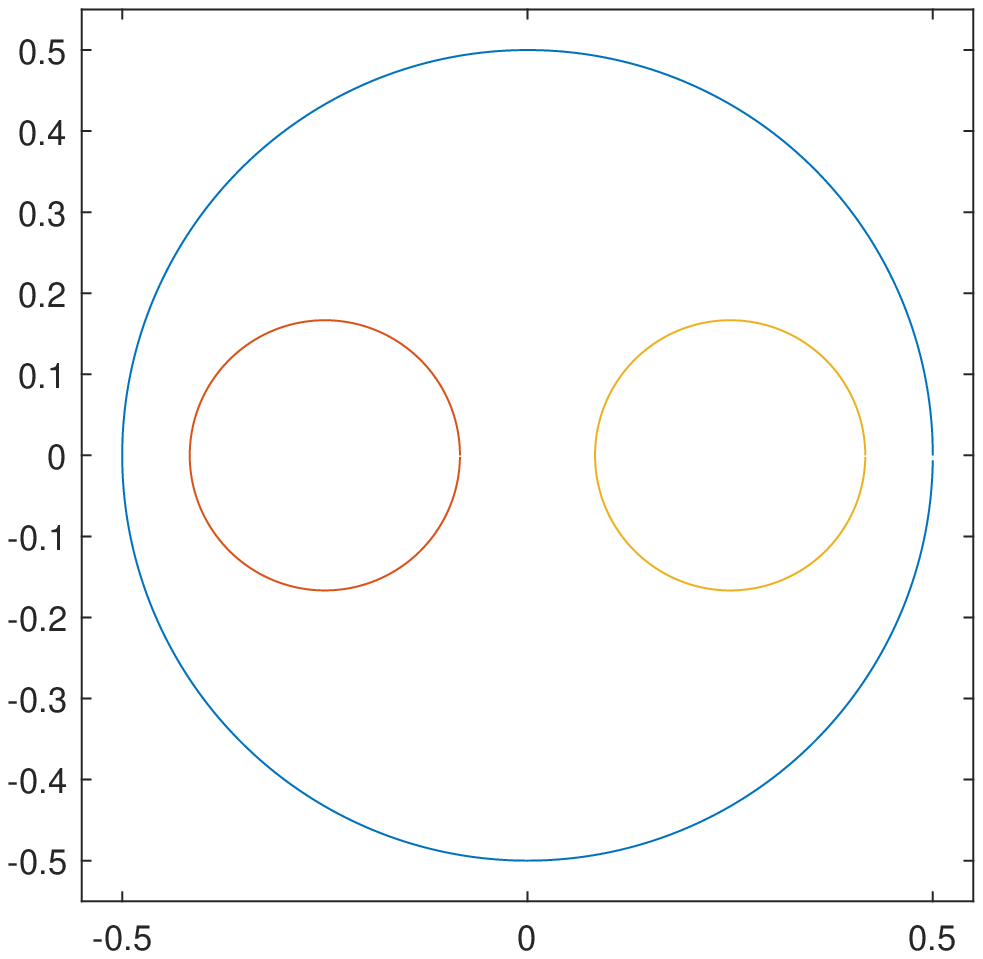}} &
		\raisebox{5ex - \height}{\includegraphics[ scale=0.25]{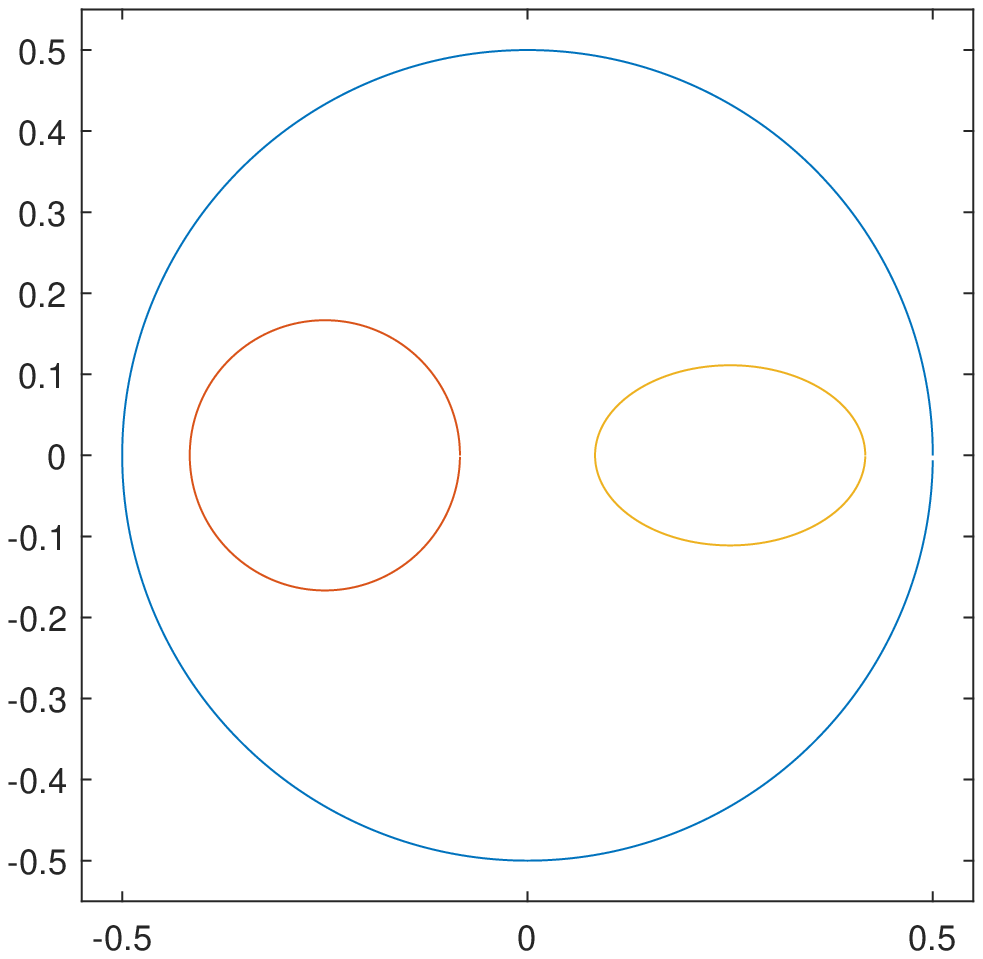}} & \raisebox{5ex - \height}{\includegraphics[ scale=0.25]{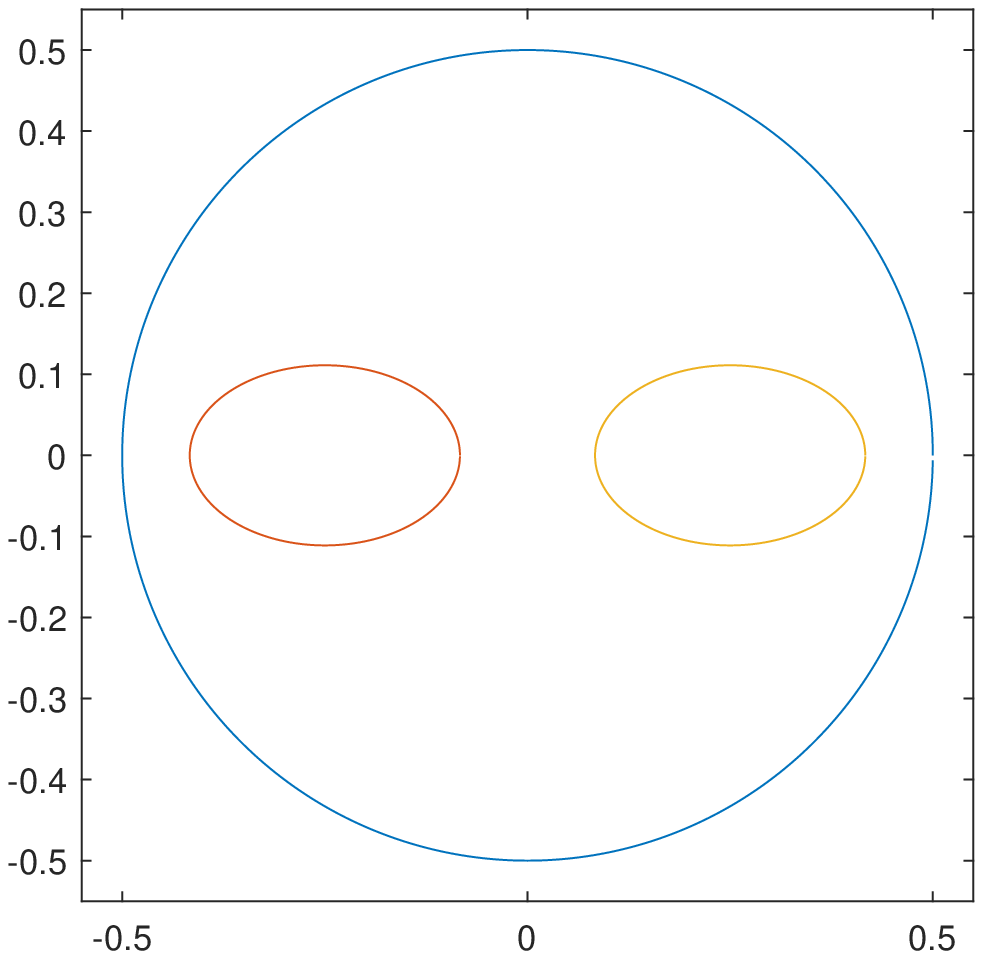}} &
		 \\
	\end{tabular}
	\caption{\textit{Dictionary of targets}}
	\label{fig:dico}
\end{figure}

\subsection{Acquisition system}
We generate a circular acquisition system using plane waves: the receivers $x_r$ are uniformly distributed on a circle of radius $R$ and centered at $z_0$, and the sources are plane waves with equally distributed wave direction. More precisely, for the $r$th receiver we have $|x_r - z_0| = R$ and the angle $\theta_r := \theta_{x_r - z_0} = 2 \pi r/N_r$, and the $s$th plane wave source is given by
$$U_s(x) = e^{ik_0 \xi_s \cdot x},$$
where the vector $\xi_s$ is such that $|\xi_s|=1$ and $\theta_s := 2 \pi s/N_s$. We denote by $N_s$ the total number of plane waves as sources, and by $N_r$ the number of receivers. Note that $z_0$ can be obtained using some localization algorithm \cite{math and stat method}. Here, we assume that $z_0$ is close to the center of the target $D$. 

For this experiment, we adopt a circular acquisition system with $R = 3$, $N_s = 91$, and $N_r = 91$. For simplicity, we always choose the center $z_0 = [0, 0]^T$. Figure \ref{fig:acq} illustrates this acquisition system for the three different elements of the dictionary $\mathcal{D}$.

\subsection{Measurements}
In each numerical experiment, the unknown target $D$ is obtained from one of the elements of the dictionary by a rotation with angle $\theta=\pi/3$, a scaling $s=1.2$ and a translation $z = [-0.5, 0.5]^T$.

Each element of the dictionary is approximated by $2^9$ points. We reconstruct the matrix $\textbf{W}=(W_{mn})_{mn}$ of scattering coefficients at order $25$. Figure \ref{fig:err-rel} plots the relative error of the (analytical) reconstruction $\| \textbf{W}^{\text{est}} - \textbf{W} \|_F / \| \textbf{W} \|_F$ as a function of $K$ for the three kinds of targets in the dictionary ($\|\cdot\|_F$ denotes the Frobenius norm of matrices). It can be seen that the reconstruction is robust: for example, in the case of a disk with a circular inclusion inside, with 20\% of noise, the error is less than 10\% for an order $K$ up to $45$.

\subsection{Scale estimation}

Given an unknown target $D_n=z + s R_{\theta} B_n$ and a dictionary of (inhomogeneous) objects $\mathcal{D}=(B_n)_n$, by measurements we reconstruct the distribution descriptor $S_{D_n} (v; \omega)$ and build a frequency dependent dictionary of distribution descriptors $(S_{B_n}(v; \omega))_
n$. 

Note that the distribution descriptor of the target $S_{D_n}$ is frequency dependent. As we proved in the previous sections, since the frequency $\omega$ is coupled with the scaling factor $s$, which is unknown and arbitrary in $(0, \infty)$, to adapt the distribution descriptor $S_{D_n}$ to target identification we assume that the physical operating frequency is limited, that is $0 < \omega_{\min} \leq \omega \leq \omega_{\max} < \infty $, and that $0 < s_{\min} \leq s \leq s_{\max} < \infty$, which means that the target we are interested in should not be too small or too large. Finally, $s^{\text{est}}$ can be estimated as in \cite{echolocation} by solving
\begin{equation}
s^{\text{est}} = \mbox{arg min}_{s \in [s_{\min}, s_{\max}]} \left\{ \int_{\omega_{\min}}^{\omega_{\max}} \left( \int_{[0, 2 \pi]^2} [S_{D_n}(v; \omega) - S_{B_n}(v ; s \omega)] \; \mbox{d} v \right)^2 \mbox{d} \omega \right\}.
\label{estim}
\end{equation} 
Note that a wide range of frequencies $[\omega_{\min}, \omega_{\max}]$ brings more information and therefore improves the estimation \eqref{estim}. 

\begin{figure}[t]
	\centering
	\begin{tabular}{@{\hspace{-0.75cm}}c@{\hspace{1.1cm}}c@{\hspace{1.1cm}}c@{\hspace{1.1cm}}c}
		& & & \\
		\quad & \raisebox{7ex - \height}{\includegraphics[ scale=0.425]{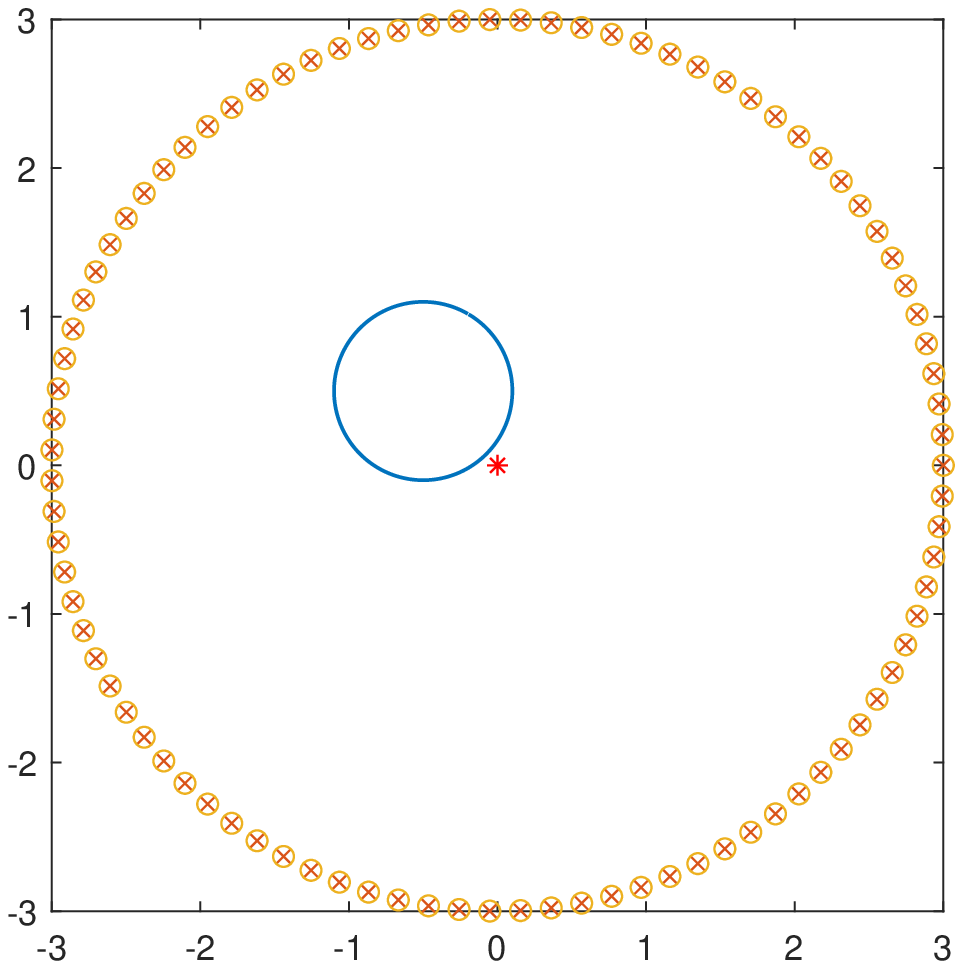}} &
		\raisebox{7ex - \height}{\includegraphics[ scale=0.425]{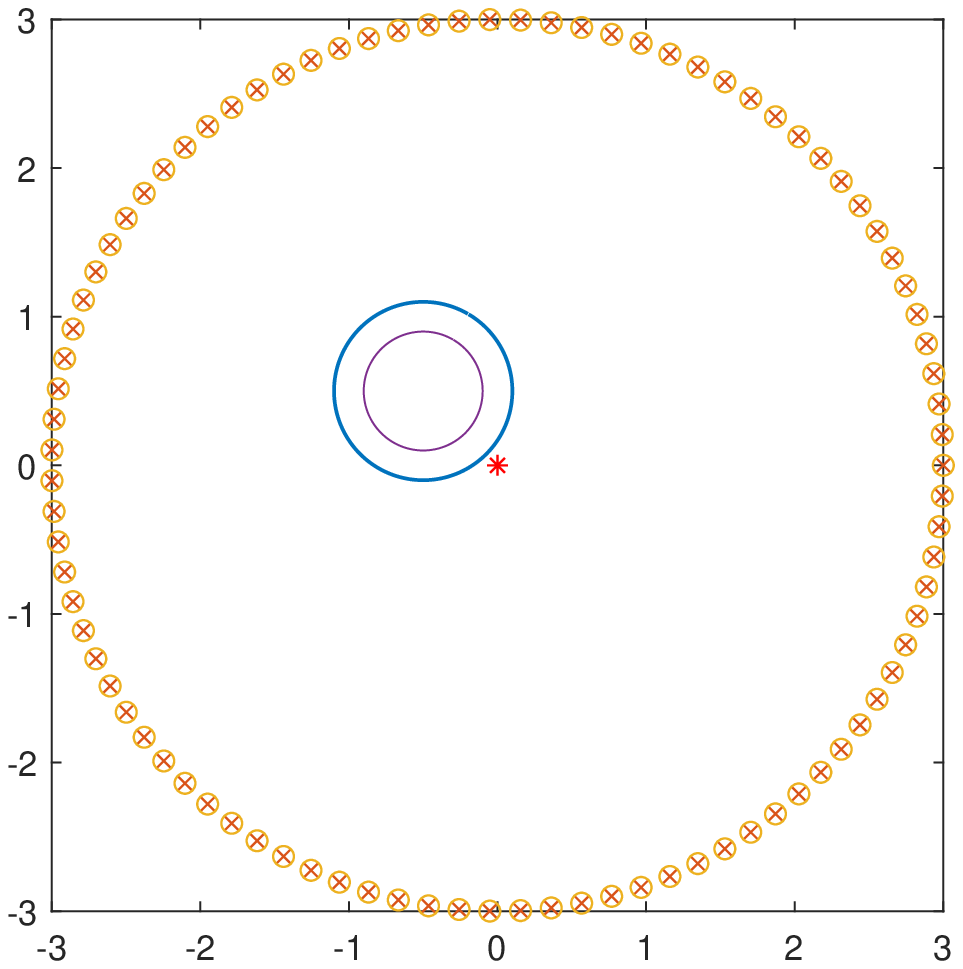}} &
		\raisebox{7ex - \height}{\includegraphics[ scale=0.425]{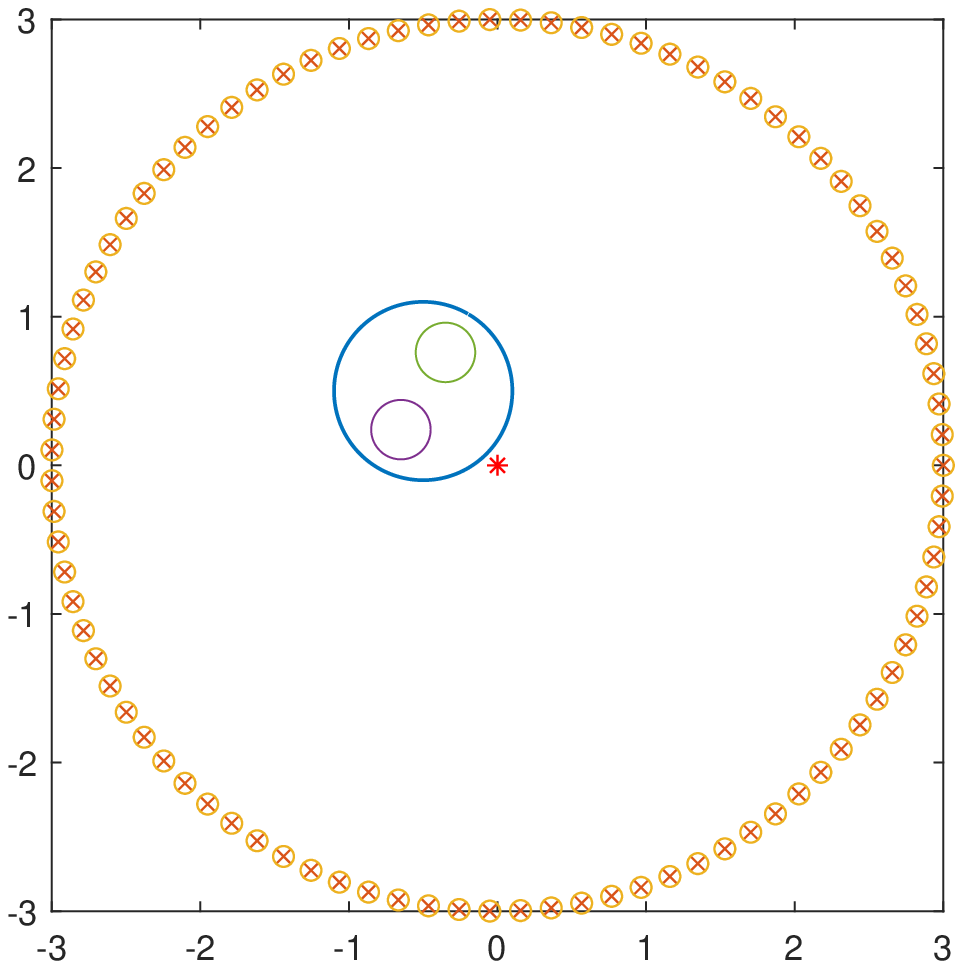}} \\ \quad & \raisebox{4ex - \height} \; (a) Homogeneous disk & \raisebox{4ex - \height} \; (b) Disk with a disk inside & \raisebox{4ex - \height} \; (c) Disk with two disks inside\\
	\end{tabular}
	\caption{\textit{Circular acquisition systems for three kinds of targets. We adopt full aperture with Ns = 91 plane wave sources (angular position is marked by $\circ$) and Nr = 91 receivers (marked by $\times$). Measurement center is marked by $\ast$. Figure (a) shows a circular acquisition system for an homogeneous target (a disk); (b) shows a circular acquisition system for an inhomogeneous target (a disk with a circular inclusion inside); (c) shows a circular acquisition system for an inhomogeneous disk with two distinct inclusions inside.}}
	\label{fig:acq}
\end{figure}

\vspace{6mm}

\subsection{Numerical implementation}

We can solve \eqref{estim} by sampling. The overall procedure is similar to the one of \cite{echolocation} for the homogeneous case. Let $N^{\text{dic}}_{\omega}$, $N_{\omega}$, $N_v$, and $N_{\delta}$ be positive integers. We define: 

\begin{itemize} 
\item $(\omega_l^{\text{dic}})_{l = 0, \ldots , N^{\text{dic}}_{\omega}}$ uniformly distributed points on $[\omega^{\text{dic}}_{\min}, \omega^{\text{dic}}_{\max}]$, with 

$$\omega^{\text{dic}}_{\min}:= \omega_{\min} s_{\min},$$ 
$$\omega^{\text{dic}}_{\max}:= \omega_{\max} s_{\max}.$$ 

\item $(\omega_k)_{k = 0, \ldots ,N_{\omega}}$ uniformly distributed points on $[\omega_{\min}, \omega_{\max}]$. 

\item $((v^1_i,v^2_j))_{i,j = 1, \ldots, N_v}$ uniformly distributed points on $[0, 2 \pi]^2$.

\item $(s_t)_{t = 0, . . . ,N_{\delta}}$ uniformly distributed points on $[s_{\min}, s_{\max}]$. 

\item $I_k(s):=\{ 1 \leq l \leq N^{\text{dic}}_{\omega}, \mbox{ such that } \omega^{\text{dic}}_{l-1} \leq s \omega_k \leq \omega^{\text{dic}}_l\}$. 

\end{itemize}.	
The distribution descriptors $S_{B_n}$ and $S_{D_n}$ are sampled at discrete positions as follows: 
$$S^{D_n}_{ijk} := S_{D_n}((v^1_i , v^2_j);\omega_k), \; S^{B_n}_{ijl} := S_{B_n}((v^1_i , v^2_j);\omega_l).$$
Finally, we discretize the functional inside the argmin in \eqref{estim}:
$$J(t; D_n,B_n) = \sum_{k=0}^{N_{\omega}} \; \sum_{l \in I_k(s_t)} \left( \sum_{i,j=1}^{N_v} (S^{D_n}_{ijk}-S^{B_n}_{ijl}) \right)^2,$$ 
and the scaling factor $s^{\text{est}}$ can be estimated by solving
$$\epsilon(D_n,B_n) = \min_{t=0,\ldots,N_{\delta}} J (t;D_n,B_n).$$

\begin{figure}[h!]
	\centering
	\begin{tabular}{ccc}
		& & \\
		\quad & \raisebox{7ex - \height}{\includegraphics[ scale=0.525]{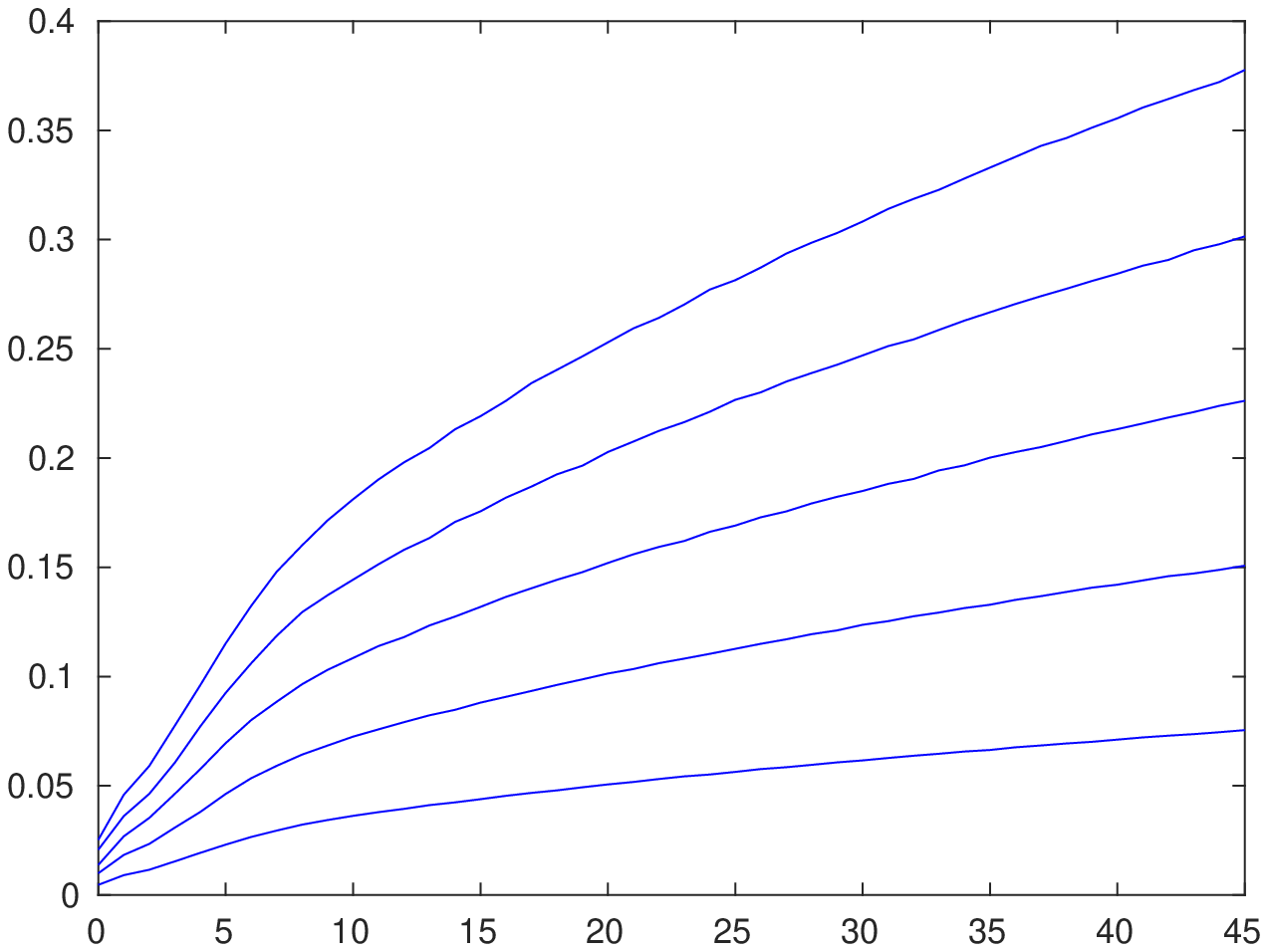}} &
		\raisebox{7ex - \height}{\includegraphics[ scale=0.525]{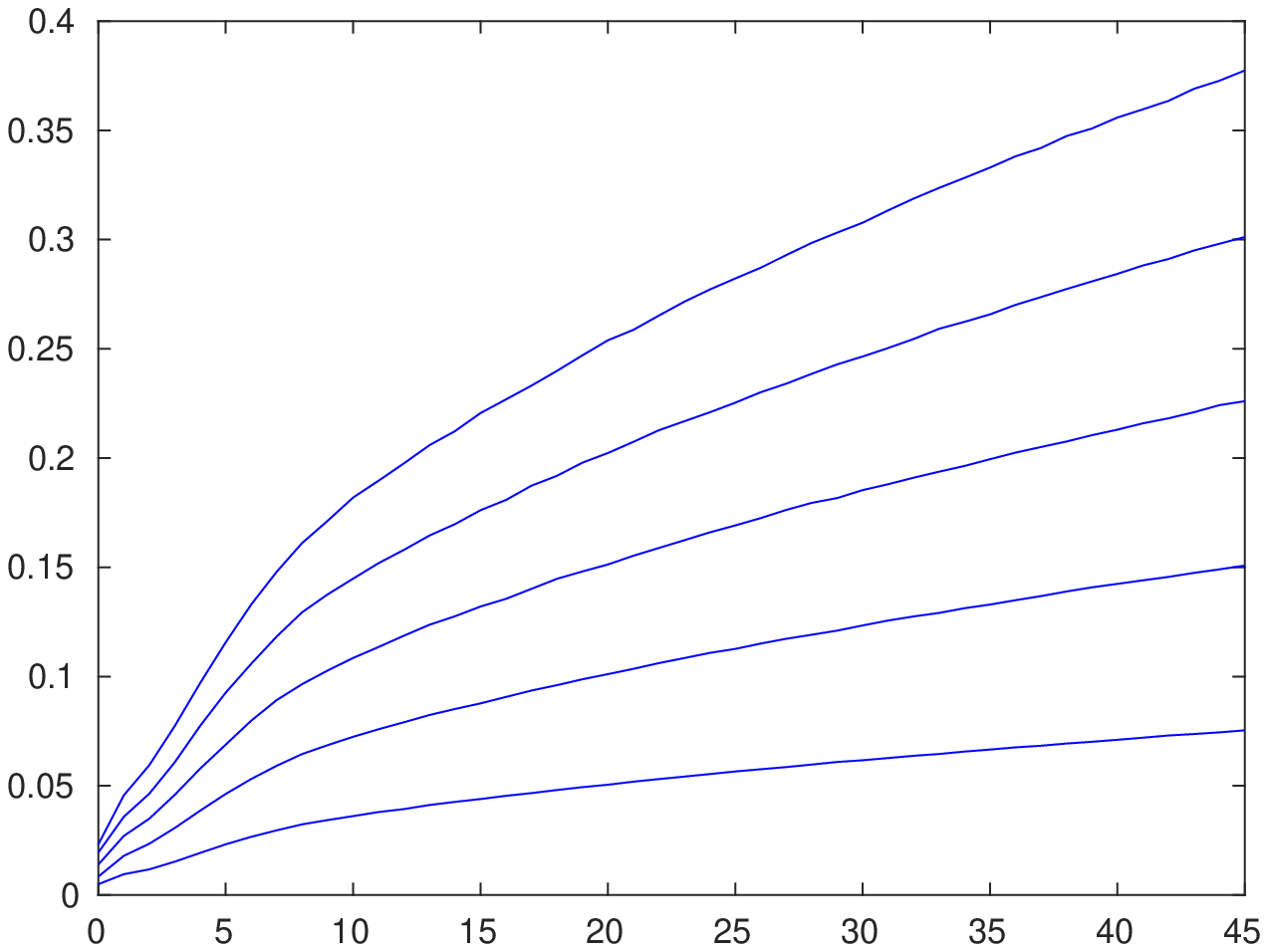}} \\ \quad & \raisebox{4ex - \height} \; (a) Disk with a circular inclusion inside & \raisebox{4ex - \height} \; (b) Disk with two circular inclusions inside\\
	\end{tabular}
	\caption{\textit{Relative error of the reconstruction} $\| \textbf{W}^{\text{est}} - \textbf{W} \|_F / \| \textbf{W} \|_F$ \textit{for the systems (b) and (c) in Figures \ref{fig:acq} at a different order K and fixed frequency $\omega= 0.75 \pi$. The curves from bottom to top correspond to percentage of noise $\sigma_0 = 20\%$, $40\%$, $60\%$, $80\%$, and $100\%$. The experiments have been repeated 100 times.}}
	\label{fig:err-rel}
\end{figure}

\subsection{Frequency-dependent dictionary and matching algorithm}

We construct the frequency-dependent dictionary of distribution descriptors as follows. For a collection of standard elements of the dictionary $(B_n)_n$, we precompute the discrete samples $(S^{B_n}_{ijl})_{ijl}$ of the distribution descriptor $S_{B_n}(v; \omega)$, for $v \in [0, 2 \pi]^2$ and $\omega \in [\omega^{\text{dic}}_{\min}, \omega^{\text{dic}}_{\max}]$. The discrete samples $((S^{B_n}_{ijl})_{ijl})_n$ constitute our frequency-dependent dictionary.

Assume that our (inhomogeneous) target $D$ is generated by an element of the dictionary $(B_n)_n$, up to some unknown translation, rotation, and scaling. Suppose that the scaling factor is such that $s_{\min} \leq s \leq s_{\max}$, where $s_{\min}$ and $s_{\max}$ are known. In order to detect the target $D$ among the elements of the dictionary, we compute the discrete samples $(S^D_{ijk})_{ijk}$ of the distribution descriptor $S_D(v; \omega)$, and calculate $\epsilon(D,B_n)$ for all elements of the above mentioned dictionary. The minimizer of $(\epsilon(D,B_n))_n$ is taken as the identified target and is expected to give the best estimation of $s^{\text{est}}$. This procedure is described in detail in Algorithm \ref{algo:target-identif}, which was first introduced by Ammari \textit{et al.} \cite{echolocation}. 

\begin{algorithm}
	\begin{algorithmic}
		
		\STATE Input: $(S^D_{ijk})_{ijk}$ of unknown target $D$; $((S^{B_n}_{ijl})_{ijl})_n$ of the whole dictionary.
		\FOR {$B_n$ in the dictionary}
		\STATE $\epsilon_n \leftarrow \epsilon(D, B_n)$;
		\STATE $n \leftarrow n+1$; 
		\ENDFOR
		\STATE Output: The true dictionary element $n^* \leftarrow \arg\min_n \epsilon_n$.
	\end{algorithmic}
	\caption{Target identification algorithm}
	\label{algo:target-identif}
\end{algorithm}

\subsection{Parameter settings for identification and scaling estimation} 

For this experiment, the frequency-dependent dictionary of distribution descriptors $((S^{B_n}_{ijl})_{ijl})_n$ is computed for the range of frequency $[\omega^{\text{dic}}_{\min}, \omega^{\text{dic}}_{\max}] = [0.25 \pi, 1.5 \pi]$, with $N^{\text{dic}}_{\omega} = 78$ and $N_v = 512$. Data simulation is conducted for the range of operating frequency $[\omega_{\min}, \omega_{\max}] = [0.5\pi, \pi]$ with $N_{\omega} = 52$. The range of valid scaling factor is $[s_{\min}, s_{\max}] = [0.5, 1.5]$, with $N_{\delta} = 250$.

\subsection{Results of target identification}

Now,  we present results of target identification obtained using the full-view setting of Figure \ref{fig:acq}: 
\begin{itemize}

\item It can be seen that the identification succeeded for all targets with noise $\sigma_0$ up to $50 \%$. In the case of $\sigma_0=0 \%$ (see Appendix \ref{secappendix2}), the error bars of each identified target have very different numerical value compared to those of the other elements of the dictionary. This means that recognition works well and a dictionary of large size can be used in practice. 

\item Figures \ref{fig:bars1} and \ref{fig:bars2} show the error bars for the dictionary of Figure \ref{fig:dico} for all inhomogeneous targets with noise $\sigma_0=40 \%$, $80 \%$. The $m$th error bar in the $n$th group describes the error $\epsilon(D,B_m)$ of the matching experiment using the generating element of the dictionary $B_n$. The shortest bar in each group is the target identified by the matching procedure and is marked in green; the true target is marked in red where the identification fails. For $\sigma_0=40 \%$, identification succeeded and $s^{\text{est}}$ is also close to the true value $s=1.2$, see Figure \ref{fig:scal1}. For $\sigma_0=80 \%$, identification failed for two inhomogeneous targets.
 
\begin{figure}[h!]
	\centering
		\raisebox{10ex - \height}{\includegraphics[ scale=0.4]{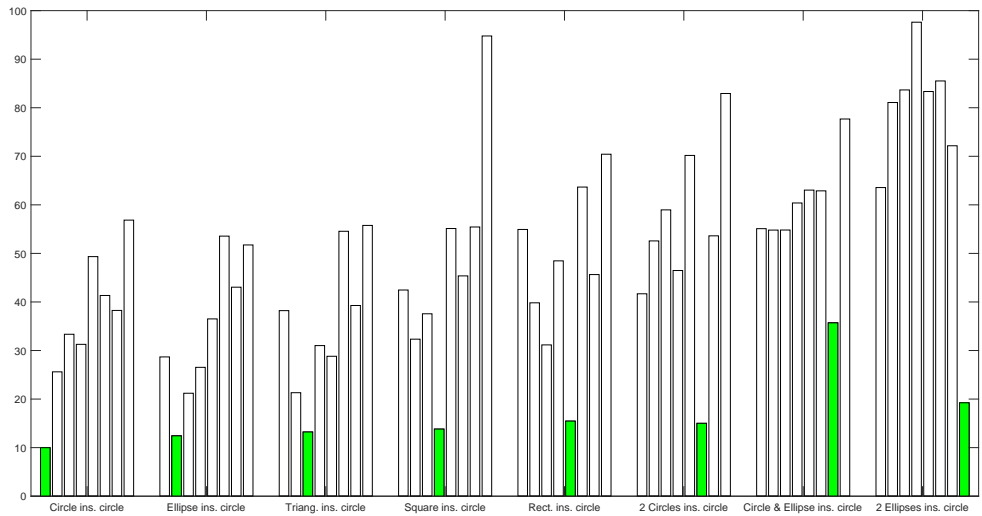}}
	\caption{\textit{Results of identification for all inhomogeneous objects in the full-view setting and $\sigma_0=40\%$. Measurements have been repeated 1000 times.}}
	\label{fig:bars1}
\end{figure} 

\begin{figure}[h!]
	\centering
	\raisebox{10ex - \height}{\includegraphics[ scale=0.4]{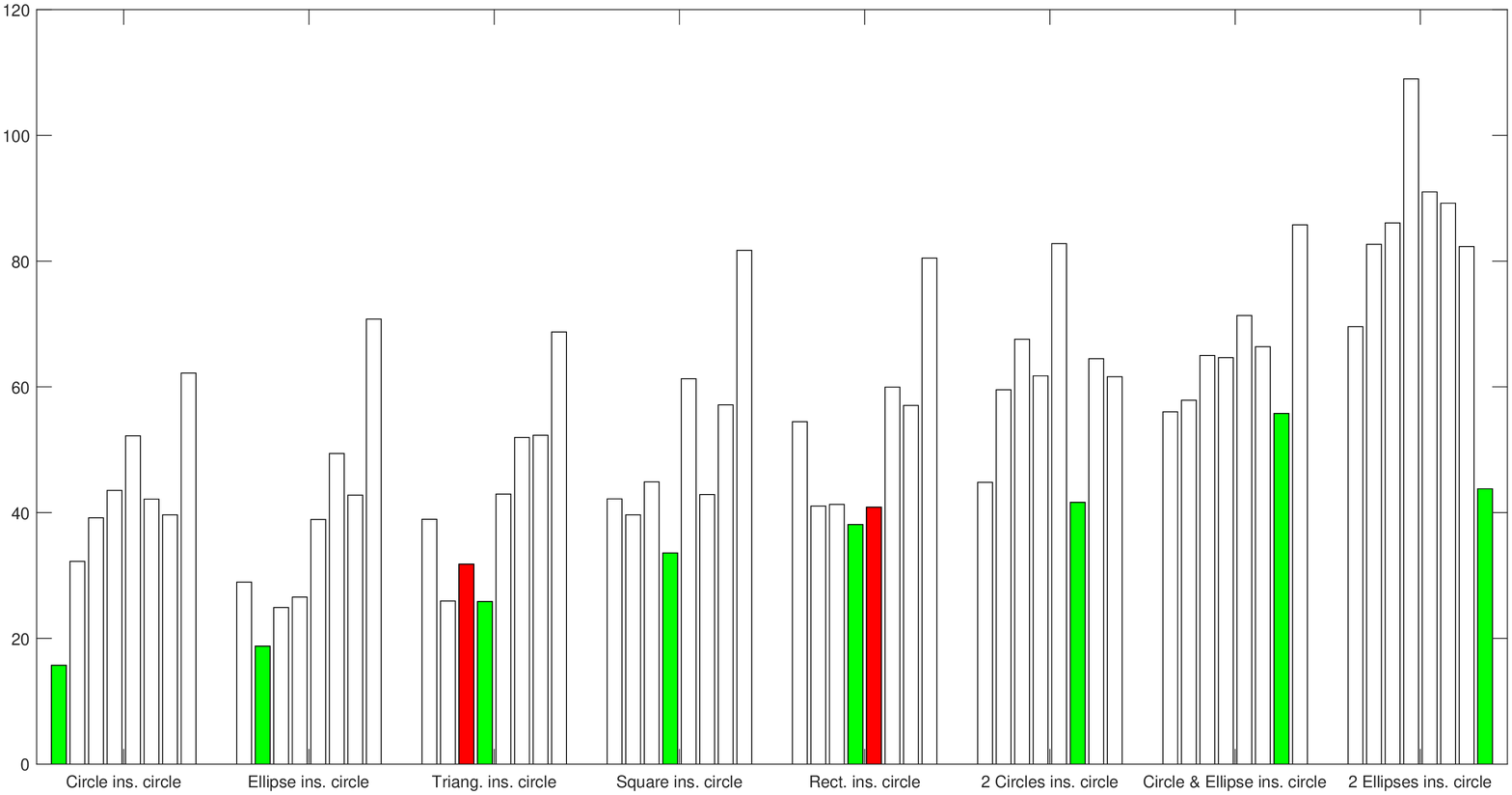}}
	\caption{\textit{Results of identification for all inhomogeneous objects in the full-view setting and $\sigma_0=80\%$. Identification failed for two targets. Measurements have been repeated 1000 times.}}
	\label{fig:bars2}
\end{figure}

\begin{figure}[t!]
	\centering
	\raisebox{10ex - \height}{\includegraphics[ scale=0.4]{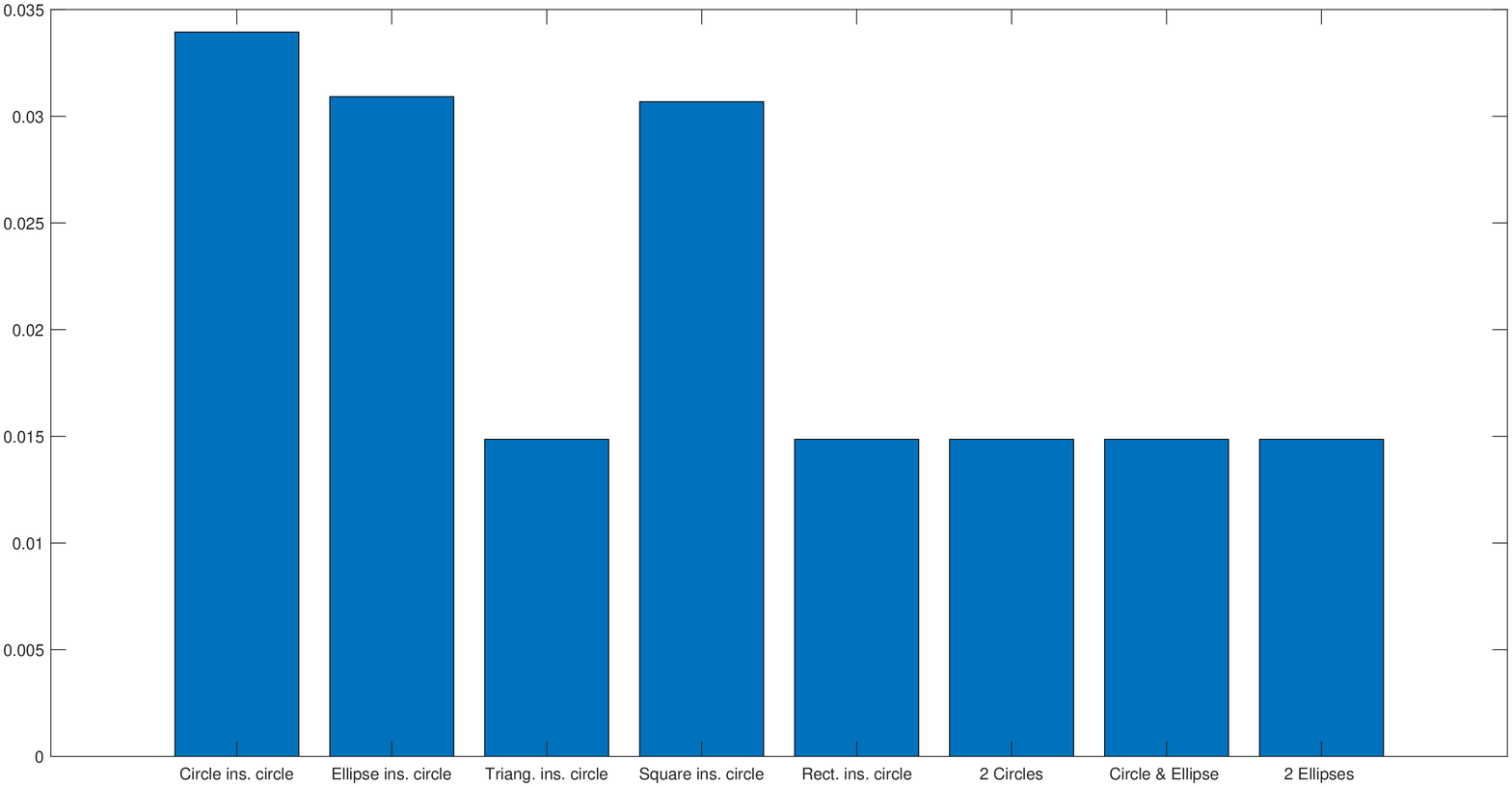}}
	\caption[width=0.5\textwidth]{\textit{Difference between the estimated scaling factor and the true one (s = 1.2) at $\sigma_0=40\%$. Measurements have been repeated 1000 times.}}
	\label{fig:scal1}
\end{figure} 

\hfill

\item Figure \ref{fig:probability} shows the probability of recognition for the inhomogeneous targets of the dictionary at different noise levels. Measurements have been repeated $1000$ times. 
\end{itemize}

\begin{figure}[H]
		\centering
		\raisebox{10ex - \height}{\includegraphics[ scale=0.4]{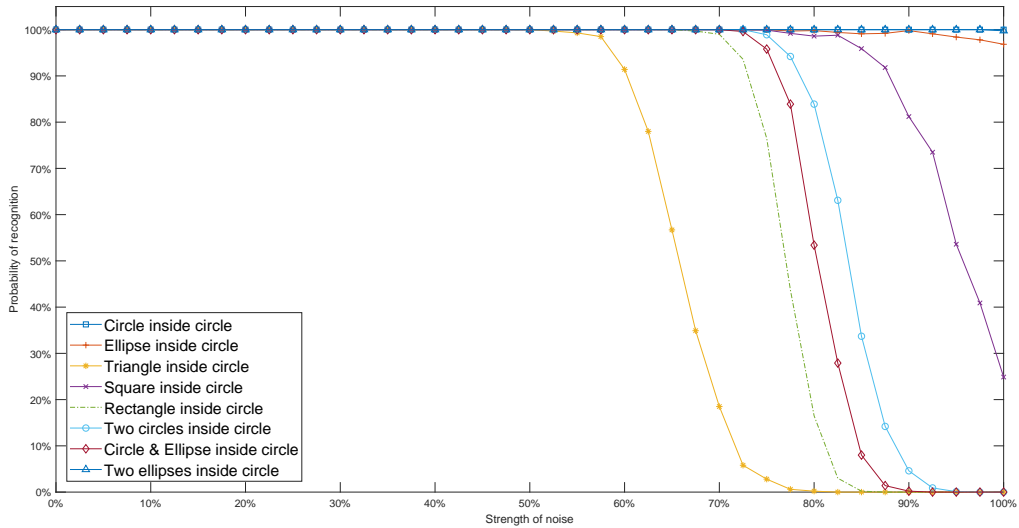}}
	\caption[width=0.5\textwidth]{\textit{Probability of recognition for all inhomogeneous targets of the dictionary of Figure \ref{fig:dico}.}}
	\label{fig:probability}
\end{figure}

\section{Concluding remarks} \label{sec7}
In this paper,  we have presented a framework of target identification for inhomogeneous objects. We have provided and numerically tested in the presence of measurement noise a procedure for target classification in wave imaging based on matching on a dictionary of precomputed frequency-dependent distribution descriptors. The construction of such frequency-dependent distribution descriptors is based on the properties of the inhomogeneous scattering coefficients. For a collection of inhomogeneous targets, we first extracted the scattering coefficients from the reflected waves and then used a target identification algorithm in order to identify an inhomogeneous target from the dictionary up to some translation, rotation and scaling. It can be seen that the identification succeeded for all targets with noise $\sigma_0$ up to $50 \%$.

\appendix

\section{Piecewise constant distributions} \label{secappendix}
In the appendix, we provide an integral representation of the solution to \eqref{helm} for the special case of a domain $B$ with piecewise constant electric permittivity $\mu$ and magnetic permeability $\sigma$. This can be seen as a particular case of \eqref{solrep}.

\subsection{The case of an inhomogeneous object with one inclusion inside} 
We consider the case of a domain $B$ with one inclusion $B_i$ inside. $B$ is immerged in an homogeneous medium. $B_i$ has different constant permeability and permittivity than the one of $B$ and the background.   

Let us consider the following Helmholtz problem
\begin{equation}
\begin{cases} \nabla \cdot \frac{1}{\sigma} \nabla u + \omega^2 \mu u = 0 & \mbox{ in } \mathbb{R}^2, \\ \left |\frac{\partial (u - U)}{\partial |x|}-i\omega(u-U) \right | \leq \frac{K}{|x|^{\frac{3}{2}}} & \mbox{ if } |x| \to \infty,
\end{cases}
\label{helm-pw}
\end{equation}
where
\begin{equation*}
\begin{cases}
\frac{1}{\sigma}(x)= \frac{1}{\sigma_i} \chi_{B_i}(x)+\frac{1}{\sigma_e} \chi_{B_e \setminus \overline{B}_i}(x) + \frac{1}{\sigma_0} \chi_{\mathbb{R}^2 \setminus \overline{B}_e}(x), \\
\mu(x)= \mu_i \chi_{B_i}(x)+ \mu_e \chi_{B_e \setminus \overline{B}_i}(x) + \mu_0 \chi_{\mathbb{R}^2 \setminus \overline{B}_e}(x),
\end{cases}
\end{equation*}
with $B_i \subset B_e=B$. Let us define $k_0 = \omega$, $k_e = \omega \sqrt{\sigma_e \mu_e}$ and $k_i = \omega \sqrt{\sigma_i \mu_i}$. Solution to \eqref{helm-pw} should satisfy
\begin{equation}
\begin{cases} \Delta u + k^2_0 u = 0 & \mbox{ in } \mathbb{R}^2 \setminus \overline{B}_e, \\ \Delta u + k^2_e u = 0 & \mbox{ in } B_e \setminus \overline{B}_i, \\ \Delta u + k^2_i u = 0 & \mbox{ in } B_i, \\ \left |\frac{\partial (u - U)}{\partial |x|}-i\omega(u-U) \right | \leq \frac{K}{|x|^{\frac{3}{2}}} & \mbox{ if } |x| \to \infty,
\end{cases}
\label{helm-pw-explicit}
\end{equation}
with the following transmission conditions
\begin{equation}
\begin{cases} u|_{+} = u|_{-} & \mbox{ on } \partial B_e, \\ u|_{+} = u|_{-} & \mbox{ on } \partial B_i, \\ \left. \frac{1}{\sigma_0} \frac{\partial u}{\partial \nu}\right|_{+} = \left. \frac{1}{\sigma_e} \frac{\partial u}{\partial \nu} \right|_{-} & \mbox{ on } \partial B_e, \\ \left. \frac{1}{\sigma_e} \frac{\partial u}{\partial \nu}\right|_{+} = \left. \frac{1}{\sigma_i} \frac{\partial u}{\partial \nu} \right|_{-} & \mbox{ on } \partial B_i.
\end{cases}
\label{transm-pw}
\end{equation}

Given the cylindrical wave $\mathcal{C}_n$ of index $n \in \mathbb{Z}$ and of wave number $k_0$, we look for a solution to \eqref{helm-pw} of the form 
\begin{equation}
u_n(x) = \begin{cases} \mathcal{C}_n(x)+S^{k_0}_{B_e}[\phi](x) & \mbox{ in } \mathbb{R}^2 \setminus \overline{B}_e, \\ S^{k_e}_{B_e}[\gamma](x) + S^{k_e}_{B_i}[\eta](x) & \mbox{ in } B_e \setminus \overline{B}_i, \\ S^{k_i}_{B_i}[\psi](x) & \mbox{ in } B_i,
\end{cases}
\label{numerical-sol}
\end{equation}
where the densities $\psi_n, \gamma_n, \eta_n$ and $\phi_n$ are the solutions to
\begin{equation}
\begin{cases} S^{k_i}_{B_i}[\psi_n](x)= S^{k_e}_{B_e}[\gamma_n](x) + S^{k_e}_{B_i}[\eta_n](x)  & \mbox{ on } \partial B_i, \\ S^{k_e}_{B_e}[\gamma_n](x) + S^{k_e}_{B_i}[\eta_n](x) = \mathcal{C}_n(x)+S^{k_0}_{B_e}[\phi_n](x) & \mbox{ on } \partial B_e, \\ \left. \frac{1}{\sigma_i}\frac{\partial  S^{k_i}_{B_i}[\psi_n]}{\partial \nu}\right|_{-}  = \left. \frac{1}{\sigma_e}\frac{\partial S^{k_e}_{B_e}[\gamma_n]}{\partial \nu}\right|_{+} + \left. \frac{1}{\sigma_e}\frac{\partial S^{k_e}_{B_i}[\eta_n]}{\partial \nu}\right|_{+} & \mbox{ on } \partial B_i, \\ \left. \frac{1}{\sigma_e}\frac{\partial S^{k_e}_{B_e}[\gamma_n]}{\partial \nu}\right|_{-} + \left. \frac{1}{\sigma_e}\frac{\partial S^{k_e}_{B_i}[\eta_n]}{\partial \nu}\right|_{-}  = \left. \frac{1}{\sigma_0}\frac{\partial \mathcal{C}_n}{\partial \nu}\right|_{+} + \left. \frac{1}{\sigma_0}\frac{\partial S^{k_0}_{B_e}[\phi_n]}{\partial \nu}\right|_{+} & \mbox{ on } \partial B_e.
\end{cases}
\label{transm-numerical}
\end{equation}

As we have proved in \eqref{scatcoef}, the scattering coefficient of order $n,m$ associated to the target $B$ with (inhomogeneous) piecewise constant permittivity and permeability is
$$W_{n,m} [B, \sigma, \mu, \omega] =\int_{ \partial B_e } \overline{C_n(y)}\phi_m(y) dS_y.$$

\subsection{The case of an inhomogeneous object with two (distinct) inclusions inside} 

Now we take into account the case of a domain $B$ with two inclusions $B_1$ and $B_2$ inside. $B$ is immerged in a homogeneous medium. $B_1$ and $B_2$ have different constant permeability and permittivity than the one of $B$ and the background.   

Let us consider the following Helmholtz problem
\begin{equation}
\begin{cases} \nabla \cdot \frac{1}{\sigma} \nabla u + \omega^2 \mu u = 0 & \mbox{ in } \mathbb{R}^2, \\ \left |\frac{\partial (u - U)}{\partial |x|}-i\omega(u-U) \right | \leq \frac{K}{|x|^{\frac{3}{2}}} & \mbox{ if } |x| \to \infty,
\end{cases}
\label{helm-pw-2}
\end{equation}
where
$$\frac{1}{\sigma}(x)= \frac{1}{\sigma_1} \chi_{B_1}(x)+\frac{1}{\sigma_2} \chi_{B_2}(x) + \frac{1}{\sigma_e} \chi_{B_e \setminus \overline{(B_1 \cup B_2)}}(x) + \frac{1}{\sigma_0} \chi_{\mathbb{R}^2 \setminus \overline{B}_e}(x),$$
$$\mu(x)= \mu_1 \chi_{B_1}(x)+ \mu_2 \chi_{B_2}(x) + \mu_e \chi_{B_e \setminus \overline{(B_1 \cup B_2)}}(x) + \mu_0 \chi_{\mathbb{R}^2 \setminus \overline{B}_e}(x),$$
with $B_1, B_2 \subset B_e=B$. Let us define $k_0 = \omega$, $k_e = \omega \sqrt{\sigma_e \mu_e}$ and $k_i = \omega \sqrt{\sigma_i \mu_i}$ for $i=1,2$. The solution to \eqref{helm-pw-2} should satisfy
\begin{equation}
\begin{cases} \Delta u + k^2_0 u = 0 & \mbox{ in } \mathbb{R}^2 \setminus \overline{B}_e, \\ \Delta u + k^2_e u = 0 & \mbox{ in } B_e \setminus \overline{ {B}_1 \cup B_2}, \\ \Delta u + k^2_2 u = 0 & \mbox{ in } B_2, \\ \Delta u + k^2_1 u = 0 & \mbox{ in } B_1, \\ \left |\frac{\partial (u - U)}{\partial |x|}-i\omega(u-U) \right | \leq \frac{K}{|x|^{\frac{3}{2}}} & \mbox{ if } |x| \to \infty,
\end{cases}
\label{helm-pw-explicit-2}
\end{equation}
with the following transmission conditions
\begin{equation}
\begin{cases} u|_{+} = u|_{-} & \mbox{ on } \partial B_e, \\ u|_{+} = u|_{-} & \mbox{ on } \partial B_2, \\ u|_{+} = u|_{-} & \mbox{ on } \partial B_1, \\ \left. \frac{1}{\sigma_0} \frac{\partial u}{\partial \nu}\right|_{+} = \left. \frac{1}{\sigma_e} \frac{\partial u}{\partial \nu} \right|_{-} & \mbox{ on } \partial B_e, \\ \left. \frac{1}{\sigma_e} \frac{\partial u}{\partial \nu}\right|_{+} = \left. \frac{1}{\sigma_2} \frac{\partial u}{\partial \nu} \right|_{-} & \mbox{ on } \partial B_2, \\ \left. \frac{1}{\sigma_e} \frac{\partial u}{\partial \nu}\right|_{+} = \left. \frac{1}{\sigma_1} \frac{\partial u}{\partial \nu} \right|_{-} & \mbox{ on } \partial B_1.
\end{cases}
\label{transm-pw-2}
\end{equation}
As in the previous case, we look for a solution to \eqref{helm-pw-2} of the form 
\begin{equation}
u_n(x) = \begin{cases} \mathcal{C}_n(x)+S^{k_0}_{B_e}[\phi](x) & \mbox{ in } \mathbb{R}^2 \setminus \overline{B}_e, \\ S^{k_e}_{B_e}[\gamma](x) + S^{k_e}_{B_2}[\eta](x) + S^{k_e}_{B_1}[\zeta](x) & \mbox{ in } B_e \setminus \overline{{B}_1 \cup B_2}, \\ S^{k_2}_{B_2}[\psi](x) & \mbox{ in } B_2, \\ S^{k_1}_{B_1}[\xi](x) & \mbox{ in } B_1, 
\end{cases}
\label{numerical-sol-2}
\end{equation}
where the densities $\phi_n, \gamma_n, \eta_n, \zeta_n, \psi_n$ and $\xi_n$ are the solutions to
\begin{equation}
\begin{cases} S^{k_1}_{B_1}[\xi_n](x)= S^{k_e}_{B_e}[\gamma_n](x) + S^{k_e}_{B_2}[\eta_n](x) + S^{k_e}_{B_1}[\zeta_n](x) & \mbox{ on } \partial B_1, \\ S^{k_2}_{B_2}[\psi_n](x)= S^{k_e}_{B_e}[\gamma_n](x) + S^{k_e}_{B_2}[\eta_n](x) + S^{k_e}_{B_1}[\zeta_n](x) & \mbox{ on } \partial B_2, \\ S^{k_e}_{B_e}[\gamma_n](x) + S^{k_e}_{B_2}[\eta_n](x) + S^{k_e}_{B_1}[\zeta_n](x) = \mathcal{C}_n(x)+S^{k_0}_{B_e}[\phi_n](x) & \mbox{ on } \partial B_e, \\ \left. \frac{1}{\sigma_1}\frac{\partial  S^{k_1}_{B_1}[\psi_n]}{\partial \nu}\right|_{-}  = \left. \frac{1}{\sigma_e}\frac{\partial S^{k_e}_{B_e}[\gamma_n]}{\partial \nu}\right|_{+} + \left. \frac{1}{\sigma_e}\frac{\partial S^{k_e}_{B_2}[\eta_n]}{\partial \nu}\right|_{+} + \left. \frac{1}{\sigma_e}\frac{\partial S^{k_e}_{B_1}[\zeta_n]}{\partial \nu}\right|_{+} & \mbox{ on } \partial B_1, \\ \left. \frac{1}{\sigma_2}\frac{\partial  S^{k_2}_{B_2}[\xi_n]}{\partial \nu}\right|_{-}  = \left. \frac{1}{\sigma_e}\frac{\partial S^{k_e}_{B_e}[\gamma_n]}{\partial \nu}\right|_{+} + \left. \frac{1}{\sigma_e}\frac{\partial S^{k_e}_{B_2}[\eta_n]}{\partial \nu}\right|_{+} + \left. \frac{1}{\sigma_e}\frac{\partial S^{k_e}_{B_1}[\zeta_n]}{\partial \nu}\right|_{+} & \mbox{ on } \partial B_2, \\ \left. \frac{1}{\sigma_e}\frac{\partial S^{k_e}_{B_e}[\gamma_n]}{\partial \nu}\right|_{-} + \left. \frac{1}{\sigma_e}\frac{\partial S^{k_e}_{B_i}[\eta_n]}{\partial \nu}\right|_{-}  + \left. \frac{1}{\sigma_e}\frac{\partial S^{k_e}_{B_1}[\zeta_n]}{\partial \nu}\right|_{-} = \left. \frac{1}{\sigma_0}\frac{\partial \mathcal{C}_n}{\partial \nu}\right|_{+} + \left. \frac{1}{\sigma_0}\frac{\partial S^{k_0}_{B_e}[\phi_n]}{\partial \nu}\right|_{+} & \mbox{ on } \partial B_e.
\end{cases}
\label{transm-numerical-2}
\end{equation}

Again, the scattering coefficient of order $n,m$ associated to the target $B$ with (inhomogeneous) piecewise constant permittivity and permeability is given by
$$W_{n,m} [b, \sigma, \mu, \omega] =\int_{ \partial B_e } \overline{C_n(y)}\phi_m(y) dS_y.$$

\section{Target identification with $\sigma_0=0\%$} \label{secappendix2}

We present results of target identification obtained using the full-view setting of Figure \ref{fig:acq} with no noise ($\sigma_0=0\%$). The computation of the error $\epsilon(D,B_n)$ is represented by error bars in Figure $8$, where the $m$th error bar in the $n$th figure corresponds to the error $\epsilon(D,B_m)$ of the matching experiment using the generating element of the dictionary $B_n$. The shortest bar in each group is the identified target and is marked in green, while the true target is marked in red where the identification fails.

\begin{figure}[H]
	\begin{tabular}{cc}
		\includegraphics[width=75mm]{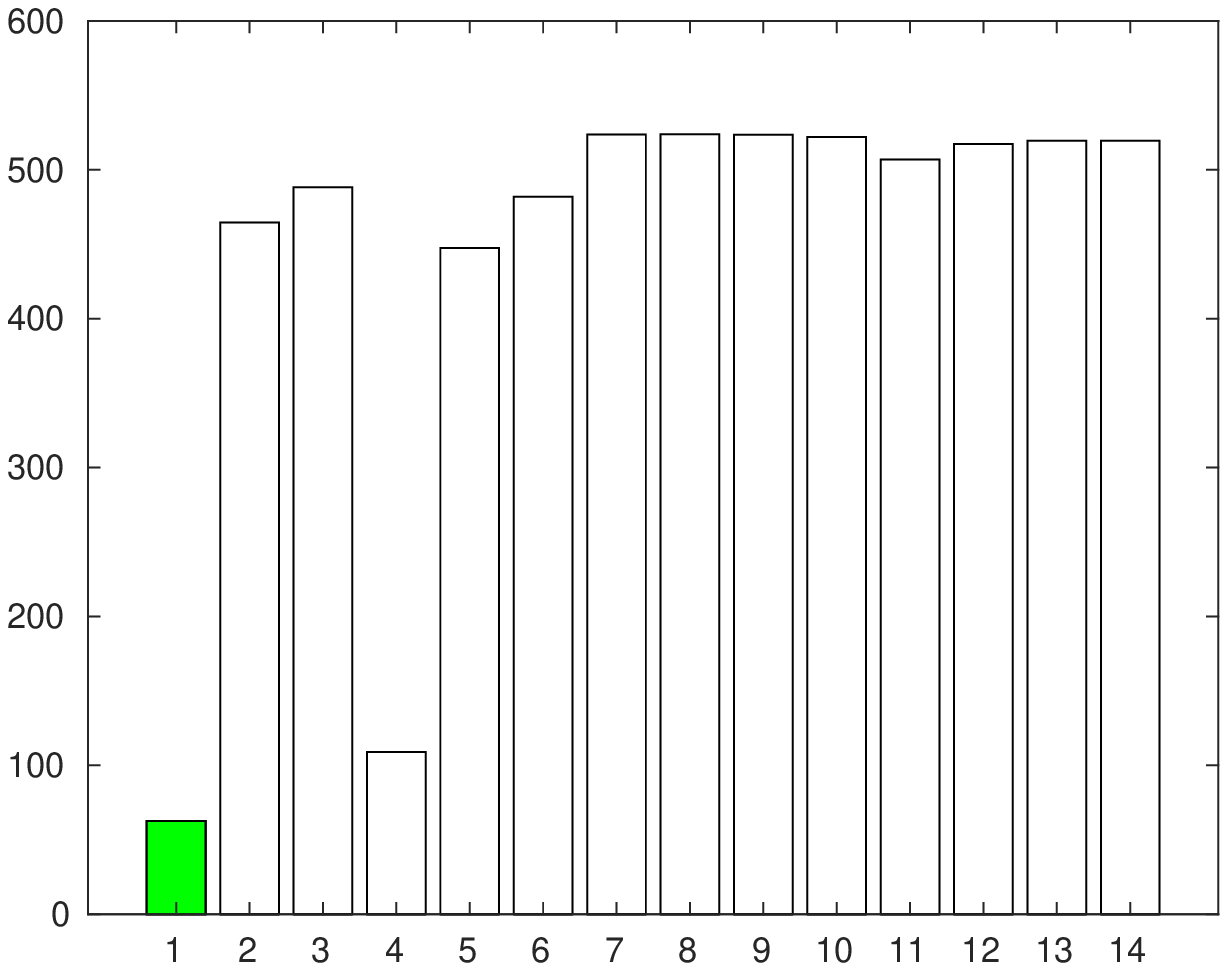} &   \includegraphics[width=75mm]{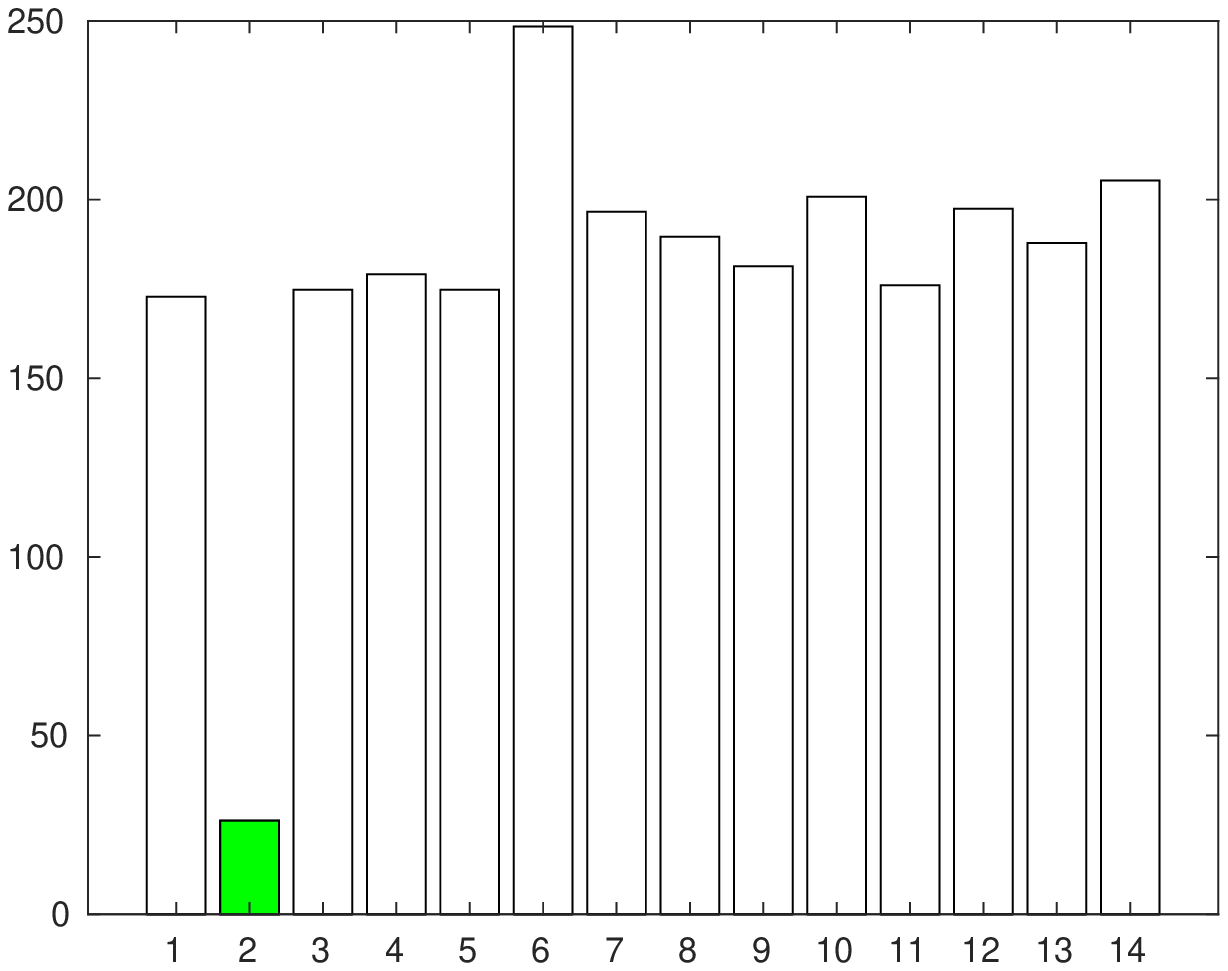} \\
		(1) Disk & (2) Ellipse \\
	\end{tabular}
\end{figure}

\vspace{5mm}

\begin{figure}[H]
	\begin{tabular}{cc}
		\includegraphics[width=75mm]{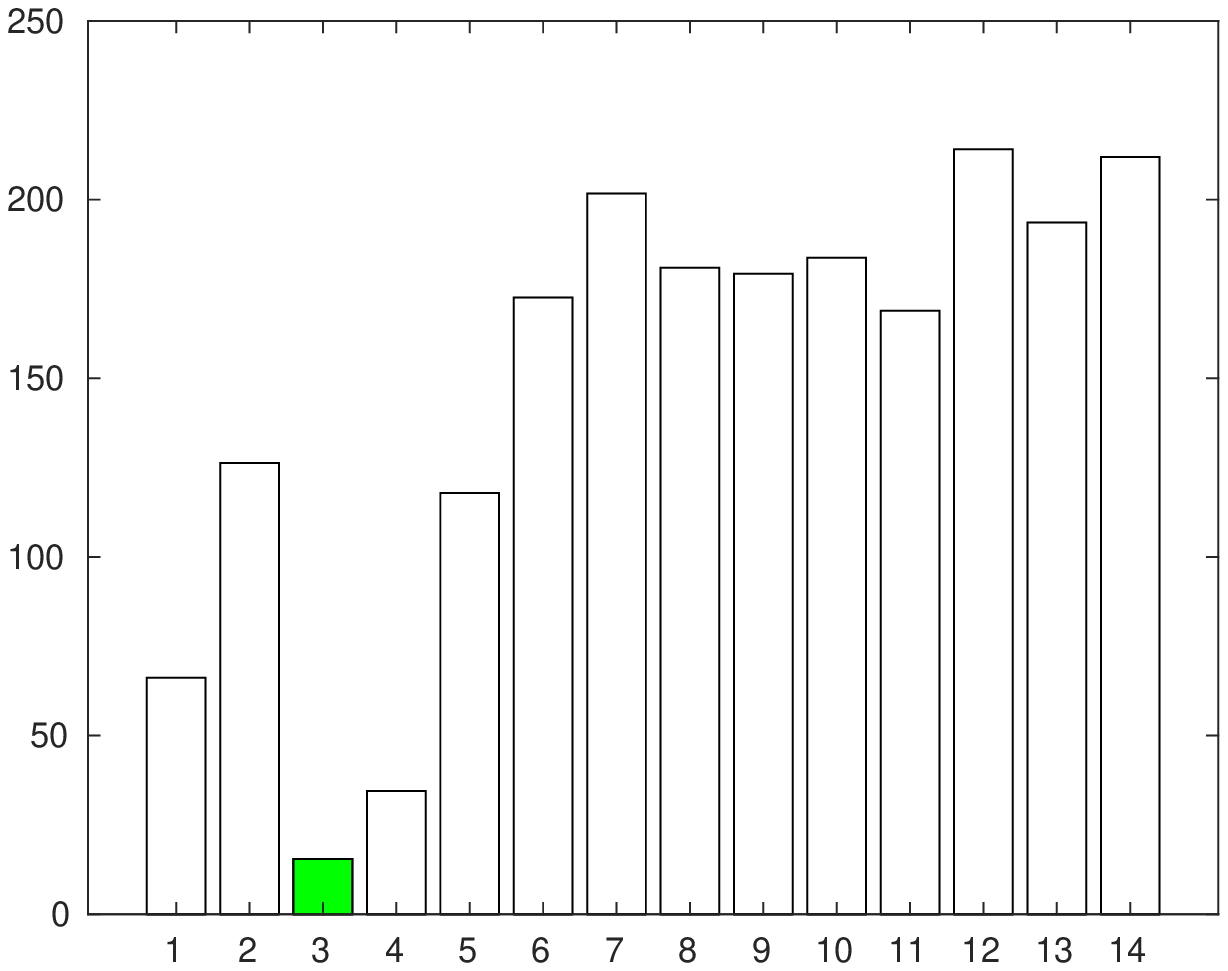} &   \includegraphics[width=75mm]{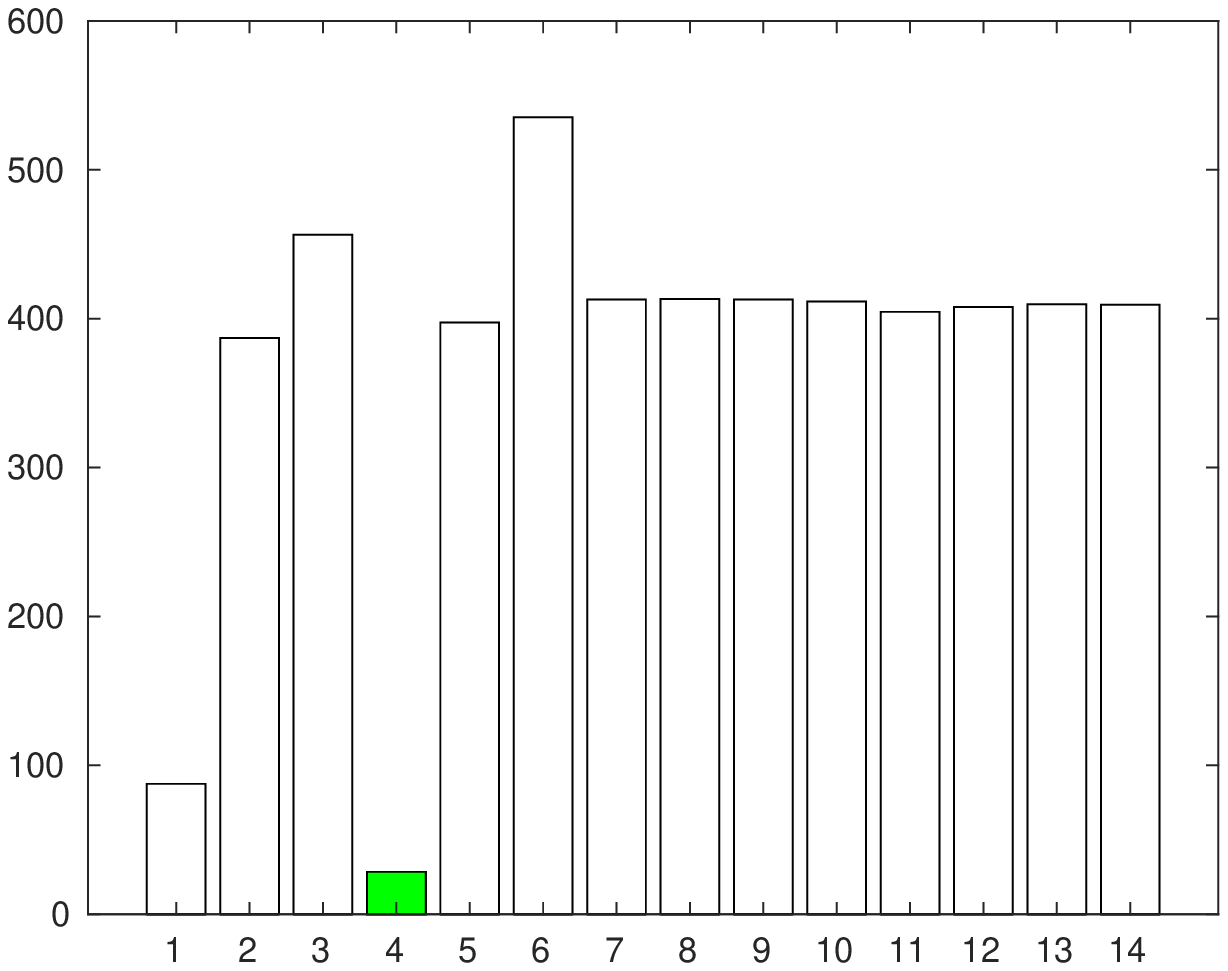} \\
		(3) Triangle & (4) Square \\[4pt]
		\includegraphics[width=75mm]{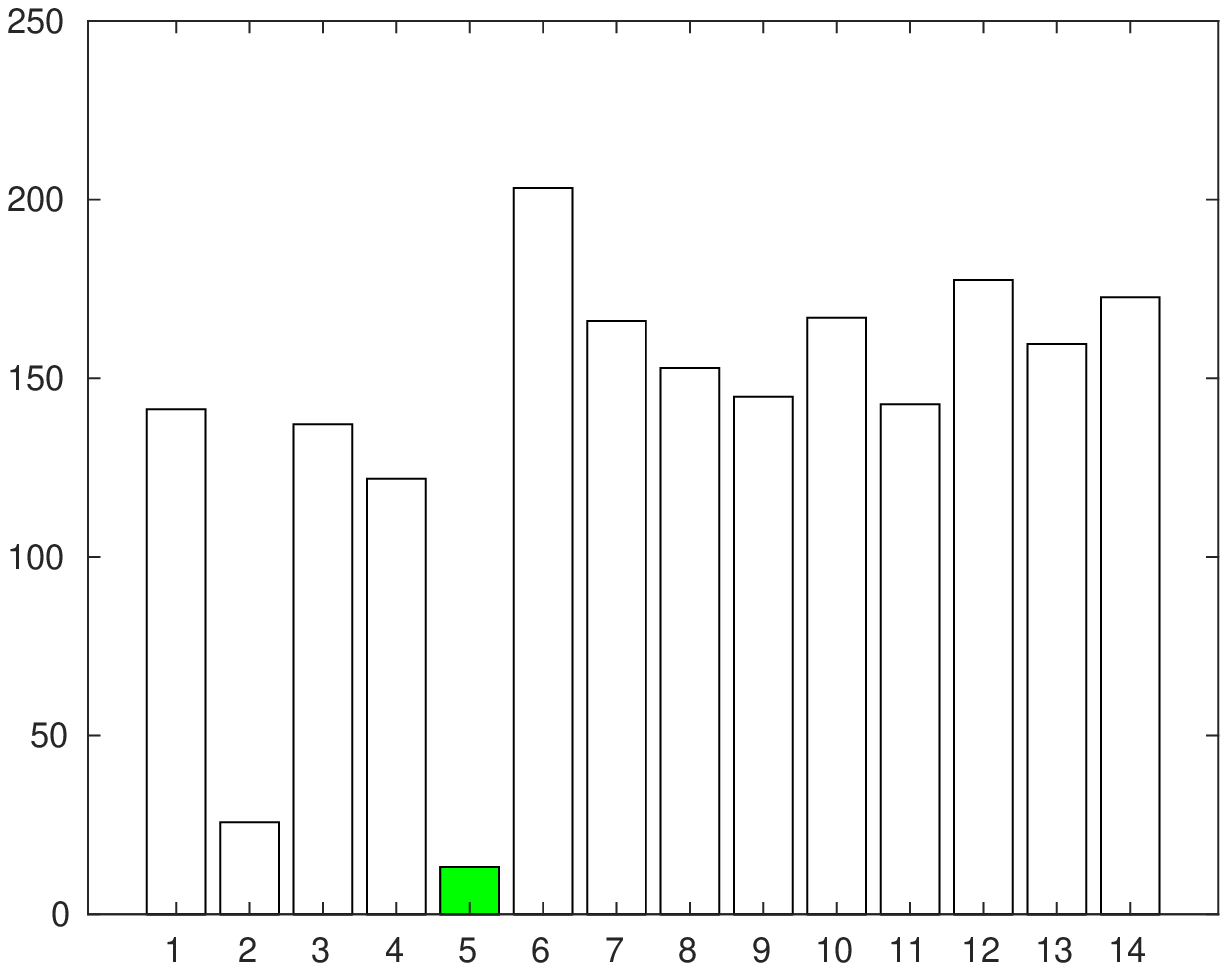} &   \includegraphics[width=75mm]{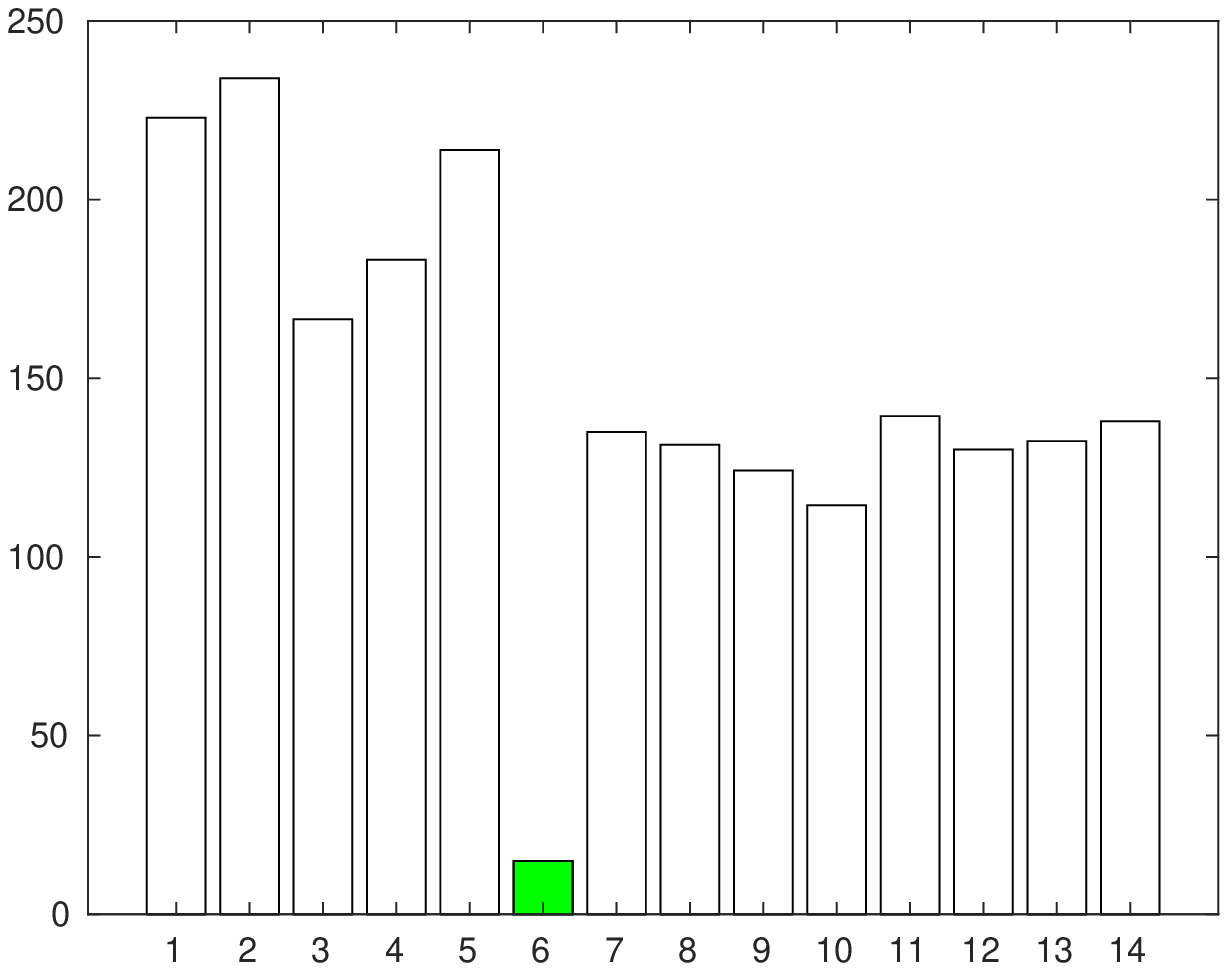} \\
		(5) Rectangle & (6) Letter A \\[4pt]
		\includegraphics[width=75mm]{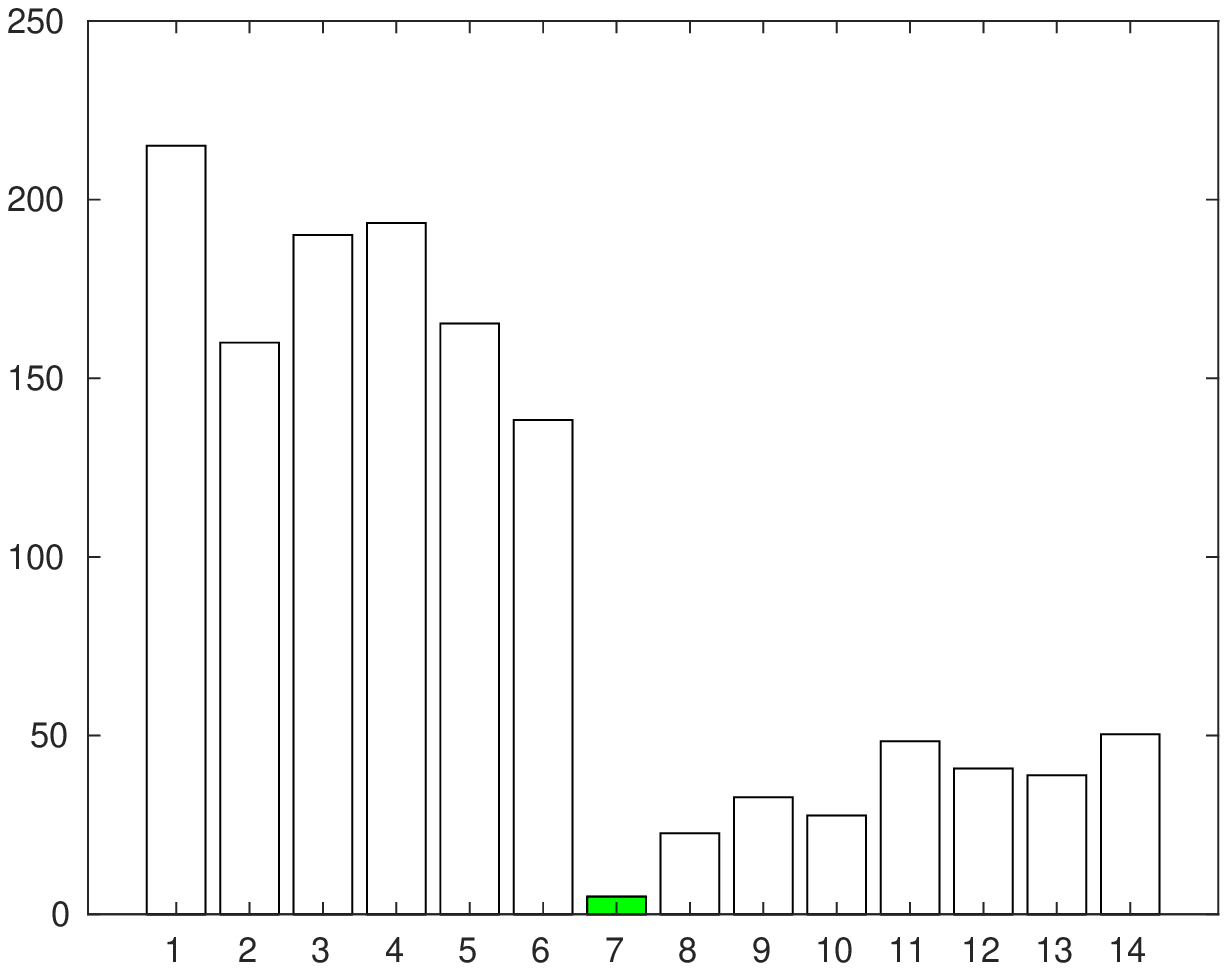} &   \includegraphics[width=75mm]{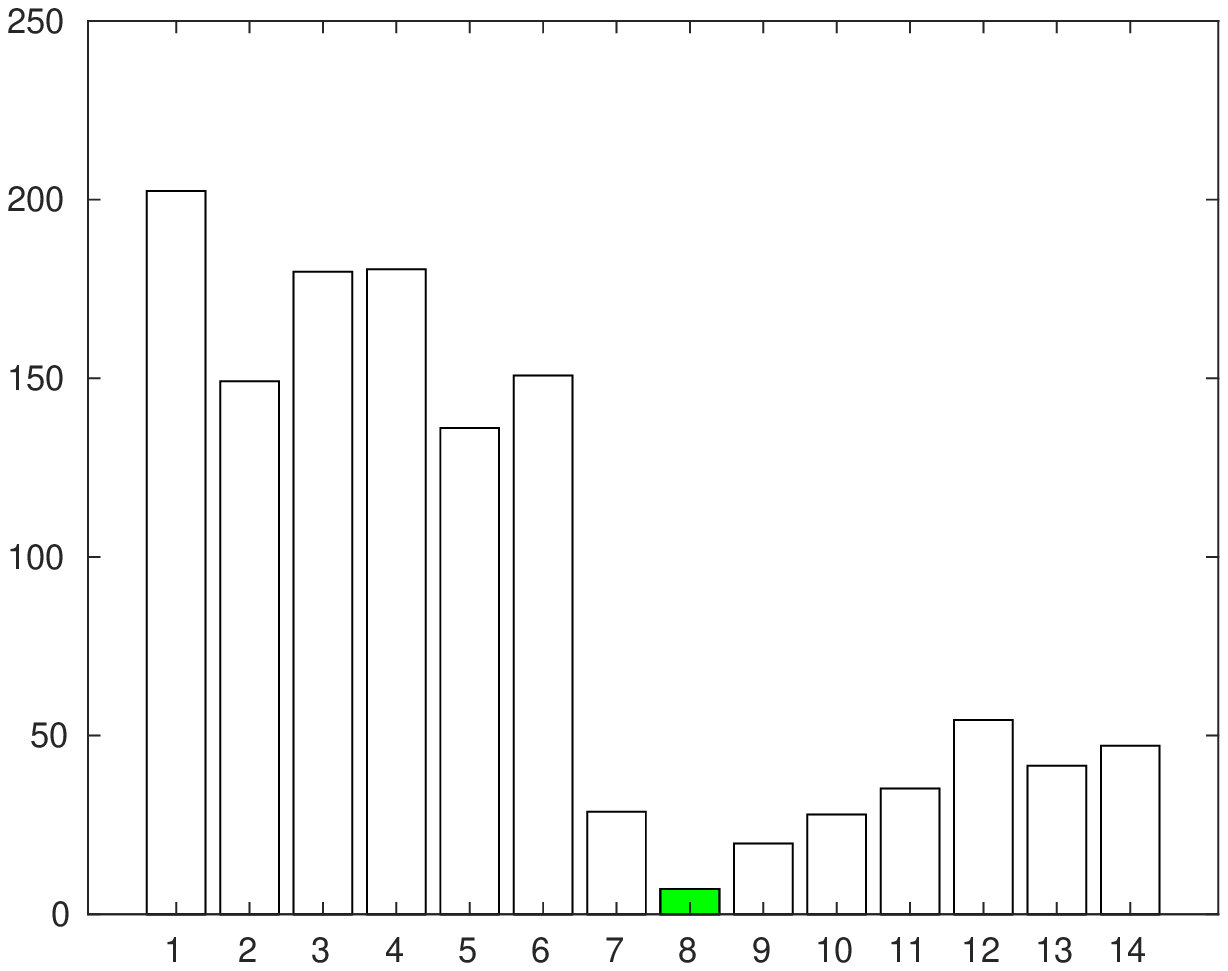} \\
		(7) Disk with a circular inclusion & (8) Ellipse inside a disk
	\end{tabular}
\end{figure}

\begin{figure}[H]
	\begin{tabular}{cc}
		\includegraphics[width=75mm]{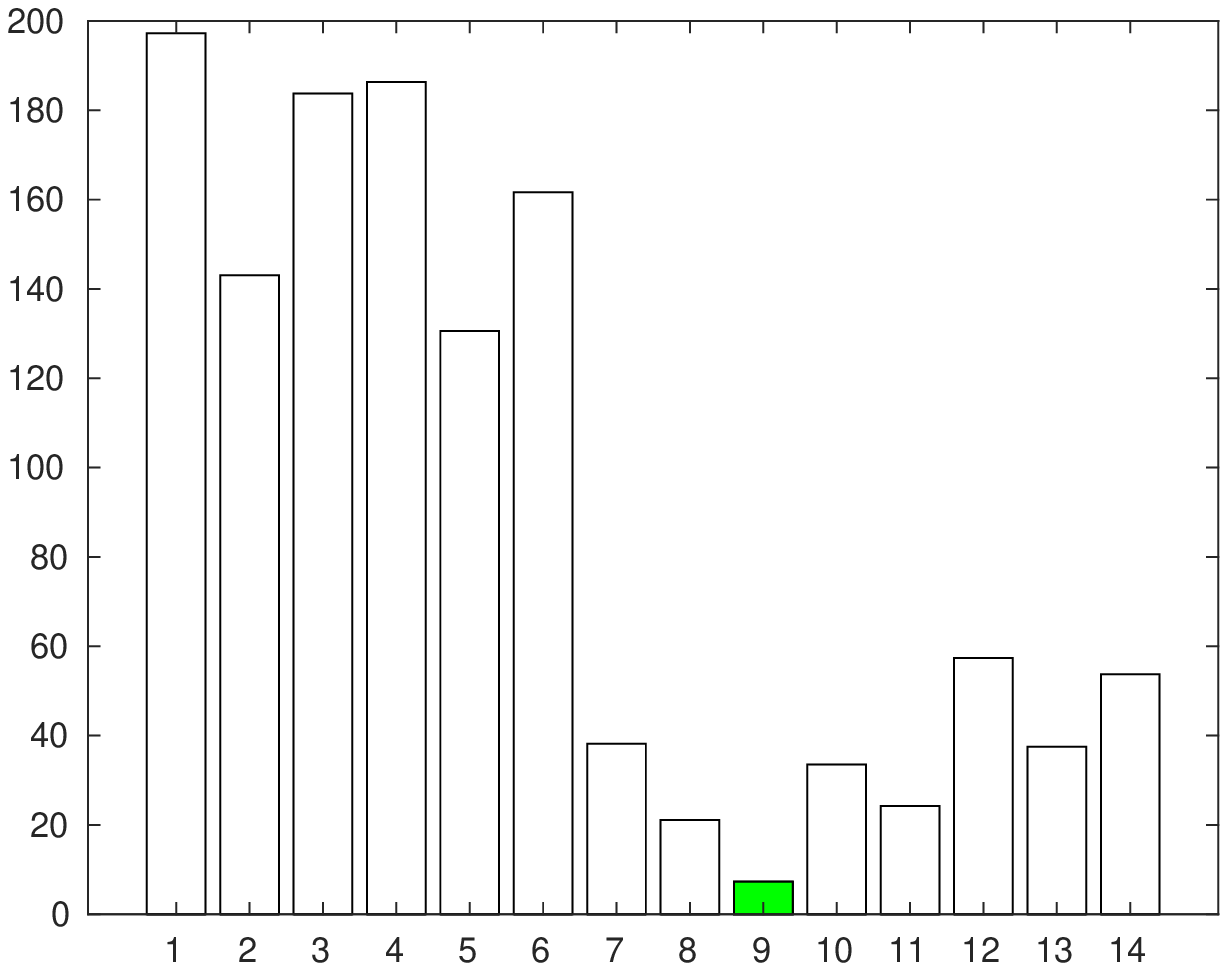} &   \includegraphics[width=75mm]{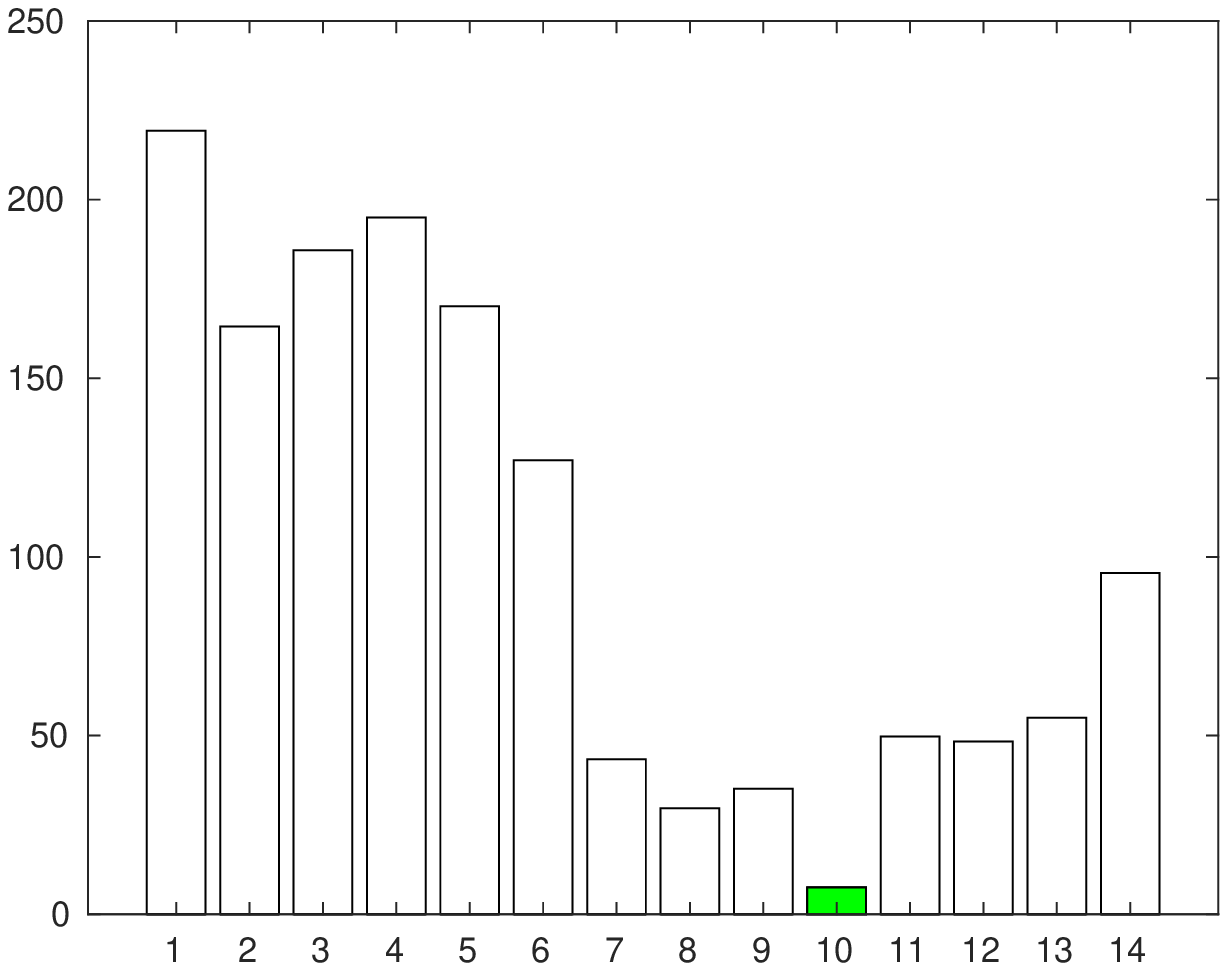} \\
		(9) Triangle inside a disk & (10) Square inside a disk \\[4pt]
		\includegraphics[width=75mm]{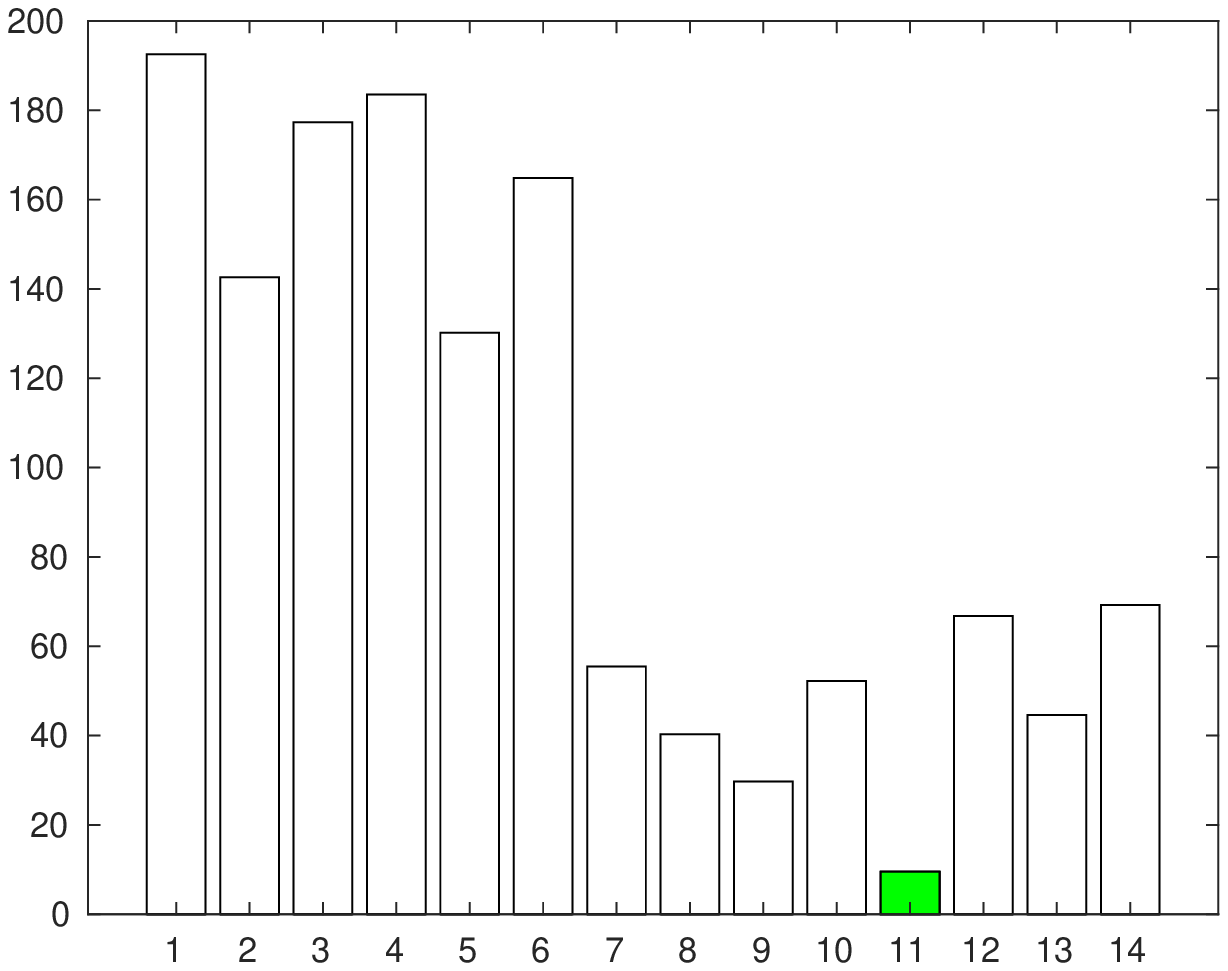} &   \includegraphics[width=75mm]{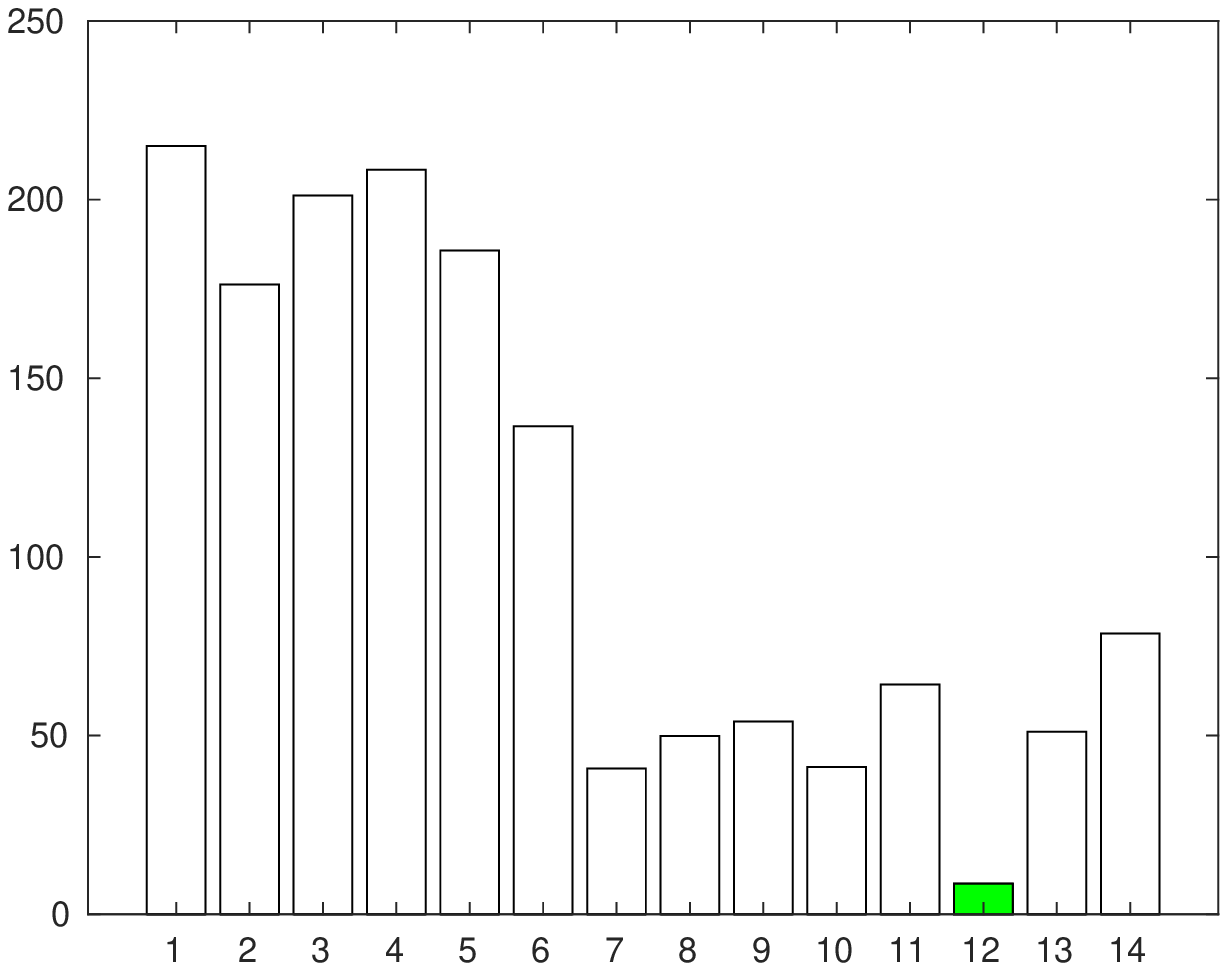} \\
		(11) Rectangle inside a disk & (12) Disk with two circular inclusions \\[4pt]
		\includegraphics[width=75mm]{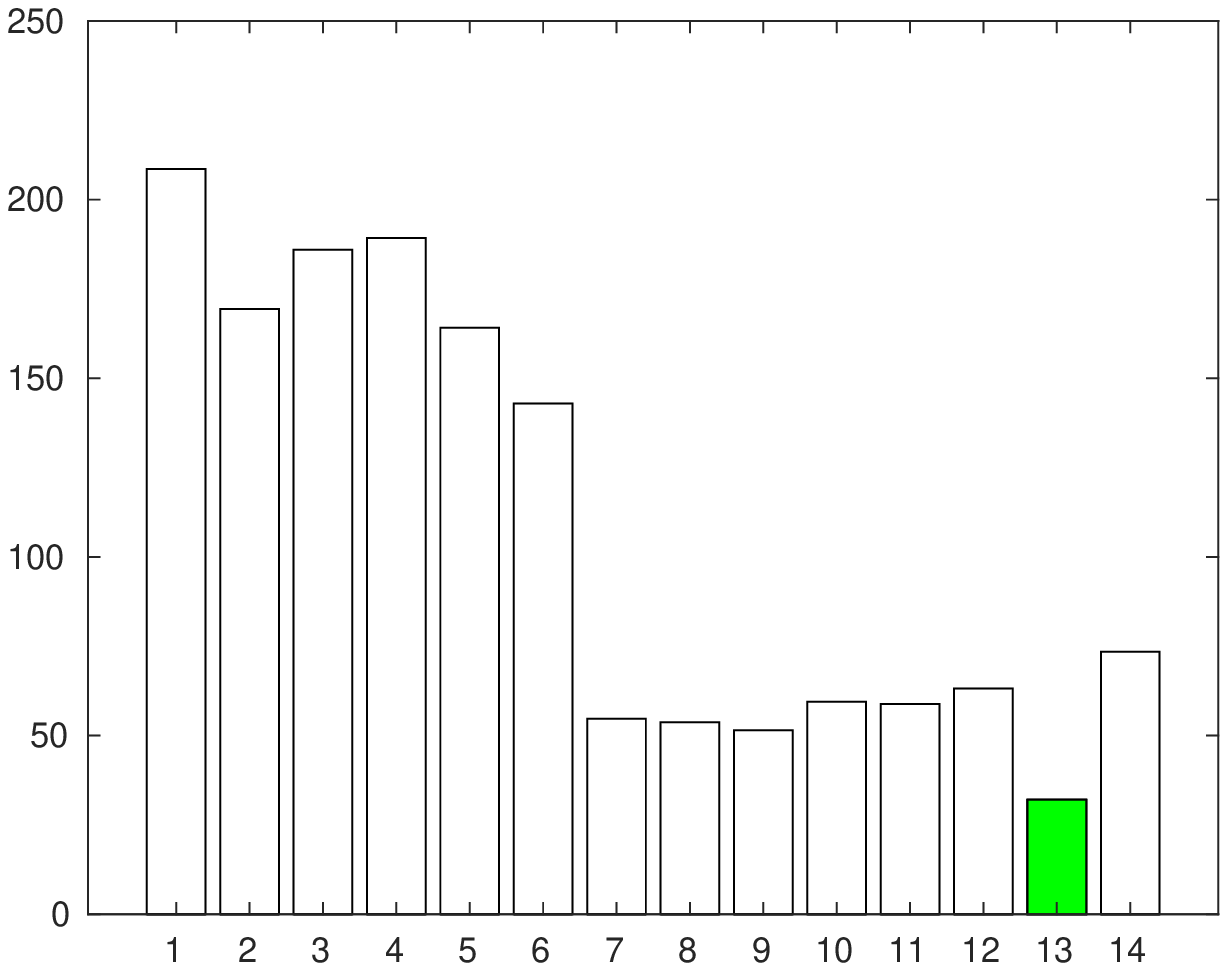} &   \includegraphics[width=75mm]{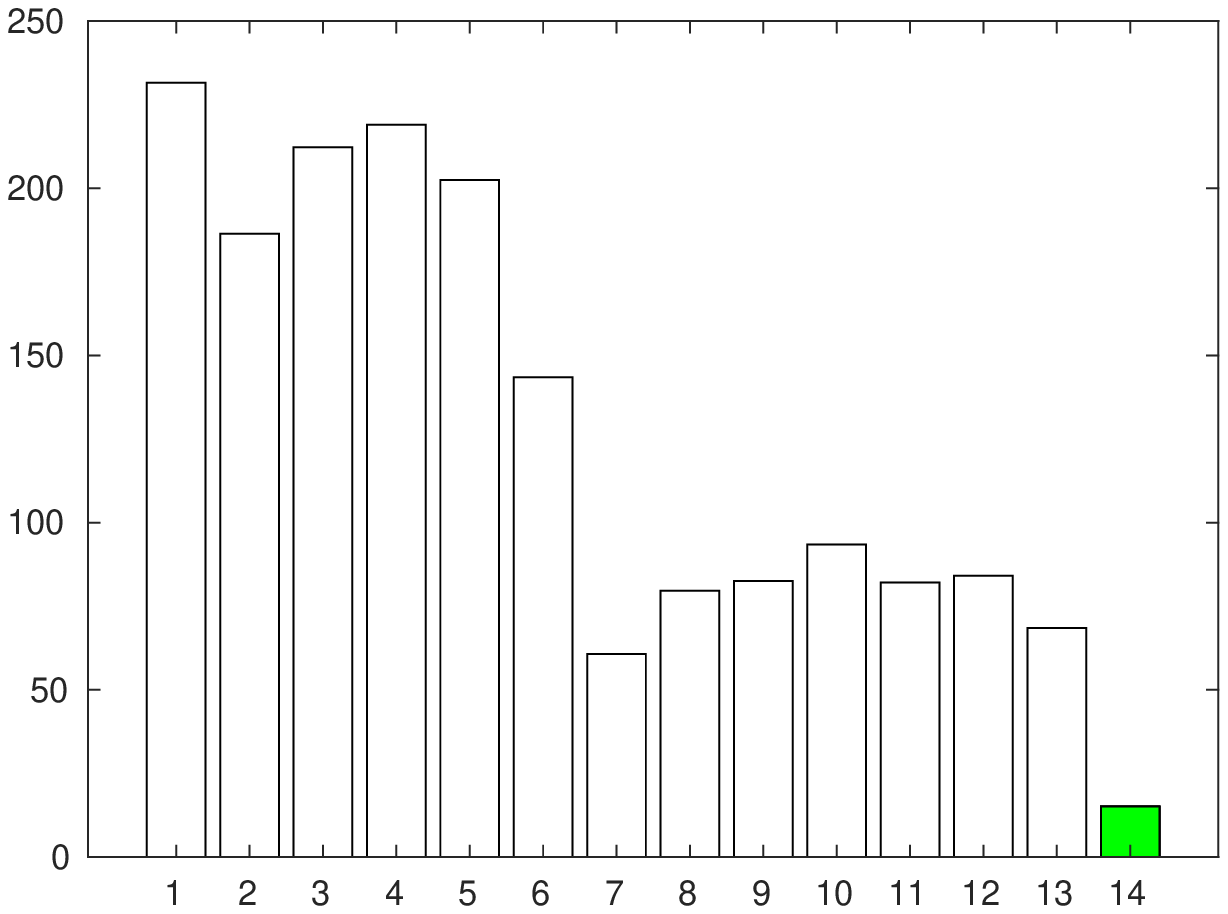} \\
		(13) Disk with a disk and an ellipse inside & (l4) Disk with two ellipses inside
	\end{tabular}
	\label{fig:bar}
	\caption{\textit{Results of identification for all elements of the dictionary in the full-view and no noise.}} 
\end{figure}

\section*{Acknowledgments}
The author gratefully acknowledges Prof. H. Ammari for his guidance. During the preparation of this work, the author was financially supported by a Swiss National Science Foundation grant (number 200021-172483).

\end{document}